\theoremstyle{plain} 
\newtheorem{theorem}{Theorem}[section]
\newtheorem{proposition}[theorem]{Proposition}
\newtheorem{corollary}[theorem]{Corollary}
\newtheorem{lemma}[theorem]{Lemma}
\newtheorem{definition}[theorem]{Definition}
\theoremstyle{remark}
\newtheorem{remark}[theorem]{Remark}
\newcommand{\F}{\mathbb{F}}
\newcommand{\R}{\mathbb{R}}
\newcommand{\N}{\mathbb{N}}
\newcommand{\eps}{\varepsilon}
\newcommand{\de}{\partial}
\newcommand{\graph}{{\rm graph}}
\newcommand{\cH}{\mathcal{H}}
\newcommand{\cP}{\mathcal{P}}
\newcommand{\cA}{\mathcal{A}}
\newcommand{\cB}{\mathcal{B}}
\newcommand{\cC}{\mathcal{C}}
\newcommand{\cM}{\mathcal{M}}
\newcommand{\cN}{\mathcal{N}}
\newcommand{\cL}{\mathcal{L}}
\newcommand{\cQ}{\mathcal{Q}}
\newcommand{\cV}{\mathcal{V}}
\newcommand{\cI}{\mathcal{I}}
\newcommand{\bM}{\mathbf{M}}
\def\XXint#1#2#3{{\setbox0=\hbox{$#1{#2#3}{\int}$}
     \vcenter{\hbox{$#2#3$}}\kern-.5\wd0}}
\numberwithin{equation}{section}
\title[Quantitative isoperimetry on manifolds]{The Riemannian quantitative isoperimetric inequality}
\author{Otis Chodosh}
\address{OC: Department of Mathematics, Princeton University, Fine Hall, Washington Road, Princeton,
NJ, 08544, USA}
\address{School of Mathematics, Institute for Advanced Study, Princeton, NJ 08540, USA}
\email{ochodosh@math.princeton.edu}
\author{Max Engelstein}
\address{ME: Department of Mathematics, Massachusetts Institute of Technology, Cambridge, MA,
02139, USA.}
\email{maxe@mit.edu}
\author{Luca Spolaor}
\address{LS: Massachusetts Institute of Technology (MIT), 77 Massachusetts Avenue, Cambridge MA 02139, USA}
\email{lspolaor@mit.edu}
\begin{document}

\begin{abstract}

We study the Riemannian quantiative isoperimetric inequality. We show that direct analogue of the Euclidean quantitative isoperimetric inequality is---in general---false on a closed Riemannian manifold. In spite of this, we show that the inequality is true generically. Moreover, we show that a modified (but sharp) version of the quantitative isoperimetric inequality holds for a real analytic metric, using the \L ojasiewicz--Simon inequality. A main novelty of our work is that in all our results we do not require any a priori knowledge on the structure/shape of the minimizers.
\end{abstract}
\maketitle

\section{Introduction}

The isoperimetric inequality on $\R^{n}$ states that $\cP(\Omega) \geq \cP(B)$ for any Caccioppoli set $\Omega$ with $|\Omega| = |B|$ with equality only for $\Omega=B$ (up to a set of measure zero). That is, the isoperimetric inequality states that balls in Euclidean space have the least perimeter for their enclosed volume. Starting with Bonnesen (cf.\ \cite{Oss:iso}), there has been considerable activity concerning quantitative versions of the isoperimetric inequality, finding geometric conditions on a set $\Omega$ that nearly achieves equality in the isoperimetric inequality. 

Recently, a (Euclidean) quantitative isoperimetric inequality holding in all dimensions has been established by Fusco, Maggi, and Pratelli, \cite{FuMaPr}. They proved that if $\Omega$ is a Caccioppoli set with $|\Omega| = |B_1(0)|$, then 
\begin{equation}\label{eq:quant-iso-Rn}
\left(\inf_{B = B_{1}(x) \subset \R^{n}} |\Omega\Delta B| \right)^{2} \leq C(n) (\cP(\Omega) - \cP(B)).
\end{equation}
By considering $C^{2}$-perturbations of the ball, the exponent on the left hand side is seen to be sharp, see \cite{Hall}. Subsequently, Figalli, Maggi, and Pratelli, \cite{FiMaPr}, and Cicalese and Leonardi, \cite{CiLe}, gave substantially different proofs of \eqref{eq:quant-iso-Rn}. See also \cite{CiLe:const,FuJu}, and \cite{CaFi,AcMoFu} for applications of such quantitative inequalities. 

In this work, we consider the analogue of \eqref{eq:quant-iso-Rn} on a closed Riemannian manifold. The symmetrization and optimal transport techniques of \cite{FuMaPr,FiMaPr} are not applicable even for non-quantitative isoperimetric inequalities on a general Riemannian manifold (see Section \ref{s:previous} below for more discussion), so we follow the selection principle approach of \cite{CiLe}. The general idea of the selection principle (which has roots in the work of White, \cite{Wh2}) is that by considering a ``worst case scenario'' for \eqref{eq:quant-iso-Rn}, one can reduce to the case where $\partial\Omega$ is a small $C^{1,\alpha}$ graph over $\partial B$. At this point work of Fuglede, \cite{Fug}, applies (in $\R^{n}$) to show that \eqref{eq:quant-iso-Rn} holds in the worst case scenario (and thus in all situations). 

In a closed Riemannian manifold $(M,g)$, it is well known that isoperimetric regions exist for all volumes $V \in (0,|M|_{g})$. However, there are surprisingly few manifolds where the explicit isoperimetric regions are known (see \cite[Appendix H]{EM:all-dim} for a recent survey). As such, the methods used in \cite{Fug} that rely explicitly on the geometry of $B \subset \R^{n}$ cannot be directly extended to a general manifold. Indeed, an estimate of the form \eqref{eq:quant-iso-Rn} is \emph{false} in a general Riemannian manifold, even for sets which are small graphs over isoperimetric regions! (We note, however, that the quantitative isoperimetric inequality in the form \eqref{eq:quant-iso-Rn} for regions in space-forms does hold \cite{BoDuzSch,BoVeDuzFus}).

Indeed, we construct the following example in \cref{ss:example}:
\begin{theorem}\label{thm:example}
For all $n\geq 2$ there exists a closed manifold $M^{n}$ with a real analytic Riemannian metric $g$, a uniquely isoperimetric region $\Omega\subset M$ and sets with smooth boundary, $E_{k}$, so that $|\Omega\Delta E_{k}|_{g}\to 0$ but the sets $E_{k}$ do not satisfy the analogue of \eqref{eq:quant-iso-Rn}, i.e.
\[
\frac{(|\Omega\Delta E_{k}|_{g})^{2}}{\cP^{g}(E_{k}) - \cP^{g}(\Omega)} \to \infty. 
\]
In fact, for any $\gamma>0$ fixed, there exists a real analytic $g$, depending on $\gamma$, so that
\[
\frac{(|\Omega\Delta E_{k}|_{g})^{2+\gamma}}{\cP^{g}(E_{k}) - \cP^{g}(\Omega)} \to \infty. 
\]
Finally, we show that there is a $g$ smooth but not real analytic on $M^{n}$ so that one cannot bound $\cP^{g}(E_{k})-\cP^{g}(E)$ by any power of $|\Omega\Delta E_{k}|_{g}$, i.e.,
\[
\frac{(|\Omega\Delta E_{k}|_{g})^{2+\gamma}}{\cP^{g}(E_{k}) - \cP^{g}(\Omega)} \to \infty, \quad \text{for all}\;\; \gamma>0. 
\]

\end{theorem}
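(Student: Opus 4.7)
The plan is to construct explicit warped-product counterexamples whose perimeter is extremely flat along a natural one-parameter family of ``shifts'' of the isoperimetric region. Work on $M^{n} = S^{1}_{x} \times S^{n-1}$ with metric $g = dx^{2} + h(x)^{2} g_{S^{n-1}}$, where $h\colon S^{1}\to (0,\infty)$ is a smooth positive function to be chosen. Horizontal strips $\Omega_{[a,b]} := \{a\leq x\leq b\}$ satisfy $|\Omega_{[a,b]}|_{g} = \omega_{n-1}\int_{a}^{b}h^{n-1}\,dx$ and $\cP^{g}(\Omega_{[a,b]}) = \omega_{n-1}(h(a)^{n-1}+h(b)^{n-1})$, where $\omega_{n-1} := |S^{n-1}|$. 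The design principle is to pick $h$ with two local minima $a_{0} < b_{0}$ at which $h$ is flat of order $2k$: $h^{(j)}(a_{0}) = h^{(j)}(b_{0}) = 0$ for $1 \leq j \leq 2k-1$ and $h^{(2k)}(a_{0}), h^{(2k)}(b_{0}) > 0$. A real analytic candidate is
\[
h(x) = 1 + \bigl(1-\cos(x-a_{0})\bigr)^{k}\bigl(1-\cos(x-b_{0})\bigr)^{k} + \epsilon f(x),
\]
for a small asymmetric real analytic perturbation $\epsilon f$ that preserves the flatness to leading order.

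Set $\Omega := \Omega_{[a_{0},b_{0}]}$. Both boundary components are totally geodesic $S^{n-1}$-slices (because $h'(a_{0})=h'(b_{0})=0$), so $\Omega$ is a critical point of perimeter under volume-preserving variations. To show that $\Omega$ is the \emph{unique} isoperimetric region of its volume $V_{0}$, one applies a Schwarz-type symmetrization in the warped product (in the spirit of Morgan--Ritor\'e) to reduce any competitor to an $S^{n-1}$-invariant set of no greater perimeter; the remaining one-dimensional problem of minimizing $h(a)^{n-1}+h(b)^{n-1}$ subject to $\int_{a}^{b}h^{n-1} = V_{0}/\omega_{n-1}$ is solved uniquely by $(a_{0},b_{0})$, provided $h$ has been chosen asymmetrically enough to eliminate competing critical pairs and the scale of $M$ is chosen so that small geodesic balls do not compete at volume $V_{0}$.

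Next define $E_{k} := \Omega_{[a_{0}+s_{k},\, b_{0}+t_{k}]}$ with $s_{k}\to 0^{+}$, and choose $t_{k}$ so that $|E_{k}|_{g} = |\Omega|_{g}$. The volume identity yields $t_{k} = (h(a_{0})/h(b_{0}))^{n-1}s_{k} + O(s_{k}^{2k+1})$. A direct Taylor expansion using $h(a_{0}+s) = h(a_{0}) + O(s^{2k})$ (and the analogous expansion at $b_{0}$) gives
\[
|E_{k}\Delta\Omega|_{g} \sim s_{k}, \qquad \cP^{g}(E_{k}) - \cP^{g}(\Omega) \sim s_{k}^{2k},
\]
and hence
\[
\frac{(|E_{k}\Delta\Omega|_{g})^{2+\gamma}}{\cP^{g}(E_{k})-\cP^{g}(\Omega)} \sim s_{k}^{2+\gamma-2k} \to \infty \quad \text{whenever } 2k > 2+\gamma.
\]
The first claim follows by taking $k=2$ and $\gamma=0$; for any prescribed $\gamma>0$, pick $k>1+\gamma/2$ to obtain the second. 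For the third claim, replace $h$ in a neighborhood of $a_{0}, b_{0}$ with a smooth but nonanalytic function flat to infinite order (e.g.\ $h(a_{0}) + e^{-1/(x-a_{0})^{2}}$, smoothly cut off and extended periodically). Then $\cP^{g}(E_{k})-\cP^{g}(\Omega) = o(s_{k}^{N})$ for every $N$, while $|E_{k}\Delta\Omega|_{g} \sim s_{k}$, so no polynomial lower bound is possible.

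The main obstacle is establishing the global uniqueness of $\Omega$: the warped-product symmetrization reduces matters to a one-dimensional variational problem, but one must design $h$ asymmetrically to rule out reflection symmetries (which would otherwise yield a second, equal-perimeter isoperimetric strip) and to avoid accidental competing critical pairs with the same enclosed volume, and one must control the overall scale of $M$ so that non-strip competitors such as small geodesic balls do not undercut $\Omega$ at volume $V_{0}$. Once uniqueness is secured, the Taylor expansions of perimeter and symmetric difference are routine given the flatness of $h$.
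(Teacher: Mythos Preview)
Your construction is exactly the paper's: warped products $g=dr^{2}+\varphi(r)^{2}g_{S^{n-1}}$ on $S^{1}(R)\times S^{n-1}$, with the warping function flat to high (resp.\ infinite) order at its minima, and volume-preserving shifts of a slab as the bad sequence $E_{k}$. Your Taylor computations for $|E_{k}\Delta\Omega|$ and $\cP(E_{k})-\cP(\Omega)$ are correct and match the paper's.

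The one substantive gap is your argument for global uniqueness of the isoperimetric slab. You invoke a ``Schwarz-type symmetrization'' to reduce an arbitrary competitor to an $S^{n-1}$-invariant set, i.e.\ a union of full fibers $I\times S^{n-1}$. But Schwarz symmetrization with respect to a pole in $S^{n-1}$ only yields $SO(n-1)$-invariant competitors (geodesic caps in each fiber), not $SO(n)$-invariant ones; it does not produce slabs. Indeed no such unconditional reduction can exist, since for small volumes geodesic balls beat slabs even in the round product. So as written this step does not go through, and you yourself flag it as ``the main obstacle.''

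The paper handles uniqueness differently and more concretely. It first proves, by a compactness argument (uniform mean-curvature bound via the monotonicity formula, plus $\mathrm{Ric}\geq 0$ forcing $\partial\Omega$ connected unless it is already a pair of slices), that in the \emph{round} product $S^{1}(R)\times S^{n-1}$ with $R$ large the half-volume isoperimetric regions are exactly the slabs $(t_{0},t_{0}+\pi R)\times S^{n-1}$. Then it chooses $\varphi$ as a $C^{\infty}$-small perturbation of the constant $1$ with a single minimum and $\varphi\equiv 1$ outside a tiny interval; a calibration using the strict monotonicity of $\varphi$ on either side of the minimum pins down the unique isoperimetric slab. This single-minimum design also simplifies the arithmetic: the second boundary component sits in the region where $\varphi\equiv 1$ and contributes nothing to the perimeter defect, so only the order of vanishing of $\varphi-\varphi(1)$ at the one minimum enters. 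For the analytic statement the paper takes $\varphi$ to be $\pi R$-periodic (hence with minima at $1$ and $1+\pi R$) and $\varphi(1+x)=\varphi(1)+x^{2m}$ near the minimum; this is the analytic analogue of your two-minima picture, but again uniqueness rides on the product-metric lemma plus perturbation rather than on a symmetrization.
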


As such, the natural analogue of \eqref{eq:quant-iso-Rn} cannot hold in general. Nevertheless, we prove that \eqref{eq:quant-iso-Rn} holds generically in the following sense. Let $\Gamma$ denote the the set of $C^{3}$-metrics on a given Riemannian manifold. 
\begin{theorem}\label{thm:generic-isop}
Let $M^n$ be a closed manifold, $2\leq n\leq 7$. There exists an open and dense subset $\mathcal G\subset \Gamma$ with the following property. If $g\in \mathcal G$, then there exists an open dense subset $\cV \subset (0,|M|_{g})$ so that for $V_{0} \in \cV$, there is $C=C(g,V_{0}) > 0$ so that 
\begin{equation}\label{eq:generic-quant-iso}
\cP^{g}(E) - \cI^{g}(V_{0}) \geq C \alpha_{g}(E)^{2}
\end{equation}
for any $E\subset M$ with $|E|_{g} = V_{0}$. Here,
\[
\cI_{g}(V_{0}) : = \inf\{ \cP^{g}(\Sigma) : |\Sigma|_{g} = V_{0}\}
\]
is the isoperimetric profile and the manifold Fraenkel asymmetry is
\[
\alpha_{g}(E) : = \inf\{|E \Delta \Sigma|_{g} : \Sigma \in \cM^{g}_{V_{0}}\},
\]
for $\cM^{g}_{V_{0}}$ the set of $\Sigma$ attaining the infimum in $\cI_{g}(V_{0})$. 
\end{theorem}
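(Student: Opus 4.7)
The plan is to combine the Cicalese--Leonardi selection principle with a generic nondegeneracy result in the spirit of White's bumpy metrics theorem, adapted to constant mean curvature (CMC) hypersurfaces, since isoperimetric boundaries are CMC. Define $\cG \subset \Gamma$ to consist of those $g$ for which every smooth closed CMC hypersurface $\Sigma \subset (M,g)$ is \emph{nondegenerate}, in the sense that the constrained Jacobi operator $J_\Sigma u = -\Delta_\Sigma u - (|A|^2 + \mathrm{Ric}(\nu,\nu))u$ has trivial kernel on $\{u : \int_\Sigma u = 0\}$. Density of $\cG$ in $\Gamma$ follows from a perturbation of the CMC equation with the metric and Lagrange multiplier varying simultaneously; openness follows from elliptic stability. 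Given $g \in \cG$, let $\cV$ be the set of volumes $V_0 \in (0,|M|_g)$ for which $\cM^g_{V_0}$ is finite (each smooth minimizer is isolated by nondegeneracy, and for $n \leq 7$ all minimizers are smooth by De Giorgi regularity) and no ``exchange'' between competing minimizers occurs; openness comes from continuity of Jacobi eigenvalues via the implicit function theorem, density from the analytic dependence of CMC families on the Lagrange multiplier.

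Fix $g \in \cG$ and $V_0 \in \cV$, and argue by contradiction: if \eqref{eq:generic-quant-iso} fails, there exist $E_k$ with $|E_k|_g = V_0$ and $\cP^g(E_k) - \cI^g(V_0) < k^{-1}\alpha_g(E_k)^2$, forcing $\cP^g(E_k) \to \cI^g(V_0)$. By compactness and finiteness of $\cM^g_{V_0}$, along a subsequence $E_k \to \Omega \in \cM^g_{V_0}$ in $L^1$, and $\alpha_g(E_k) = |E_k \Delta \Omega|_g \to 0$. Following Cicalese--Leonardi, replace $E_k$ by a minimizer $\tilde E_k$ of
\[
\cP^g(E) + \Lambda_k \, \big| |E \Delta \Omega|_g - \alpha_g(E_k) \big|
\]
subject to $|E|_g = V_0$, for $\Lambda_k$ chosen so that the bad inequality transfers (with worsened constant) to $\tilde E_k$. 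Since $\tilde E_k$ is a volume-constrained almost-minimizer of perimeter, Tamanini--Almgren regularity and elliptic CMC regularity (using $n \leq 7$) give $\partial \tilde E_k \to \partial \Omega$ in $C^{2,\alpha}$, where finiteness of $\cM^g_{V_0}$ is crucially used to prevent escape to a different minimizer.

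Write $\partial \tilde E_k$ as a normal exponential graph $u_k$ over $\partial \Omega$ with $\|u_k\|_{C^{2,\alpha}} \to 0$. The volume constraint yields $\int_{\partial \Omega} u_k = O(\|u_k\|_{L^2}^2)$, and a standard second-variation expansion gives
\[
\cP^g(\tilde E_k) - \cP^g(\Omega) = \tfrac12 Q_\Omega(u_k,u_k) + O\!\big(\|u_k\|_{C^1}\|u_k\|_{H^1}^2\big),
\]
where $Q_\Omega(u,u) = \int_{\partial \Omega}\big(|\nabla u|^2 - (|A|^2 + \mathrm{Ric}(\nu,\nu))u^2\big)$. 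Stability combined with the nondegeneracy of $\Omega$ (and projecting off the small mean of $u_k$ permitted by the volume constraint) yields $Q_\Omega(u_k,u_k) \geq c\|u_k\|_{H^1}^2$. Since $|\tilde E_k \Delta \Omega|_g \lesssim \|u_k\|_{L^1} \lesssim \|u_k\|_{H^1}$, this gives $\cP^g(\tilde E_k) - \cI^g(V_0) \gtrsim |\tilde E_k \Delta \Omega|_g^2 \geq \alpha_g(\tilde E_k)^2$, contradicting the propagated inequality.

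The main obstacle is the total absence of a priori structure for the isoperimetric regions: finiteness and isolation of $\cM^g_{V_0}$, the $C^{2,\alpha}$ convergence to a \emph{specific} $\Omega$ in the selection step, and the positive spectral gap of $Q_\Omega$ are all required and are all simultaneous consequences of the genericity hypothesis. The heart of the matter is the density of $\cG$: since the CMC problem depends on the Lagrange multiplier $H$ tied to $V_0$, the perturbation must be run uniformly across the one-parameter family of CMC problems, so one cannot quote the minimal-surface bumpy metrics theorem as a black box and must instead argue directly for the two-parameter family $(g,H)$.
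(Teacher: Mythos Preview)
Your overall strategy---generic strict stability via a bumpy-metrics argument, combined with the selection principle and a Fuglede-type inequality coming from the second variation---matches the paper's approach. The selection-principle step and the quadratic lower bound from strict stability are essentially carried out the same way in the paper (via Lemma~\ref{l:quand_Loj}, specifically \eqref{e:quant_stable}, whose proof goes through \cref{p:sel_pri} and \cref{l:LvsF}).

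There is, however, a genuine gap in how you set up the generic set. You define $\cG\subset\Gamma$ as the set of metrics for which \emph{every} closed CMC hypersurface is nondegenerate, and then assert density. But nondegeneracy for a \emph{single} value of $H$ (as in White's minimal-surface theorem) is a very different statement from nondegeneracy for \emph{all} $H$ simultaneously: a Sard--Smale argument on the two-parameter family $(g,H)$ yields only that the set of regular values is residual in $\Gamma\times\R$, not that its projection to $\Gamma$ consists of metrics with no degenerate CMC whatsoever. Generically one expects isolated values of $H$ (hence of $V$) at which degenerate CMC hypersurfaces persist, so your $\cG$ as written is plausibly nowhere dense, and the sentence ``density of $\cG$ in $\Gamma$ follows from a perturbation of the CMC equation with the metric and Lagrange multiplier varying simultaneously'' does not justify it. Indeed, note that with your definition of $\cG$ the set $\cV$ would be all of $(0,|M|_g)$, which is stronger than what the theorem claims and is a signal that $\cG$ has been overconstrained.

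The paper avoids this by running Sard--Smale directly on the Fredholm projection $\Pi\colon\cM(N)\to\Gamma\times\R$ (\cref{p:final_manifold}), obtaining a generic set of \emph{pairs} $(g,V)$, intersecting over the countably many diffeomorphism types $N$ (using \cref{l:finite_components}), and only then reading off $\cG\subset\Gamma$ and, for each $g\in\cG$, the open dense set $\cV$ of volumes. You gesture toward this in your final paragraph, but the body of the proposal does not implement it; the fix is to move the genericity to $\Gamma\times\R$ from the start and let $\cV$ be the fiber of the good set over $g$, rather than trying to secure nondegeneracy for all mean curvatures at once.
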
 

A key element of the proof here is a bumpyness result in the spirit of \cite{White_b1,White_b2}, except with a volume constraint. 

\begin{remark}
We note that given a metric $g$, fixing $V \in (0,|M|_{g})$ one can always find a nearby $g$ so that \eqref{eq:generic-quant-iso} holds for volume $V$ (without needing to perturb $V$). See \cref{coro:fixV-perturbg}. 
\end{remark}

Moreover, for any real analytic metric $g$, we prove an analogue of \eqref{eq:quant-iso-Rn} that holds for all volumes. 
\begin{theorem}\label{thm:quantit_analytic}
For $2\leq n\leq 7$, assume that $(M^n,g)$ is a real analytic, closed Riemannian manifold, and let $0<V_{0}<|M|_g$. There exist constants $C_0>0, \gamma \geq 0$, depending only on $(M,g)$ and $V_{0}$, so that
\begin{equation}\label{e:quant}
\cP^{g}(E) - \cI^{g}(V_{0}) \geq C_0 \alpha_{g}(E)^{2+\gamma}.
\end{equation}
for any $E \subset M$ with $|E|_{g} = V_{0}$. 
\end{theorem}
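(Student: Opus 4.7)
The strategy is to argue by contradiction and combine a Riemannian selection-principle reduction (after Cicalese--Leonardi) with a \L ojasiewicz--Simon inequality for the volume-constrained area functional, the latter made available by the real analyticity of $g$. Suppose no pair $(C_{0},\gamma)$ works, so there is a sequence $E_{k}\subset M$ with $|E_{k}|_{g}=V_{0}$, $\alpha_{g}(E_{k})\to 0$, and
\[
\cP^{g}(E_{k})-\cI^{g}(V_{0}) \le \tfrac{1}{k}\alpha_{g}(E_{k})^{2+\gamma}
\]
for any prescribed $\gamma\ge 0$. First I would run a Riemannian selection principle: minimize a penalized functional of the form $\cP^{g}(F)+\Lambda\bigl||F|_{g}-V_{0}\bigr|+\eta_{k}\bigl||F\Delta E_{k}|_{g}-\alpha_{g}(E_{k})\bigr|$ with $\Lambda$ large and $\eta_{k}$ carefully chosen, to produce a replacement sequence $F_{k}$ that still witnesses the contradiction inequality (up to constants) but is a quantitative almost-minimizer of the perimeter. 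By $L^{1}$-compactness a subsequence converges to some $\Sigma\in \cM^{g}_{V_{0}}$, and since $2\le n\le 7$ the Allard/De Giorgi regularity theory for almost-minimizers upgrades this to $C^{1,\alpha}$ convergence of $\partial F_{k}$ to the smooth hypersurface $\partial\Sigma$; in particular $\partial F_{k}$ is a normal graph over $\partial\Sigma$ given by a function $u_{k}\to 0$ in $C^{1,\alpha}$.

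Next I would apply the \L ojasiewicz--Simon inequality to the volume-penalized area functional. Writing $E_{u}$ for the region bounded by the normal graph of $u$ over $\partial\Sigma$, the map
\[
u\ \longmapsto\ \cP^{g}(E_{u})-\lambda\bigl(|E_{u}|_{g}-V_{0}\bigr),
\]
with $\lambda$ the constant mean curvature of $\Sigma$ (its Lagrange multiplier), is real analytic on a neighborhood of $0$ in an appropriate H\"older/Sobolev space of normal sections. Since $\Sigma$ is a critical point and its Jacobi operator is Fredholm, Simon's framework yields
\[
\cP^{g}(E_{u})-\cP^{g}(\Sigma)\ \ge\ c\,\|u\|_{L^{2}(\partial\Sigma)}^{\,2/(1-\theta)}
\]
for some \L ojasiewicz exponent $\theta\in[0,\tfrac12]$; setting $\gamma:=2\theta/(1-\theta)\ge 0$ rewrites the right-hand side as $c\|u\|_{L^{2}}^{2+\gamma}$. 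Since for a normal $C^{1,\alpha}$-small graph one has $\alpha_{g}(F_{k})\le |F_{k}\Delta\Sigma|_{g}\le C\|u_{k}\|_{L^{1}}\le C'\|u_{k}\|_{L^{2}}$, combining the two inequalities gives $\cP^{g}(F_{k})-\cI^{g}(V_{0})\ge c''\alpha_{g}(F_{k})^{2+\gamma}$, contradicting the selection-principle inequality once $k$ is large.

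The set $\cM^{g}_{V_{0}}$ need not be a singleton, so the limit $\Sigma$ extracted above depends on the subsequence. To handle this I would note that $\cM^{g}_{V_{0}}$ is compact in the $C^{1,\alpha}$ topology and that the \L ojasiewicz--Simon constants $c,\gamma$ are upper semicontinuous along analytic families of CMC hypersurfaces. A finite cover of $\cM^{g}_{V_{0}}$ by \L ojasiewicz--Simon neighborhoods, followed by taking the worst of finitely many exponents, produces a single pair $(C_{0},\gamma)$ valid uniformly.

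\textbf{Main obstacle.} The technical heart is the \L ojasiewicz--Simon step: establishing it for an \emph{analytic functional with a nonlinear volume constraint}, around a CMC (not minimal) real-analytic hypersurface. One must choose Banach spaces so that the linearization (the Jacobi operator restricted to zero-mean variations) is Fredholm with closed range, verify analytic dependence of the constrained area functional in those spaces, and then propagate the resulting inequality uniformly over the compact family of isoperimetric regions. The Riemannian selection principle is a secondary difficulty, since translations and dilations are unavailable, but should follow from standard interior regularity of almost-minimizers on manifolds.
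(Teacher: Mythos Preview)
Your overall architecture---selection principle to reduce to $C^{1,\alpha}$-small graphs over a smooth $\Sigma\in\cM^{g}_{V_{0}}$, then a \L ojasiewicz--Simon inequality, then a compactness/finite-cover argument over $\cM^{g}_{V_{0}}$---is exactly the paper's strategy. The gap is in the \L ojasiewicz step itself. The inequality you write,
\[
\cP^{g}(E_{u})-\cP^{g}(\Sigma)\ \ge\ c\,\|u\|_{L^{2}(\partial\Sigma)}^{2+\gamma},
\]
is \emph{false} whenever $\Sigma$ is not an isolated critical point of the constrained area functional. If the Jacobi operator (restricted to zero-mean variations) has nontrivial kernel $K$, there can be a family of nearby minimizers $\Sigma+u$ with $\cP^{g}(\Sigma+u)=\cP^{g}(\Sigma)$ but $\|u\|_{L^{2}}>0$; already for the round product $\mathbb{S}^{1}(R)\times\mathbb{S}^{n-1}$ the half-volume minimizers form a one-parameter family of translates, killing your inequality. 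Simon's gradient inequality $|\cP(u)-\cP(\Sigma)|^{1-\theta}\le C\|\nabla\cP(u)\|$ does not imply a lower bound in terms of $\|u\|$ without an additional assumption (isolatedness or strict stability), so this is not a matter of quoting the right reference.

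What the paper actually proves is the \emph{distance-to-critical-set} form
\[
\cP^{g}(\Sigma+u)-\cP^{g}(\Sigma)\ \ge\ C\,\alpha_{\delta}(\Sigma+u)^{2+\gamma},
\]
where $\alpha_{\delta}$ measures the $L^{1}$-distance to nearby minimizers. This is obtained not by a black-box \L ojasiewicz--Simon, but by a Lyapunov--Schmidt reduction: one writes $u=u_{\cL}+u^{\perp}$ with $u_{\cL}=P_{K}u+\Upsilon(P_{K}u)$ lying in a finite-dimensional ``center manifold'' $\cL$, proves a quadratic lower bound $\cP(\Sigma+u)-\cP(\Sigma+u_{\cL})\ge C\|u^{\perp}\|_{W^{1,2}}^{2}$ coming from the positivity of the second variation on $K^{\perp}$, and then applies the \emph{finite-dimensional} \L ojasiewicz distance inequality to the analytic function $P(\zeta)=\cP(\Sigma+\zeta+\Upsilon(\zeta))$ on $K$. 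Summing the two pieces gives a lower bound by $(\inf_{\tilde u\in\cM}\|u-\tilde u\|)^{2+\gamma}$, which controls $\alpha_{\delta}$. Your chain $\alpha_{g}(F_{k})\le|F_{k}\Delta\Sigma|\le C\|u_{k}\|_{L^{2}}$ then becomes unnecessary (and is the wrong direction anyway: you need the right-hand side of the \L ojasiewicz inequality to \emph{already be} comparable to $\alpha_{g}$, not to dominate it). Once you fix this, your compactness/finite-cover paragraph goes through as written and matches the paper.
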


As remarked above, the main difference between \cref{thm:quantit_analytic} and essentially all the known quantitative inequalities is that we don't have any {\it a priori} knowledge of the structure/shape or any classification on the minimizers of \eqref{e:isop}. For this reason we expect this method to be applicable to a variety of other problem. On the other hand, the price that we have to pay is the exponent $\gamma>0$ (see \cref{s:previous} for a more in-depth comparison). We remark that our result is optimal both in the analyticity assumption and in the fact that $\gamma$ might be (arbitrarily) greater than $0$, see \cref{ss:example}. The restriction $2\leq n\leq 7$ is due to the fact that minimizers are smooth only in these dimensions.

\subsection{Idea of the proof of \cref{thm:generic-isop} and \cref{thm:quantit_analytic}} 

The key idea for \cref{thm:generic-isop} and \cref{thm:quantit_analytic} is that the quantitative inequality \eqref{e:quant} with $\gamma=0$ corresponds to integrability of the minimizers, that is, roughly speaking, every null direction of the second variation can be killed by choosing a nearby minimizer. More precisely 

\begin{definition}
	We say that a minimizer $\Sigma$ of \eqref{e:isop} is \emph{integrable} if every Jacobi field on $\Sigma$ with $0$ average is the infinitesimal generator of a one parameter family of minimizers.
\end{definition} 
For example, in the case of the Euclidean space, balls are known to be the unique minimizers, and the $0$-average part of the kernel of the Jacobi operator is composed only of infinitesimal generators of translations, that is balls are integrable and the second order expansion gives the inequality. Since in our case integrability is in general false (see \cref{ss:example}) we have to use a stronger tool, that is the following infinite dimensional version of the so-called \L ojasiewicz inequality. 
\begin{lemma}[Quantitative inequality and \L ojasiewicz inequality]\label{l:quand_Loj}
	For any $n\geq 2$, let $(M^{n},g)$ be an analytic, compact Riemannian manifold and $\Sigma \subset M$ a smooth isoperimetric region of volume $|\Sigma|_{g} = V_{0}$. There exist constants $\delta,\gamma,C_0>0$, depending only on $(M,g)$ and $\Sigma$, such that if $E\subset M$ has $\|\chi_E-\chi_\Sigma\|_{L^1} \leq \delta$ and $|E|_{g} = V_{0}$ then
	\begin{equation}\label{e:quant_pointwise}
	\cP^{g}(E) - \cP^{g}(\Sigma) \geq C_0 \,\left(\alpha_{\delta}(E,g)\right)^{2+\gamma}
	\end{equation}
	where 
	\begin{equation}
	\alpha_\delta(E,g):=\inf\left\{ |E\Delta \tilde \Sigma|_g  \,:\,\tilde\Sigma \in \cM_{V_{0}},\, \|\chi_{\tilde\Sigma}-\chi_\Sigma\|_{L^1} \leq \delta \right\}\,.
	\end{equation}
	and we recall that $\cM^{g}_{V_{0}}$ are the isoperimetric regions of volume $V_{0}$.

	If $\Sigma$ is {integrable}, then we can take $\gamma=0$. If $\Sigma$ is strictly stable then we can replace \eqref{e:quant_pointwise} with the stronger
		\begin{equation}\label{e:quant_stable}
		\cP^{g}(E) - \cP^{g}(\Sigma) \geq C_0 \,\left|E\Delta\Sigma\right|^2.
		\end{equation}
\end{lemma}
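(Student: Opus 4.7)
The plan is to argue by contradiction and reduce, via a selection principle, to the case where the competitor is a small $C^{2,\alpha}$ normal graph over $\Sigma$; at that point the Łojasiewicz--Simon inequality for the perimeter functional on volume-preserving normal graphs yields \eqref{e:quant_pointwise}. Suppose for contradiction there is a sequence $E_k\subset M$ with $|E_k|_g=V_0$, $\|\chi_{E_k}-\chi_\Sigma\|_{L^1}\to 0$, and
$$\cP^g(E_k)-\cP^g(\Sigma)<\eps_k\,\alpha_\delta(E_k,g)^{2+\gamma}$$
with $\eps_k\to 0$. Following a Cicalese--Leonardi-type selection principle, minimize a penalized functional of the form $\mathcal{F}_k(F)=\cP^g(F)+\Lambda_k|F\Delta E_k|_g$ among $F$ with $|F|_g=V_0$ (the volume constraint enforced by a Lagrange multiplier), with $\Lambda_k$ chosen just large enough to keep the minimizers $F_k$ close to $E_k$ but small enough to preserve quasi-minimality. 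The $F_k$ are $(\Lambda,r_0)$-quasiminimizers of perimeter, and Allard/De Giorgi/Tamanini $\eps$-regularity (singularity-free since $n\leq 7$) gives, after passing to a subsequence, $C^{2,\alpha}$ convergence $\partial F_k\to\partial\Sigma$; thus $\partial F_k=\graph(u_k)$ with $\|u_k\|_{C^{2,\alpha}(\Sigma)}\to 0$.

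On the Banach space $X:=\{v\in C^{2,\alpha}(\Sigma):\int_\Sigma v\,d\cH^{n-1}=0\}$, the functional $\Pi(v):=\cP^g(\graph(v))-\cP^g(\Sigma)$ (with volume normalized by reparametrization) is real analytic: the area of a normal graph is a real analytic function of $(v,\nabla v)$ since $g$ is. Its critical points are the constant-mean-curvature graphs, and by unique continuation for elliptic analytic equations the zero set of $\Pi$ near $0$ coincides with the set $\cC$ of $L^1$-nearby isoperimetric minimizers in $\cM^g_{V_0}$. The Hessian $\Pi''(0)$ equals the Jacobi operator $J_\Sigma$ (with a Lagrange-multiplier correction for the mean-curvature constraint), a self-adjoint elliptic operator Fredholm of index $0$ on $X$. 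The hypotheses of the infinite-dimensional Łojasiewicz--Simon inequality are therefore satisfied, and one obtains $\theta\in(0,1/2]$ and $C,\rho>0$ such that
$$|\Pi(v)|^{1-\theta}\leq C\,\|\Pi'(v)\|_{(C^{2,\alpha})^*}\quad\text{for }\|v\|_{C^{2,\alpha}}\leq\rho.$$
A standard gradient-flow (chain-rule) integration of this inequality yields the companion bound $\dist_{L^1}(v,\cC)\leq C|\Pi(v)|^{\theta}$, equivalently $\Pi(v)\geq c\,|E\Delta\tilde\Sigma|_g^{1/\theta}$ for the nearest $\tilde\Sigma\in\cC$. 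Setting $\gamma:=1/\theta-2\geq 0$ gives \eqref{e:quant_pointwise} for $F_k$, and the quasi-minimality $\mathcal{F}_k(F_k)\leq\mathcal{F}_k(E_k)$ together with the comparability $\alpha_\delta(F_k,g)\gtrsim\alpha_\delta(E_k,g)$ (built into the choice of $\Lambda_k$) transfers the estimate back to $E_k$, contradicting the assumption.

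The two special cases are strictly easier. If $\Sigma$ is strictly stable, $J_\Sigma$ is coercive on $X$, so $\Pi$ admits a nondegenerate second order expansion $\Pi(v)\geq c\|v\|_{L^2}^2\geq c'|E\Delta\Sigma|_g^2$, giving \eqref{e:quant_stable} directly without selection. If $\Sigma$ is integrable, the implicit function theorem shows every element of $\ker J_\Sigma$ is tangent to a smooth one-parameter family of minimizers, so on a slice transverse to $\cM^g_{V_0}$ the reduced functional is strictly positive definite; this is the $\theta=\tfrac12$ case of Łojasiewicz--Simon, giving $\gamma=0$.

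The principal obstacles I anticipate are: (i) executing the volume-constrained selection principle so that $C^{2,\alpha}$ convergence of $\partial F_k$ to $\partial\Sigma$ is retained in spite of the fact that the original $E_k$ are merely Caccioppoli sets — this forces a delicate choice of $\Lambda_k$ and uses Tamanini-type $\eps$-regularity for volume-constrained quasiminimizers; and (ii) verifying all hypotheses of the Łojasiewicz--Simon inequality on the volume-constrained slice, in particular the real analyticity of $\Pi$ (which uses real analyticity of $g$ in an essential way) and the Fredholm property of $J_\Sigma$. The analyticity requirement explains why, as shown in \cref{thm:example}, no quantitative inequality of this form can be expected for merely smooth metrics.
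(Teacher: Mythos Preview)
Your overall architecture is the same as the paper's: argue by contradiction, use a selection principle to replace a bad Caccioppoli sequence by $C^{1,\alpha}$ normal graphs over $\Sigma$, and then apply a \L ojasiewicz-type inequality on graphs. The differences are in how each of the two steps is executed.

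For the selection step, the paper does \emph{not} minimize an additive penalty $\cP(F)+\Lambda_k|F\Delta E_k|$; it follows Cicalese--Leonardi more closely and minimizes the ratio functional
\[
\cQ_j(E,\gamma):=\frac{\delta\cP(E)}{\alpha_\delta(E)^{2+\gamma}}+\Bigl(\frac{\alpha_\delta(E)}{\alpha_\delta(W_j)}-1\Bigr)^2,
\]
whose second term forces $\alpha_\delta(E_j)\sim\alpha_\delta(W_j)>0$ directly. Your additive scheme can be made to work with $\Lambda_k\equiv\Lambda$ fixed (from $\cP(F_k)+\Lambda|F_k\Delta E_k|\le\cP(E_k)$ one gets $|F_k\Delta E_k|\le\delta\cP(E_k)/\Lambda<\eps_k\alpha_\delta(E_k)^{2+\gamma}/\Lambda$, hence $\alpha_\delta(F_k)\ge\tfrac12\alpha_\delta(E_k)>0$ and $\delta\cP(F_k)\le\delta\cP(E_k)$); so this difference is cosmetic.

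The more substantive divergence is in the graph step. The paper does \emph{not} invoke the abstract infinite-dimensional gradient \L ojasiewicz--Simon inequality and then ``integrate along the gradient flow'' to obtain a distance estimate. Instead it performs an explicit Lyapunov--Schmidt reduction (\cref{l:LS}): one writes $u=u_{\cL}+u^\perp$ with $u_{\cL}=P_Ku+\Upsilon(P_Ku)$ on a finite-dimensional manifold $\cL$, and splits
\[
\cP(\Sigma+u)-\cP(\Sigma)=\bigl[\cP(\Sigma+u)-\cP(\Sigma+u_{\cL})\bigr]+\bigl[P(P_Ku)-P(0)\bigr].
\]
The first bracket is bounded below by $C\|u-u_{\cL}\|_{W^{1,2}}^2$ via a direct second-order Taylor expansion (the ``stabby'' estimate \eqref{e:stabby}), and the second by the \emph{finite-dimensional} distance \L ojasiewicz inequality for the analytic function $P:K\cap U\to\R$. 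Your route from the gradient inequality $|\Pi(v)|^{1-\theta}\le C\|\Pi'(v)\|$ to the distance inequality $\dist(v,\cC)\le C\Pi(v)^\theta$ implicitly asks for the $L^2$-gradient flow of $\Pi$ (volume-preserving mean curvature flow) to exist for all time and converge to a critical point; this is not a standard fact and is exactly what the Lyapunov--Schmidt reduction is designed to bypass. You should either cite a distance-type \L ojasiewicz inequality directly (e.g.\ Feehan), or reduce to finite dimensions as the paper does. A minor point: the lemma is for all $n\ge2$ with $\partial\Sigma$ assumed smooth, so the ``$n\le7$'' restriction in your $\eps$-regularity step is unnecessary.
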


This is a local version of \cref{thm:quantit_analytic} valid in every dimension as long as $\Sigma$ is smooth, and the proof of \cref{thm:quantit_analytic} follows from \cref{l:quand_Loj} and a simple compactness argument. On the other hand the proof of \cref{l:quand_Loj} is a consequence of the so-called \emph{selection principle}, introduced in \cite{CiLe} for the quantitative inequality in Euclidean space, and an infinite dimensional version of \L ojasiewicz inequality for competitors $E$ which are graphical on $\Sigma$, which replaces the so-called Fuglede inequality.

\medskip

\cref{thm:generic-isop} follows combining \eqref{e:quant_stable} with a bumpiness type theorem that guarantees that for generic metric and values of the enclosed volume $V_0$, minimizers of \eqref{e:isop} are strictly stable, that is the kernel of the second variation is empty. The only additional difficulty with respect to the results in \cite{White_b1,White_b2} is the parameter $V_0$, which corresponds essentially to a Lagrange multiplier.

\subsection{Technical discussion of related work}\label{s:previous} As mentioned above, there has been a lot of recent work on quantitative stability not just for the isoperimetric inequality but also for many other geometric (e.g. Brunn-Minkowski \cite{FiJe}), spectral (e.g.\ Faber-Krahn \cite{BrDeVe}), and functional (e.g.\ Sobolev \cite{Neumayer}) inequalities.  We defer to the recent survey of Fusco \cite{FuSurvey} for a more comprehensive list. When the underlying space and the extremizers are highly symmetric these results are often proven by symmetrization or rearrangement (see, e.g. \cite{CiFuMaPr}). In this vein we'd also like to point out the works \cite{ChIl, HySe}, which do not use symmetrization techniques but do exploit the richness of the symmetry group of the underlying space.

In the anisotropic setting, optimal transport techniques have been used with great success (see, e.g. \cite{BaKr}). However, usually convexity of the extremizers is required (e.g. to guarantee the necessary regularity on the transport map). Other techniques, such as the selection principle, often require understanding the spectrum of the relevant energy linearized around the extremizers (to obtain estimates like Fuglede's, \cite{Fug}). In the generality we consider here, there is very little one can say about the structure of the extremizers (i.e. isoperimetric regions) or symmetry of the underlying space. This lack of knowledge is our primary technical obstacle.

As alluded to above, we are able to overcome this obstacle by establishing the \L ojasiewicz-Simon type inequality \eqref{e:quant_pointwise}. \L ojasiewicz's work \cite{Loj}, was first applied to geometric analysis by Simon in \cite{Simon0}, in order to prove the regularity of solutions to certain elliptic PDE near isolated singularities. These ideas have been further developed by a number of different authors in a number of different settings, e.g., to understand the long term behavior of some gradient flows \cite{To, CoSpVe} or to prove results in the same vein as \cite{Simon0}, but either in the parabolic, see e.g.\ \cite{CoMi, ChSch}, or purely variational, see e.g.\ \cite{EnSpVe}, settings. See the introduction of \cite{Feehan1} and the references therein for a more comprehensive history. As far as we are aware, this is the first instance of a \L ojasiewicz-Simon type inequality being used to prove a quantitative stability result. 

\subsection{Results for stable minimal surfaces} 

We briefly note that the techniques used to prove \cref{l:quand_Loj} can be used to prove the following quantitative minimality result for minimal surfaces, related to the works \cite{Wh2,InMa}.
\begin{theorem}\label{th:min-surf-stab}
Consider a real analytic Riemannian manifold $(M^{n},g)$. Assume that $\Gamma^{n-1}\subset M$ is a smooth stable minimal hypersurface. Then, there is $\delta,C>0$ and $\gamma\geq 0$ (depending on $\Gamma,M,g)$ so that for $\cM$ the set of $\tilde\Gamma$ homologous to $\Gamma$ with the same mass and small flat norm $\F(\Gamma,\tilde\Gamma) < \delta$,\footnote{Here $\F(\Gamma,\Gamma') = \inf\{\bM(A) + \bM(B): A + \partial B = \Gamma - \Gamma'\}$ is the flat norm. See e.g., \cite[Section 2]{InMa}} we have that for any current $S$, homologous to $\Gamma$ with $\F(\Gamma,S)< \delta$, we have
\[
\bM(S) - \bM(\Gamma) \geq C \left( \inf_{\tilde\Gamma \in \cM} \F(S,\tilde\Gamma)\right)^{2+\gamma}. 
\]
where $\bM(\cdot)$ is the mass (area) of the current. 
\end{theorem}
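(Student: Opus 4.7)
The plan is to follow exactly the strategy of \cref{l:quand_Loj}: a selection principle reduces the problem to competitors that are small normal graphs over $\Gamma$, and an infinite dimensional \L ojasiewicz--Simon inequality applied to the area functional on such graphs yields the quantitative bound. Relative to the isoperimetric setting, the main simplification is the absence of a volume constraint and its associated Lagrange multiplier, while the main adaptation is that we work with integral currents and the flat norm in place of sets and the symmetric difference.

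I would argue by contradiction. Suppose there is a sequence of currents $S_k$ homologous to $\Gamma$ with $\F(\Gamma, S_k) \to 0$ and
\[
\bM(S_k) - \bM(\Gamma) < \tfrac{1}{k} \left( \inf_{\tilde\Gamma \in \cM} \F(S_k, \tilde\Gamma) \right)^{2+\gamma}.
\]
Following the selection principle of \cite{CiLe}, replace each $S_k$ by a minimizer $T_k$ of an appropriately penalized mass functional among currents homologous to $\Gamma$, arranged so that the $T_k$ are almost-minimal with prescribed flat distance to $\cM$ and still violate the target inequality up to a universal constant. Standard almost-minimality arguments together with flat compactness force $T_k \to \Gamma$, and Allard regularity with elliptic bootstrap (using analyticity of $g$) ensures that for $k$ large $T_k$ is the normal graph of some $u_k : \Gamma \to \R$ with $\|u_k\|_{C^{m,\alpha}} \to 0$ for every $m$.

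Next, I would view the area functional $F(u) := \bM(\graph_\Gamma u) - \bM(\Gamma)$ as a real analytic function on a $C^{2,\alpha}$-neighborhood of $0$ in the normal graphs over $\Gamma$. Minimality of $\Gamma$ gives $F'(0) = 0$, stability gives non-negativity of the second variation $F''(0)$, and the nearby critical points of $F$ with the same mass as $\Gamma$ are precisely the graphical representatives of $\cM$. Simon's \L ojasiewicz--Simon inequality then provides an exponent $\theta \in (0,\tfrac{1}{2}]$ and a constant $C > 0$ with
\[
|F(u)|^{1-\theta} \leq C \, \|F'(u)\|_{(H^{1})^*}
\]
for $\|u\|_{C^{2,\alpha}}$ small, which, integrated along the $H^{1}$-gradient flow of $F$ exactly as in the proof of \cref{l:quand_Loj}, yields
\[
F(u) \geq C \, \dist_{L^2}(u, \cM)^{2+\gamma}
\]
with $\gamma = 1/\theta - 2 \geq 0$, and $\gamma = 0$ in the integrable case. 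Since all graphs in question are uniformly small in $C^1$, the flat distance $\F(T_k, \tilde\Gamma)$ is comparable to the $L^1$-norm of the difference of defining functions, and hence controlled by the $L^2$-distance; combining these estimates contradicts the contradiction hypothesis.

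The main technical obstacle I would expect is the first step: formulating the selection principle cleanly in the flat topology while respecting the homology constraint, and verifying that the penalized minimizers inherit enough almost-minimality to invoke Allard regularity while still violating the target inequality by a definite amount. Once the reduction to small graphs is complete, the second step is a direct analogue of the analytic machinery already developed for \cref{l:quand_Loj}, in fact simpler because there is no Lagrange multiplier to track.
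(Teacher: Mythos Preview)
Your overall architecture matches the paper's: the paper does not give an independent proof of \cref{th:min-surf-stab} but simply says it follows in a nearly identical manner to \cref{l:quand_Loj}, i.e.\ selection principle to reduce to small normal graphs, followed by a \L ojasiewicz-type inequality for the area functional on those graphs. The one substantive deviation is in how you obtain the distance-form \L ojasiewicz inequality. You invoke Simon's gradient inequality $|F(u)|^{1-\theta}\leq C\|F'(u)\|$ and then propose to ``integrate along the $H^1$-gradient flow of $F$ exactly as in the proof of \cref{l:quand_Loj}'' to reach $F(u)\geq C\,\dist_{L^2}(u,\cM)^{2+\gamma}$. That attribution is inaccurate: the paper's proof of \cref{l:quand_Loj} (via \cref{l:LS} and \cref{l:LvsF}) uses no gradient flow at all. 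Instead it performs a Lyapunov--Schmidt reduction onto the finite-dimensional Jacobi kernel $K$, applies the finite-dimensional distance-form \L ojasiewicz inequality \eqref{e:loj_p} to the reduced analytic function $P$ on $K$, and combines this with the static second-order estimate \eqref{e:stabby} in the directions orthogonal to $K$. Your gradient-flow route can be made to work, but it carries the extra burden of establishing long-time existence and convergence of the flow near $\Gamma$; the paper's Lyapunov--Schmidt argument is purely variational and avoids this. In the unconstrained minimal-surface setting the Lyapunov--Schmidt step is in fact simpler than in \cref{l:LS}, since one works on all of $C^{2,\alpha}(\Gamma)$ rather than on the zero-average submanifold $\cB(\Sigma)$.
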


This follows in a nearly identical manner to \cref{l:quand_Loj}. We note that with appropriate modifications, one can prove a similar result in higher co-dimension. It would be interesting to understand an analogue of \cref{th:min-surf-stab} for finite index surfaces (see \cite{Wh2}). 

\subsection{Plan of the paper} The first section, \cref{ss:prel}, is dedicated to fixing some notations and introducing some preliminary tools, particularly the various Banach manifolds we will use in the rest of the paper. In \cref{ss:Loj} we prove the infinite dimensional version of \L ojasiewicz inequality \cref{l:quand_Loj} and \cref{thm:quantit_analytic}, while \cref{ss:example} is dedicated to its optimality. Finally in \cref{ss:generic} we prove \cref{thm:generic-isop} and the bumpy metric result needed to do that.

\subsection{Acknowledgment}

O.C.\ was partially supported by an NSF grant DMS-1811059. He is grateful to Michael Eichmair for pointing out reference \cite{Ch:CMCiso}.  M.E.\ was partially supported by an NSF postdoctoral fellowship, NSF DMS 1703306 and by David Jerison's grant DMS 1500771. L.S.\ was partially supported by an NSF grant DMS-1810645. All three authors would like to thank Bozhidar Velichkov for many helpful ideas and conversations, without which this article would be much poorer. 

\section{Preliminaries and notations}\label{ss:prel}
We start by introducing some concepts that will be used throughout the paper. 
\subsection{The isoperimetric problem}

Recall that the distributional perimeter of $E\subset M$ is defined by
\[
\cP^g(E) = \sup \left\{\int_E \mathrm{div}_g(\phi)\, d{\rm vol}_g\mid \phi \in C^1(M; TM), \,\|\phi\|_{L^\infty} \leq 1\right\}.
\]
Sometimes, when it is clear in context, we will eliminate the dependence on $g$ from the notation. Then, for a fixed constant $0<V_{0} <|M^n|_g$, where $|\cdot|_g$ denotes the volume on $M$ induced by $g$, we study the minimization problem
\begin{equation}\label{e:isop}
\cI^{g}(V_{0}) : = \inf \{\cP^{g}(E) : E \in \cA^{g}_{V_{0}}\}
\end{equation}
where
\[
\cA^{g}_{V_{0}} : = \{E \subset M \, : \, \chi_{E} \in\mathrm{BV}(M), \, |E|_{g} = V_{0}\}
\]
is the set of Caccioppoli sets with volume $V$. If $\Omega \in \cA^{g}_{V_{0}}$ attains $\cI^{g}(V_{0})$, we say that $\Omega$ is isoperimetric. We let $\cM_{V_{0}}^{g}$ denote the set of isoperimetric regions of volume $V_{0}$. 

\subsection{Graphical regions} Let $\Sigma\subset M^n$ be such that $\de \Sigma$ is smooth and embedded and let $\nu_\Sigma$ be the normal to $\de \Sigma$ in $M^n$ pointing outside $\Sigma$. Let $f\colon \de\Sigma \to \R$, then the graph of $f$ is defined by 
$$
\graph(f):=\{ (x, \exp_x(f(x)\,\nu_\Sigma(x) ))\,:\,x\in \de \Sigma\}\,,
$$ 
and we will sometimes use the notation $\graph (f)=\de \Sigma+f$. Moreover we associate to each such graph a set of finite perimeter $\Sigma+f$ in such a way that $\de (\Sigma+f)=\de\Sigma+f=\graph(f)$, with orientation chosen so that $\nu_{E(f)}\cdot \nu_\Sigma\geq 0$.  When the set $\Sigma$ is clear from context, we will often abuse notation and use $f$ to refer to both the function but also the submanifold $\partial \Sigma + f$ or the subset $\Sigma +f$. 

If $u\colon N^{n-1}\to \Sigma^n$ is a smooth embedding from a compact orientable manifold $N^{n-1}$ to $M^n$, we will denote by $[u]$ the set of all maps of the form $u\circ \phi$, where $\phi \colon N\to N$ is a smooth diffeomorphism; that is the elements of $[u]$ are all parametrizations of the same surface $u(N)$.

\subsection{Banach manifolds} 
We will denote by 
$$
B_r^{k,\alpha}(h_0):=\{h\in C^{k,\alpha}\,:\,\|h-h_0\|_{C^{k,\alpha}}<r \}\,.
$$
Given $r, V_0>0$, and $\Sigma$ a minimizer of \eqref{e:isop} for a $C^3$ metric $g_0$ with $|\Sigma|_{g_0}=V_0$, we are interested in the following sets
\begin{gather}
\cB_r(\Sigma):=\{  f\in B_r^{2,\alpha} \,:\, |\Sigma+f|_{g_0}=|\Sigma|_{g_0} \} \label{e:Banach1}\\
\cB_r(\Sigma,g_0):=  \{ (f,g)\in B_r^{2,\alpha}\times B_r^{3}(g_0)\,:\, |\Sigma+f|_g=|\Sigma|_{g_0}    \}\label{e:Banach2}\\
\cB_r(\Sigma,g_0,V_0):=  \{ (f,g,V)\in B_r^{2,\alpha}\times B_r^{3}(g_0)\times B_r(V_0)\,:\, |\Sigma+f|_g=V    \}\label{e:Banach3}
\end{gather}

It is straightforward to see that these are Banach manifolds, we sketch the proof for the reader's convenience.

\begin{lemma}\label{l:volume_constrained_manifolds}
	Let $\Sigma$ be a smooth minimizer of the isoperimetric problem \eqref{e:isop} for the metric $g_0$. There exists $\delta>0$, depending on $\Sigma,g_0$, such that $\cB_\delta(\Sigma)$, $\cB_\delta(\Sigma,g_0)$ and $\cB_\delta(\Sigma,g_0,V_0)$ are separable, codimension one Banach submanifolds of the separable Banach spaces $C^{2,\alpha}$, $C^{2,\alpha}\times \Gamma$ and $C^{2,\alpha}\times \Gamma\times \R$ respectively (modeled on the Banach space of functions with zero average on $\de \Sigma$ with respect to the metric $g_0$).
\end{lemma}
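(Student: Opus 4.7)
The plan is to apply the implicit function theorem on Banach spaces to the scalar volume constraint in each of the three cases. In each case I will exhibit a smooth scalar functional whose zero set is the desired set, show its differential at the base point is surjective onto $\R$ (using the first variation of volume), and then invoke the Banach space implicit function theorem to conclude the submanifold structure with tangent space identified as the kernel.

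First, for the simplest case $\cB_r(\Sigma)$: I define $\Phi\colon B_r^{2,\alpha} \to \R$ by $\Phi(f) := |\Sigma+f|_{g_0} - |\Sigma|_{g_0}$. Writing the perturbed volume via the area formula in the tubular neighborhood of $\de\Sigma$ (which is valid for $r$ small enough that $\Sigma+f$ is still embedded and close to $\Sigma$) gives a smooth dependence of $\Phi$ on $f$. The first variation of volume computation yields
\[
d\Phi(0)[h] \;=\; \int_{\de\Sigma} h\, d\cH^{n-1}_{g_0},
\]
which is surjective onto $\R$ (take any $h$ with nonzero average) and has kernel equal to the space of $C^{2,\alpha}$ functions with zero average on $\de\Sigma$ with respect to $g_0$. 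The implicit function theorem then produces $\delta>0$ so that $\Phi^{-1}(0) \cap B_\delta^{2,\alpha}$ is a $C^\infty$ Banach submanifold of codimension one whose tangent space at $0$ is precisely the zero-average functions.

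Second, for $\cB_r(\Sigma,g_0)$ and $\cB_r(\Sigma,g_0,V_0)$: I set
\[
\tilde\Phi(f,g) := |\Sigma+f|_g - |\Sigma|_{g_0}, \qquad \hat\Phi(f,g,V) := |\Sigma+f|_g - V,
\]
both of which are smooth on the relevant products by the same tubular-neighborhood/area-formula argument together with the smooth dependence $g\mapsto d\mathrm{vol}_g$. At $(0,g_0)$, respectively $(0,g_0,V_0)$, the partial differential in $f$ is again $h\mapsto \int_{\de\Sigma} h\, d\cH^{n-1}_{g_0}$, which is surjective onto $\R$. Hence the implicit function theorem again applies, locally solving the constraint for the non-zero-average component of $f$ in terms of the remaining variables, and gives the desired codimension-one Banach submanifold structure modeled on zero-average functions (times $\Gamma$, resp.\ times $\Gamma\times\R$).

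Separability is inherited from the ambient spaces once the appropriate (separable) Hölder space is used, exactly as in White's bumpy metrics framework \cite{White_b1,White_b2}. The only mild technical step is the smoothness of $(f,g)\mapsto |\Sigma+f|_g$ as a map into $\R$, which I would verify by writing it explicitly as an integral over the fixed reference manifold $\de\Sigma$ via the exponential map and the volume form of $g$, whose ingredients depend smoothly on the inputs in the chosen norms. This is the only place where regularity matters; it is routine but is the main thing to check, since the rest of the argument is a direct IFT application.
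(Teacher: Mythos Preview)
Your proposal is correct and follows essentially the same approach as the paper: define the scalar volume-defect functional, compute its differential at the base point as $h\mapsto\int_{\de\Sigma}h\,d\sigma_{g_0}$ via the first variation of volume, observe surjectivity onto $\R$ with kernel the zero-average functions, and invoke the implicit function (submersion) theorem. The paper sketches only the case $\cB_\delta(\Sigma)$ and declares the other two identical, whereas you spell out all three and comment on the smoothness of $(f,g)\mapsto|\Sigma+f|_g$; but the underlying argument is the same.
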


\begin{proof}
	We sketch only the case $\cB_r(\Sigma)$, as the other two are the same. Separability follows from the separability of $C^{2,\alpha}$, so we only need to show that the function $F(f):=|\Sigma+f|_{g_0}-|\Sigma|_{g_0}$ is a submersion near $0$. To do this we observe that, by a well known computation (see for instance \cite[Lemma 3.1 and Section 7]{White_b2}) 
	$$
	DF(0)[v]=\int_{\de\Sigma} v\,d\sigma_{g_0}\,,
	$$
	where $d\sigma_{g_0}$ is the volume form of $\de \Sigma$ in the metric $g_0$. 
	Choosing $v$ as a constant, we immediately see that the differential is surjective, so that there exists $\delta>0$ depending on $\Sigma$ such that $\cB_\delta(\Sigma,g_0)$ is a Banach submanifold of $C^{2,\alpha}(\de \Sigma)$. Since the kernel of $DF(0)$ is the space of functions $v\in C^{2,\alpha}$ such that $\int_{\de \sigma} v\,d\sigma_{g_0}=0$, the proof is complete.
\end{proof}

In the sequel we will denote with $\nabla_\cB, \nabla^2_\cB$ the gradient and the Hessian respectively in $\cB_r(\Sigma)$, and with $\nabla_u,\nabla_g$ and so on the directional derivatives. Using this, we can define the submanifolds
\begin{gather}
\cM_r(\Sigma,g_0):=  \{ (f,g)\in \cB_r(\Sigma,g_0) \,:\, \nabla_u\cP^g(\Sigma+f)=0 \}\label{e:Banach4}\\
\cM_r(\Sigma,g_0,V_0):= \{ (f,g,V)\in \cB_r(\Sigma,g_0,V_0)\,:\, \nabla_u\cP^g(\Sigma+f)=0\}\,,\label{e:Banach5}
\end{gather}
which will be used in the proof of \cref{thm:generic-isop}.

\subsection{Properties of isoperimetric regions} 

The following result concerning regularity of isoperimetric regions is well known. See, e.g. \cite{Mag}.
\begin{theorem}\label{t:regularity}
We can choose representatives of minimizers of \eqref{e:isop} so that their boundaries are compact, have constant mean curvature, and are regular away from singular set of Hausdorff dimension at most $n-8$. 
\end{theorem}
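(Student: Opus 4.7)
The plan is to reduce the statement to the standard regularity theory for $(\Lambda,r_0)$-perimeter minimizers, which is worked out in detail in Maggi's book. The first step is existence of a minimizer for $\cI^g(V_0)$. This follows from the direct method: take a minimizing sequence $E_k$, use the uniform perimeter bound together with the $\mathrm{BV}$ compactness theorem on the closed manifold $M$ (covered by finitely many charts, so local Euclidean compactness suffices) to extract a subsequential limit $\Omega$ with $\chi_{E_k}\to \chi_\Omega$ in $L^1$; lower semicontinuity of $\cP^g$ then gives $\cP^g(\Omega)\le \liminf \cP^g(E_k)=\cI^g(V_0)$, and the $L^1$ convergence preserves the volume constraint.

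The second step is to upgrade the volume-constrained minimality to an unconstrained almost-minimality. I would build, as in \cite{Mag}, a smooth one-parameter family of diffeomorphisms of $M$ with non-vanishing first variation of volume at $\Omega$, localized in some small ball $B_{r_0}(x_0)$ disjoint from the support of any competitor's symmetric difference. For any Caccioppoli set $F$ with $F\Delta\Omega\Subset B_{r_1}(y)$ for some small $r_1\ll r_0$ and $y\in M$, we adjust $F$ by applying a small diffeomorphism supported in $B_{r_0}(x_0)$ that restores the volume to $V_0$, at the cost of changing the perimeter by at most $C|F|-|\Omega||$, and we bound the volume discrepancy by $|F\Delta\Omega|$. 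This yields the $(\Lambda,r_0)$-minimality estimate
\[
\cP^g(\Omega)\le \cP^g(F)+\Lambda\,|F\Delta \Omega|,
\]
for all competitors $F$ with $F\Delta\Omega\Subset B_{r_0/2}(y)$, $y\in M$, and some $\Lambda=\Lambda(g,\Omega,r_0)$.

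The third step is to invoke the regularity theory for $(\Lambda,r_0)$-minimizers in a Riemannian manifold: working in normal coordinates the metric is a small smooth perturbation of the Euclidean one, so De~Giorgi's $\eps$-regularity and Tamanini's improvement give that the reduced boundary $\partial^*\Omega$ is locally a $C^{1,\alpha}$ hypersurface for every $\alpha\in(0,1/2)$. The first variation formula for the constrained functional $\cP^g(\cdot)-\lambda|\cdot|_g$ then shows that in the weak sense the mean curvature of $\partial^*\Omega$ equals the Lagrange multiplier $\lambda$, and bootstrapping via Schauder estimates for the (quasilinear, elliptic) constant mean curvature equation upgrades the regularity to $C^\infty$. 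Compactness of $\partial\Omega$ follows since it is a closed subset of the compact manifold $M$.

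For the dimension of the singular set, I would run Federer's dimension reduction on blowups: at any point the blowups along a subsequence converge (using a monotonicity/density argument and the almost-minimality, which passes to Euclidean minimality in the blowup limit) to a minimizing cone in $\R^n$. Because the Simons--Bombieri--De Giorgi--Giusti theorem asserts that all minimizing cones in $\R^n$ with $n\le 7$ are hyperplanes, iterating the reduction bounds $\dim_{\cH}(\mathrm{sing}\,\Omega)\le n-8$. The main potential obstacle is organizing the volume-adjusting deformation cleanly enough to produce the constant $\Lambda$ uniformly — but since $M$ is compact and $\Omega$ is fixed, this is a routine localization, and the rest of the proof is a direct translation of the Euclidean argument via normal coordinates.
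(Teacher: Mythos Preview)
Your proposal is correct and follows exactly the standard route that the paper defers to: the paper does not give its own proof of this theorem but simply cites \cite{Mag}, and your sketch (direct method for existence, volume-fixing deformation to obtain $(\Lambda,r_0)$-minimality, De~Giorgi--Tamanini $\eps$-regularity with Schauder bootstrapping, and Federer dimension reduction using Simons' theorem) is precisely the argument developed in that reference.
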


Finally we recall the following 

\begin{lemma}\label{l:finite_components}
Let $\Sigma$ be an isoperimetric region in a closed Riemannian manifold $(M^n,g)$. There exists a number $L\in \N$, depending on $(M,g)$, such that the number of compact connected components of $\de \Sigma$ is bounded by $L$.
\end{lemma}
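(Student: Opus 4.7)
My plan is to bound the number $N$ of connected components $\Gamma_1,\dots,\Gamma_N$ of $\de\Sigma$ by combining a universal upper bound on the total perimeter $\cP^g(\Sigma)=\sum_i |\Gamma_i|$ with a uniform lower bound on the area of each $|\Gamma_i|$. By \cref{t:regularity}, each $\Gamma_i$ is a smooth closed hypersurface in $(M,g)$ with a common constant mean curvature $H$ (the Lagrange multiplier from the volume constraint). For the upper bound, $\cP^g(\Sigma) = \cI^g(V_0)\leq P_0 := \sup_{V\in(0,|M|_g)}\cI^g(V) < \infty$, since $\cI^g$ is continuous on $(0,|M|_g)$ and vanishes at the endpoints.

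For the lower bound on each $|\Gamma_i|$, I would apply the monotonicity formula for submanifolds with bounded mean curvature in $(M,g)$, which yields a radius $r_0=r_0(M,g)>0$ (depending only on the injectivity radius and curvature bounds of $M$) such that for every $x\in \Gamma_i$ and every $r\leq r_0$,
\[
|\Gamma_i\cap B_r(x)| \;\geq\; c_n\,r^{n-1}\,e^{-c_n|H|r},
\]
with $c_n>0$ a dimensional constant. Choosing $r=\min(r_0,|H|^{-1})$ gives $|\Gamma_i|\geq c_1\bigl(\max(r_0^{-1},|H|)\bigr)^{-(n-1)}$, and summing yields
\[
N \;\leq\; \frac{\cP^g(\Sigma)}{\min_i|\Gamma_i|} \;\leq\; c_1^{-1}\!\left(P_0\,r_0^{-(n-1)} + \cP^g(\Sigma)\,|H|^{n-1}\right).
\]

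It remains to control $\cP^g(\Sigma)\,|H|^{n-1}$ uniformly over all isoperimetric regions. Identifying $H$ (up to a dimensional constant) with the weak derivative $(\cI^g)'(V_0)$, this reduces to showing $\cI^g(V)\,|(\cI^g)'(V)|^{n-1}$ is bounded on $(0,|M|_g)$. On any compact subinterval this is immediate from the (locally Lipschitz) regularity of $\cI^g$. Near the endpoints I would use the Euclidean-type asymptotic $\cI^g(V)\sim n\,\omega_n^{1/n}\,V^{(n-1)/n}$ as $V\to 0^+$ (and symmetrically as $V\to |M|_g^-$), which forces the product to tend to a finite limit. This concludes the desired bound $N\leq L(M,g)$.

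The hard part will be the degenerate regime $V_0\to 0^+$ or $V_0\to |M|_g^-$: there $|H|\to\infty$ and the effective monotonicity radius $\min(r_0,1/|H|)$ collapses, so the per-component area bound degenerates. The saving grace is that $\cP^g(\Sigma)$ vanishes at exactly the matching rate, as encoded by the Euclidean asymptotics of the isoperimetric profile, so the product $\cP^g(\Sigma)\,|H|^{n-1}$ stays bounded.
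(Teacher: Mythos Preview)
Your overall strategy --- uniform upper bound on $\cP^g(\Sigma)$ combined with a uniform lower bound on the area of each boundary component via monotonicity --- is exactly the paper's strategy in the intermediate-volume regime. The difference lies in how the degenerate regime $V_0\to 0^+$ (or $V_0\to |M|_g^-$) is handled.

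The paper does not try to control $\cP^g(\Sigma)\,|H|^{n-1}$ uniformly in $V_0$. Instead it invokes \cite[Theorem~2.2]{MoJo} to say that there is $\delta>0$ such that for $V_0\in(0,\delta]\cup[|M|_g-\delta,|M|_g)$ the isoperimetric boundary is already connected (a perturbed geodesic sphere); only on the compact range $[\delta,|M|_g-\delta]$ does it then appeal to a uniform mean-curvature bound (\cref{lem:bdHiso}) together with monotonicity. This two-regime split sidesteps the endpoint difficulty entirely.

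Your endpoint analysis, by contrast, has a gap as written: the asymptotic $\cI^g(V)\sim n\,\omega_n^{1/n}V^{(n-1)/n}$ does \emph{not} by itself force $(\cI^g)'(V)\sim c\,V^{-1/n}$ --- asymptotics of a function do not in general imply asymptotics of its derivative. Equivalently, the identification ``$H\simeq(\cI^g)'(V_0)$'' together with the profile asymptotic does not yield the needed bound $|H|\le C\,V_0^{-1/n}$. To repair this you would need either a direct estimate on $|H|$ for small-volume isoperimetric regions (which follows from the same near-ball structure result underlying the profile asymptotic) or a differential-inequality/semiconcavity property of the profile controlling one-sided derivatives. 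Either fix essentially reintroduces the paper's input, so once patched your approach and the paper's are equivalent; the two-regime split is simply the more economical packaging.
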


\begin{proof}
By \cite[Theorem 2.2]{MoJo} there is $\delta > 0$ so that if $|\Sigma|_{g} \in (0,\delta] \cup [|M|_{g}-\delta,|M|_{g})$ then $\partial\Sigma$ is connected (and indeed a perturbation of a coordinate sphere). Now, by \cref{lem:bdHiso} if $|\Omega|_{g}\in(\delta,|M|_{g}-\delta)$, then $\partial\Omega$ has constant mean curvature $|H| \leq C = C(M,g,\delta)$. By the boundedness of $H$, the monotonicity formula applied to each component of $\partial\Omega$ implies that $\cP^{g}(\Omega) \geq c L$ for some constant $c=c(M,g,\delta)>0$. However, it is easy to see that $\cI^{g}(V) \leq I_{0}=I_{0}(M,g)$ for all $V$, by e.g., foliating $(M,g)$ by the level sets of a Morse function. This completes the proof. 
\end{proof}

This will be used in the proof of \cref{l:LvsF} (to prove that the kernel of an elliptic operator over $\de \Sigma$ has finite dimension) and again in the proof of \cref{thm:generic-isop} (to conclude that there are only countably many diffeomorphism types for minimizers $\Sigma$ of \eqref{e:isop}).

\section{Proof of \cref{l:quand_Loj} and \cref{thm:quantit_analytic}}\label{ss:Loj}

The proof is divided in two parts. First, using a modification of the argument in Simon's \cite{Simon0}, based on the Lyapunov-Schmidt reduction and \L ojasiewicz inequality for analytic function, we prove \cref{l:quand_Loj} for graphs close to a smooth minimizers of \eqref{e:isop}. This can be interpreted as a generalization of Fuglede's inequality to the non-integrable case. In the second part we combine this result with a modification of the selection principle inspired by \cite{CiLe} to conclude the proof of \cref{l:quand_Loj}. 

Throughout this section, $(M,g)$ will be fixed, so we will not make explicit the dependence on $g$. 


\subsection{Lyapunov-Schmidt reduction, integrability and strict stability} We start by recalling the following technical result whose proof is given in \cref{app:lyapunov_schmidt}. We denote by $K:=\ker(\nabla^2_\cB \cP(\Sigma))$. Notice that $\nabla^2_\cB \cP$ is the restriction of $J_\Sigma$, the Jacobi operator of $\Sigma$, to $T_0\cB(\Sigma)$, that is to functions with zero averages, and since $\de \Sigma$ is compact (by \cref{t:regularity}) and $J_\Sigma$ elliptic, $\dim K:=l<\infty$.

\begin{lemma}[Lyapunov-Schmidt reduction]\label{l:LS}
	Suppose $(M,g)$ is a $C^3$ manifold and $\Sigma$ is a smooth\footnote{By smooth here, we mean that the singular set is empty; then $\partial\Sigma$ will be as regular as $g$ allows.} minimizer of \eqref{e:isop}. There exists a neighborhood $U$ of $0$ in $T_0\cB(\Sigma)$ and a map $\Upsilon: K \cap U \rightarrow K^\perp \cap \cB(\Sigma)$, as regular as $g$, where the orthogonal complement is taken with respect to the $L^2$-inner product, such that 
	\begin{equation}\label{e:upsilonatzero}
	\Upsilon(0) = 0 \qquad\mbox{and}\qquad \nabla \Upsilon(0) = 0,
	\end{equation}
	and, in addition, 
	\begin{equation}\label{e:LSorth}
	\begin{cases}
	\pi_{K^\perp}(\nabla_{\cB(\Sigma)} \cP(\Sigma+\zeta+\Upsilon(\zeta)))=0&\forall \zeta \in K \cap U\\
	\pi_{K}(\nabla_{\cB(\Sigma)} \cP(\Sigma+\zeta+ \Upsilon(\zeta))) = \nabla P(\zeta)& \forall \zeta \in K \cap U,
	\end{cases}
	\end{equation}
	where $P\colon \R^l\to \R$ is the function defined by
	$$
	P(\zeta) = \cP(\Sigma+\zeta + \Upsilon(\zeta)) \quad \mbox{for every $\zeta \in K \cap U$}
	$$
	 and we identify $\zeta$ with the $l$-vector given by its coordinates on an orthonormal bases of the kernel $K$. Moreover we let $\mathcal L$ be the $l$-dimensional family defined by
	$$
	\mathcal L := \{\Sigma+\zeta + \Upsilon(\zeta)\mid \zeta  \in U \cap K\}\,.
	$$
	
	Now {assume that $g$ is analytic}. Then $P$ is analytic and satisfies the so-called \L ojasiewicz inequality at $0$ (see \cite[Corollary 4]{Feehan}),: there are constants $C,\delta>0, \gamma \geq 0$, depending on $\Sigma$, such that if $|\xi|<\delta$, then
	\begin{equation}\label{e:loj_p}
	P(\xi)-P(0)\geq C \left(\inf_{\{ \xi_0: \nabla P(\xi_0)=0\} } |\xi-\xi_0| \right)^{2+\gamma}\,.
	\end{equation} 
	For $W = \cB_{\delta}(\Sigma)$, we have
	\begin{equation}\label{e:critical_set}
	\cM \cap W:= \{\Sigma+\zeta + \Upsilon(\zeta)\mid \zeta \in U \cap K \quad \mbox{and} \quad \nabla P(\zeta) = 0\},
	\end{equation}
	and 
	\begin{equation}\label{e:p=cost}
	\tilde{\Sigma} \in \cM \cap W\qquad \mbox{implies}\qquad \cP(\tilde{\Sigma})=\cP(\Sigma)\,.  
	\end{equation}
	Moreover, for all $\zeta, \eta \in U\cap K$, there is a constant $C < \infty$, such that
	\begin{equation}\label{e:est_upsilon}
	\|\nabla\Upsilon(\zeta)[\eta]\|_{C^{2,\alpha}} \leq C\|\eta\|_{C^{0,\alpha}}.
	\end{equation}
	Finally, there exists a constant $C>0$ such that, if we denote with $u_\cL:=P_Ku+\Upsilon (P_K u)$, then the following key estimate holds
	\begin{equation}\label{e:stabby}
	\cP(\Sigma+u)-\cP(\Sigma+u_\cL)\geq C\, \left\| u- u_\cL \right\|^2_{W^{1,2}} \qquad\forall u\in \cB_{\delta}(\Sigma).
	\end{equation}
\end{lemma}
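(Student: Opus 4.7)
The plan is to follow Simon's Lyapunov--Schmidt scheme from \cite{Simon0}, adapted to the volume-constrained setting, and combine it with the finite-dimensional \L ojasiewicz inequality \cite{Feehan} in the analytic case together with a stability-based coercivity argument. The key linear object is $L := \nabla^2_\cB \cP(\Sigma)$, the volume-constrained Jacobi operator acting on the zero-average slice $T_0\cB(\Sigma)\subset L^2(\partial\Sigma)$. It is self-adjoint and elliptic, so by \cref{t:regularity} and \cref{l:finite_components} its kernel $K$ is finite dimensional, and $L|_{K^\perp \cap T_0\cB(\Sigma)}$ is an isomorphism onto $K^\perp$. First I would set $F(\zeta,w) := \pi_{K^\perp}\bigl(\nabla_\cB\cP(\Sigma+\zeta+w)\bigr)$ on a small neighbourhood of the origin in $K\times K^\perp$. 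Because $F(0,0)=0$ (as $\Sigma$ is critical for the constrained perimeter) and $D_wF(0,0)=L|_{K^\perp}$ is invertible, the implicit function theorem---its analytic version when $g$ is analytic---produces the desired $\Upsilon\colon K\cap U\to K^\perp$ satisfying the first equation in \eqref{e:LSorth}, with regularity matching that of $g$.

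Next, I would verify the auxiliary properties. The identity $\Upsilon(0)=0$ is immediate, while $\nabla\Upsilon(0)=0$ follows by differentiating $F(\zeta,\Upsilon(\zeta))\equiv 0$ at the origin in the direction $\eta\in K$: this yields $L|_{K^\perp}\nabla\Upsilon(0)[\eta] = -\pi_{K^\perp}(L\eta) = 0$ because $\eta\in K$. The second equation in \eqref{e:LSorth} is a chain rule computation for $P(\zeta) = \cP(\Sigma+\zeta+\Upsilon(\zeta))$, in which the pairing $\langle \nabla_\cB\cP(\Sigma+\zeta+\Upsilon(\zeta)),\nabla\Upsilon(\zeta)[\eta]\rangle$ vanishes because $\nabla\Upsilon(\zeta)[\eta]\in K^\perp$ and the $K^\perp$-part of $\nabla_\cB\cP$ is zero by the first equation. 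The description \eqref{e:critical_set} of critical points then follows from uniqueness in the implicit function theorem, and \eqref{e:p=cost} encodes that elements of $\cM$ share the common perimeter $\cI(V_0)=\cP(\Sigma)$. In the analytic case $P\colon K\cap U\to\R$ is a real analytic function on $\R^l$, so the \L ojasiewicz inequality of \cite[Corollary 4]{Feehan} immediately delivers \eqref{e:loj_p}. The Schauder-type estimate \eqref{e:est_upsilon} follows by differentiating the implicit relation and applying standard elliptic regularity to $L|_{K^\perp}\bigl(\nabla\Upsilon(\zeta)[\eta]\bigr) = -\pi_{K^\perp}\bigl(\nabla^2_\cB\cP(\Sigma+\zeta+\Upsilon(\zeta))[\eta]\bigr)$, just as in Simon's original argument.

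The main analytic hurdle is the coercivity bound \eqref{e:stabby}. Writing $v := u - u_\cL$, by construction $v\in K^\perp$, since both $P_{K^\perp}u$ and $\Upsilon(P_Ku)$ live in $K^\perp$. Taylor-expanding the constrained perimeter at $\Sigma+u_\cL$,
\[
\cP(\Sigma+u) - \cP(\Sigma+u_\cL) = \langle\nabla_\cB\cP(\Sigma+u_\cL),v\rangle + \tfrac{1}{2}\langle \nabla^2_\cB\cP(\Sigma+u_\cL)v,v\rangle + o(\|v\|^2),
\]
the linear term vanishes: its $K$-component pairs to zero with $v\in K^\perp$, and its $K^\perp$-component is zero by the first equation in \eqref{e:LSorth} applied at $\zeta = P_Ku$. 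Since $\Sigma$ is a stable minimiser with kernel precisely $K$, the operator $L$ has a strictly positive spectral gap on $K^\perp\cap T_0\cB(\Sigma)$, giving $\langle Lv,v\rangle \geq c\|v\|_{L^2}^2$; combining this with the identity $\langle Lv,v\rangle = \|\nabla v\|_{L^2}^2 - \int q v^2$ and a G\r arding-type rearrangement upgrades the estimate to $\langle Lv,v\rangle \geq c'\|v\|_{W^{1,2}}^2$, and continuity in $u_\cL$ preserves this for small $u_\cL$. The subtleties I expect to be hardest are (i) consistently tracking the volume constraint so that the Lagrange multiplier bookkeeping does not contaminate either the Lyapunov--Schmidt reduction or the coercivity step, and (ii) absorbing the cubic remainder, which is a priori controlled only in a stronger norm such as $C^{2,\alpha}$, into the weaker $W^{1,2}$-coercive quadratic term; this is achieved by interpolation together with the smallness of $\|u\|_{C^{2,\alpha}}\leq\delta$ on $\cB_\delta(\Sigma)$.
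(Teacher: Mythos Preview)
Your approach is essentially that of the paper---Simon's Lyapunov--Schmidt scheme via the implicit (equivalently, inverse) function theorem, followed by a Taylor expansion and spectral-gap argument for \eqref{e:stabby}---and your handling of \eqref{e:upsilonatzero}, \eqref{e:LSorth}, \eqref{e:est_upsilon}, \eqref{e:loj_p}, and \eqref{e:stabby} is correct.

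There is, however, a genuine gap in your treatment of \eqref{e:p=cost}. You write that it ``encodes that elements of $\cM$ share the common perimeter $\cI(V_0)=\cP(\Sigma)$,'' but this conflates \emph{minimizers} with \emph{critical points}. The set on the right-hand side of \eqref{e:critical_set} consists of all nearby \emph{critical points} of the constrained perimeter (this is exactly what uniqueness in the implicit function theorem delivers), and a priori a nearby critical point need not be an isoperimetric region: its perimeter could exceed $\cI(V_0)$. Showing that every such critical point satisfies $\cP(\tilde\Sigma)=\cP(\Sigma)$ is precisely the content of \eqref{e:p=cost}, and it is not automatic---in the merely smooth category one can have critical points with distinct critical values accumulating at a minimizer. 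The paper closes this gap by invoking the \emph{gradient} form of the \L ojasiewicz inequality for the analytic function $P$, namely $|P(\xi)-P(0)|^{1-\theta}\le C\,|\nabla P(\xi)|$ for $\xi$ near $0$, which immediately forces $P(\xi)=P(0)$ whenever $\nabla P(\xi)=0$. You already cite \cite{Feehan} for the distance-to-critical-set form \eqref{e:loj_p}, so the fix is short, but the step is essential and genuinely uses analyticity.
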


 \begin{definition}[Integrability and Strict Stability]\label{d:int}
	We say that a minimizer $\Sigma$ of \eqref{e:isop} is \emph{integrable} if every Jacobi field $u\in K=\ker(\nabla^2_\cB\cP(\Sigma))$ is the infinitesimal generator of a one parameter family of critical points of $\cP$ in $\cB_{\delta}(\Sigma)$; that is, if for every element $u\in K$ there exists a $1$-parameter family of diffeomorphisms $(\phi_t)_{t\in(-1,1)}$ such that $\phi_0 = Id$, $\frac{d}{dt}\phi_t=u(\phi_t)$, and
	$$
	(\phi_t)_\sharp(\Sigma)\in \cB_{\delta}(\Sigma)\quad \mbox{ is a critical point of $\cP$ in $\cB_{\delta}(\Sigma)$ for every $t\in(-1,1)$}\,.
	$$
	
	We say that  a minimizer $\Sigma$ of \eqref{e:isop} is \emph{strictly stable} if there exists $C>0$, depending on $\Sigma$, such that 
	\begin{equation}\label{e:strict_stab}
	\nabla^2_\cB\cP(\Sigma)[v,v] \geq C\, \|v\|^2_{W^{1,2}(\Sigma)} \qquad\mbox{for every $v\in T_0\cB_{\delta}(\Sigma)$}\,.
	\end{equation}
\end{definition}

In this case we can refine the Lyapunov-Schmidt decomposition to obtain the following lemma.

\begin{lemma}[Lyapunov-Schmidt and integrability]\label{l:LS+int}
	Under the same assumptions of \cref{l:LS}, the following holds.
	\begin{itemize}
		\item[(i)] If $g$ is analytic, then $\Sigma$ is integrable if and only if the function $P$ of \cref{l:LS+int} is constant. In particular if $\Sigma$ is integrable, then
		$$
		\cM \cap W\equiv \{\Sigma+\zeta + \Upsilon(\zeta)\mid \zeta  \in U \cap K\}\,.
		$$
		and moreover
		\begin{equation}\label{e:int}
		\cP(\Sigma+u)-\cP(\Sigma)\geq C\, \left\| u- u_\cL \right\|^2_{W^{1,2}} \qquad\forall u\in \cB_{\delta}(\Sigma).
		\end{equation}
		\item[(ii)] If $\Sigma$ is strictly stable and the metric $g\in C^3$, then $\mathcal L=\{\Sigma\}$ and moreover
		\begin{equation}\label{e:strict}
		\cP(\Sigma+u)-\cP(\Sigma)\geq C\, \left\| u \right\|^2_{W^{1,2}} \qquad\forall u\in \cB_{\delta}(\Sigma).
		\end{equation}
	\end{itemize} 
\end{lemma}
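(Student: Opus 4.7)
I would prove (ii) first, since it is essentially immediate, and then the converse direction of (i), leaving the forward direction of (i) as the main step.

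\textbf{Part (ii).} Strict stability \eqref{e:strict_stab} forces $K = \ker(\nabla^2_\cB \cP(\Sigma)) = \{0\}$, because any $v\in K$ satisfies $\nabla^2_\cB\cP(\Sigma)[v,v] = 0$ and hence $v=0$ by \eqref{e:strict_stab}. With $K=\{0\}$ the family collapses: $\cL = \{\Sigma + 0 + \Upsilon(0)\} = \{\Sigma\}$. For any $u\in\cB_\delta(\Sigma)$ the L--S decomposition becomes $u_\cL = P_Ku + \Upsilon(P_Ku) = 0$, so substituting $u - u_\cL = u$ into the key estimate \eqref{e:stabby} immediately yields \eqref{e:strict}.

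\textbf{Part (i), converse and inequality.} Assume $P$ is constant on $K\cap U$. Then every $\zeta \in K\cap U$ satisfies $\nabla P(\zeta) = 0$, so by \eqref{e:critical_set} we have $\cM\cap W = \cL$, and \eqref{e:p=cost} gives $\cP \equiv \cP(\Sigma)$ on $\cL$. For any $u\in K$, the curve $t \mapsto \Sigma + tu + \Upsilon(tu)$ is a smooth one--parameter family of critical points with initial velocity $u + \nabla\Upsilon(0)[u] = u$ (using \eqref{e:upsilonatzero}); realizing the associated normal deformation of $\partial\Sigma$ as an ambient one--parameter family of diffeomorphisms $\phi_t$ via cutoff in a tubular neighborhood of $\partial\Sigma$ verifies the integrability definition. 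For the inequality \eqref{e:int}: since $\Sigma + u_\cL \in \cL$ we have $\cP(\Sigma + u_\cL) = \cP(\Sigma)$, and substituting into \eqref{e:stabby} gives \eqref{e:int}.

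\textbf{Part (i), forward direction (main step).} Assume $\Sigma$ is integrable and consider
\[
Z := \{\zeta \in K\cap U : \nabla P(\zeta) = 0\},
\]
which is a real analytic subvariety of the open neighborhood $K\cap U$ of $0$ in $K\cong\R^l$, using that $P$ (and hence $\nabla P$) is analytic by Lemma \ref{l:LS}. Integrability together with the L--S bijection \eqref{e:critical_set} produces, for every $u\in K$, an analytic curve $\zeta^{(u)}_t \in Z$ with $\zeta^{(u)}_0 = 0$ and $\partial_t\zeta^{(u)}_t|_{t=0} = u$ (the initial velocity computation again uses $\nabla\Upsilon(0)=0$). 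Hence the tangent cone $C_0 Z\subset K$ contains every direction, so $C_0 Z = K$. Since for a real analytic subvariety the tangent cone at a point has the same local dimension as the variety, $\dim_0 Z = l$; being a closed analytic subvariety of full dimension $l$ in the $l$--dimensional ambient $K\cap U$, $Z$ must contain an open neighborhood of $0$. Thus $\nabla P \equiv 0$ in a neighborhood of $0$ and, shrinking $\delta$ if necessary, $P$ is constant.

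The main obstacle is this last real--analytic--geometry step: upgrading ``critical tangent curves in every direction'' to ``$Z$ fills a whole neighborhood in $K$.'' Once $P$ is known to be constant, the remaining statements of (i) are immediate from \eqref{e:critical_set}, \eqref{e:p=cost}, and \eqref{e:stabby}.
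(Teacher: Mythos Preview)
Your treatment of part (ii) and of the converse direction and inequality in part (i) is essentially identical to the paper's: strict stability forces $K=\{0\}$, so $u_\cL=0$ and \eqref{e:stabby} gives \eqref{e:strict}; when $P$ is constant, $\cL=\cM\cap W$ and $\cP(\Sigma+u_\cL)=\cP(\Sigma)$, so \eqref{e:stabby} gives \eqref{e:int}.

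For the forward direction of (i) you take a genuinely different, more geometric route. The paper argues by direct Taylor expansion: if $P$ is not constant, write $P-P(0)=P_p+P_R$ with $P_p$ the lowest nonzero homogeneous part, pick a single $\phi\in K$ with $\nabla P_p(\phi)\neq 0$, feed the integrability curve in direction $\phi$ into $\nabla P=0$, and read off the contradiction at order $s^{p-1}$. Your argument instead looks at the analytic variety $Z=\{\nabla P=0\}$, uses the integrability curves in \emph{every} direction to fill out the tangent cone $C_0Z=K$, and then invokes the equality $\dim C_0Z=\dim_0 Z$ for real analytic germs to conclude $Z$ is open near $0$. Unwound, your argument is the same leading--term computation applied componentwise to $\partial_iP$; the paper's version is more self--contained because it avoids citing the tangent--cone dimension fact, while yours makes the geometric picture (critical set has full tangent cone, hence full dimension) more transparent.

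One small correction: the curves $\zeta^{(u)}_t=P_K\eta_t$ you obtain from the integrability flows are smooth, not analytic, since the definition of integrability only posits smooth one--parameter families. This is harmless for your argument---you only need $C^1$ curves to produce tangent directions---but you should not claim analyticity of $\zeta^{(u)}_t$.
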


The proof of this fact is also contained in \cref{app:lyapunov_schmidt}.

\subsection{\L ojasiewicz inequality as a generalization of Fuglede's inequality}In this subsection we prove the main estimate of the paper.

\begin{lemma}[\L ojasiewicz meets Fuglede]\label{l:LvsF} Let $\Sigma$ be a smooth embedded orientable minimizer of \eqref{e:isop} on a manifold $(M,g)$. The following conclusions hold.
	
	If $g$ is analytic, then there exist constants $\delta(\Sigma), C(\Sigma),\gamma(\Sigma)>0$, all depending on $\Sigma,M,g$, such that
	\begin{equation}\label{e:Loj}
	\cP(\Sigma+u)-\cP(\Sigma)\geq C(\Sigma) \left(\alpha_{\delta(\Sigma)}(\Sigma+u)\right)^{2+\gamma(\Sigma)} \qquad \forall u \in \cB_\delta(\Sigma)\,.
	\end{equation} 
	
	If $g$ is analytic and $\Sigma$ is integrable, then we can take $\gamma\equiv 0$ in the above estimate, that is
	\begin{equation}\label{e:use_Loj}
	\cP(\Sigma+u)-\cP(\Sigma)\geq  C(\Sigma)\, \left( \alpha_{\delta(\Sigma)}(\Sigma+u) \right)^{2}\,\qquad \forall u \in \cB_\delta(\Sigma)\,.
	\end{equation}
	
	If $g\in C^3$ and $\Sigma$ is strictly stable, then the following estimate holds
	\begin{equation}\label{e:use_ss}
	\cP(\Sigma+u)-\cP(\Sigma)\geq  C(\Sigma)\, \left|(\Sigma+u)\Delta \Sigma\right|^2\,\qquad \forall u \in \cB_\delta(\Sigma)\,.
	\end{equation}
\end{lemma}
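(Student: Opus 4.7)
The plan is to combine the two estimates supplied by the Lyapunov--Schmidt reduction (\cref{l:LS}): the transverse stability estimate \eqref{e:stabby} in directions orthogonal to the kernel $K$, and the \L ojasiewicz inequality \eqref{e:loj_p} along $K$. Then translate the resulting graphical bound into one on the Fraenkel asymmetry via the area formula. I treat the three cases in turn, handling the analytic case first and then reducing the integrable and strictly stable cases to easier corollaries of \cref{l:LS+int}.

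\emph{The analytic case.} Fix $u \in \cB_\delta(\Sigma)$ and set $u_\cL := P_K u + \Upsilon(P_K u)$, so that $\Sigma+u_\cL \in \cL$. Decompose
\[
\cP(\Sigma+u) - \cP(\Sigma) = \bigl[\cP(\Sigma+u) - \cP(\Sigma+u_\cL)\bigr] + \bigl[P(P_K u) - P(0)\bigr].
\]
The first bracket is at least $C\|u - u_\cL\|_{W^{1,2}}^2$ by \eqref{e:stabby}. For the second, \eqref{e:loj_p} furnishes a critical point $\zeta_0$ of $P$ nearly achieving the infimum, so that
\[
\cP(\Sigma+u_\cL) - \cP(\Sigma) \geq C\,\lvert P_K u - \zeta_0\rvert^{2+\gamma}.
\]
Since $\zeta_0=0$ is always critical, $\lvert\zeta_0\rvert$ is small when $u$ is; by \eqref{e:critical_set}--\eqref{e:p=cost}, $\tilde\Sigma := \Sigma + \zeta_0 + \Upsilon(\zeta_0)$ is then an isoperimetric region of volume $V_0$ admissible for $\alpha_\delta(\Sigma+u)$. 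A standard area-formula computation gives $|(\Sigma+u)\Delta(\Sigma+v)|_g \leq C\|u-v\|_{L^1(\partial\Sigma)}$ for $u,v$ small in $C^0$; combined with the Lipschitz bound \eqref{e:est_upsilon} on $\Upsilon$ and equivalence of norms on the finite-dimensional $K$, this yields
\[
\alpha_\delta(\Sigma+u) \leq |(\Sigma+u)\Delta\tilde\Sigma|_g \leq C\bigl(\|u-u_\cL\|_{L^1} + |P_K u - \zeta_0|\bigr).
\]
Raising to the $(2+\gamma)$-power via $(a+b)^p \leq 2^{p-1}(a^p+b^p)$, using the embedding $W^{1,2}(\partial\Sigma)\hookrightarrow L^1(\partial\Sigma)$, and the smallness $\|u-u_\cL\|_{L^1} \leq C\delta$ to absorb the extra $\gamma$-power, I obtain
\[
\alpha_\delta(\Sigma+u)^{2+\gamma} \leq C\delta^\gamma\|u-u_\cL\|_{W^{1,2}}^2 + C|P_K u - \zeta_0|^{2+\gamma} \leq C'\bigl(\cP(\Sigma+u) - \cP(\Sigma)\bigr)
\]
for $\delta$ sufficiently small, which is \eqref{e:Loj}.

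\emph{Integrable and strictly stable cases.} If $\Sigma$ is integrable, \cref{l:LS+int}(i) replaces the split above with the single direct estimate $\cP(\Sigma+u)-\cP(\Sigma)\geq C\|u-u_\cL\|_{W^{1,2}}^2$ and asserts that $\Sigma+u_\cL\in\cL$ is itself isoperimetric. Using $\tilde\Sigma=\Sigma+u_\cL$ as the competitor in $\alpha_\delta$, the area-formula bound yields $\alpha_\delta \leq C\|u-u_\cL\|_{L^1} \leq C\|u-u_\cL\|_{W^{1,2}}$, proving \eqref{e:use_Loj}. In the strictly stable case, \cref{l:LS+int}(ii) gives $u_\cL=0$ and $\cP(\Sigma+u)-\cP(\Sigma)\geq C\|u\|_{W^{1,2}}^2$; combined with $|(\Sigma+u)\Delta\Sigma|_g \leq C\|u\|_{L^1} \leq C\|u\|_{W^{1,2}}$ this yields \eqref{e:use_ss}.

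\emph{Main obstacle.} The delicate step is the exponent matching in the analytic case: the transverse stability contributes quadratically while \L ojasiewicz contributes with the degraded exponent $2+\gamma$. Reconciling these into a single lower bound on $\alpha_\delta^{2+\gamma}$ hinges on the smallness of $\delta$ and the finite dimensionality of $K$ (giving equivalence of norms and Lipschitz control of $\Upsilon$), which allow the $\|u-u_\cL\|_{W^{1,2}}^2$ term to dominate the corresponding $(2+\gamma)$-power once $\delta$ is chosen small.
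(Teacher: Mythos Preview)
Your argument is correct and follows essentially the same route as the paper: decompose $\cP(\Sigma+u)-\cP(\Sigma)$ into the transverse part controlled by \eqref{e:stabby} and the kernel part controlled by the finite-dimensional \L ojasiewicz inequality \eqref{e:loj_p}, then recombine via the triangle inequality and the smallness of $\delta$ to match the exponents. The only cosmetic difference is that the paper first proves the abstract inequality $\inf_{\tilde u\in\cM\cap\cB_\delta}\|u-\tilde u\|_{W^{1,2}}\geq C\,\alpha_\delta(\Sigma+u)$ and then works entirely in $W^{1,2}$, whereas you pick a specific competitor $\tilde\Sigma=\Sigma+\zeta_0+\Upsilon(\zeta_0)$ and bound $\alpha_\delta$ directly via the area formula; these are equivalent bookkeepings of the same estimate.
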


\begin{proof} We start by observing that
	\begin{equation}\label{e:W12_measure}
	\inf_{\tilde{u}\in \cM\cap \cB_\delta(\Sigma) }\|u- \tilde u\|_{W^{1,2}}\geq C\inf_{\tilde{u}\in \cM\cap \cB_\delta(\Sigma) }\|u- \tilde u\|_{L^1} \geq C \alpha_\delta(\Sigma+u)
	\end{equation}
	where the first inequality is Poincar\'e inequality and the second follows from the fact that $u,\tilde{\Sigma} \in \cB_{\delta}(\Sigma)$ implies that $u$ has small $C^{1,\alpha}$ norm when reparametrized over $\tilde{\Sigma}$. 	

	Now we prove \eqref{e:Loj}. Let $u\in \cB(\Sigma)$ and $u_\cL\in \cL$ be as in \cref{l:LS} and write
	\begin{equation}\label{e:step1}
	\cP(\Sigma+u)-\cP(\Sigma)= \underbrace{\cP(\Sigma+u)-\cP(\Sigma+u_\cL)}_{=:I^\perp}+\underbrace{\cP(\Sigma+ u_\cL)-\cP(\Sigma)}_{=:I_\cL}\,,
	\end{equation}
	For the first term we simply use \eqref{e:stabby}, therefore to conclude we only need to estimate $I_\cL$. We distinguish three cases.
	
	\medskip
	
	\noindent \textbf{$\Sigma$ is strictly stable.} In this case $u_\cL\equiv 0$ and \eqref{e:use_ss} follows immediately from  \eqref{e:strict} and \eqref{e:W12_measure}.
	
	\medskip
	
	\noindent \textbf{$\Sigma$ is integrable.} In this case we have by \eqref{e:p=cost} that $I_\cL=0$, therefore we have by \eqref{e:stabby} and \eqref{e:step1}, and the fact that $u_\cL\in \cL\cap W=\cM\cap W$, that
	\begin{align*}
	\cP(\Sigma+u)-\cP(\Sigma)\geq C\, \|u-u_\cL\|_{W^{1,2}}^2 \geq C \left(\inf_{\bar{u}\in \cM\cap \cB_\delta(\Sigma) }\|u-\bar{u}\|_{W^{1,2}}\right)^{2}\,.
	\end{align*}
	Combined with \eqref{e:W12_measure}, this proves \eqref{e:use_Loj}.
	
	\medskip
	
	\noindent \textbf{$\Sigma$ is not integrable.} We identify $u_\cL$ with $\xi \in \R^l$, via $u_\cL=\xi+\Upsilon \xi$ where $\xi$ is the projection of $u$ onto the kernel of $\nabla^2_\cB$. Using the definition of $P$ and \eqref{e:loj_p} we get
	\begin{align}\label{e:parallel}
	I_\cL&=P(u_\cL)-P(0)\geq C\,\left(\inf_{\{ \xi_0:\nabla P(\xi_0)=0\} } |\xi-\xi_0| \right)^{2+\gamma}\\\notag
		&\geq C\, \left(\inf_{\tilde u\in \cM\cap \cB_\delta(\Sigma) }\|u_\cL-\tilde u\|_{W^{1,2}}\right)^{2+\gamma}\,,
	\end{align}
	where in the last inequality we used standard estimates on elements of the kernel of the linear elliptic operator $\nabla^2_\cB$ and \eqref{e:est_upsilon}. To conclude, we combine the inequalities \eqref{e:stabby} and \eqref{e:parallel}, with the simple fact that $a^{2+\gamma}+b^{2+\gamma}\geq C(\gamma) (a+b)^{2+\gamma} $, for every $a,b>0$, to conclude that
	\begin{align*}
	\cP(\Sigma+u)-\cP(\Sigma)
	&\geq  C\, \|u-u_\cL\|_{W^{1,2}}^2 +C\, \left(\inf_{\tilde u\in \cM\cap \cB_\delta(\Sigma) }\|u_\cL-\tilde u\|_{W^{1,2}}\right)^{2+\gamma}\\
	&\geq C\, \left( \|u-u_\cL\|_{W^{1,2}} + \inf_{\tilde u\in \cM\cap \cB_\delta(\Sigma) }\|u_\cL- \tilde u\|_{W^{1,2}} \right)^{2+\gamma}\\
	&\geq C \left(\inf_{\tilde u\in \cM\cap \cB_\delta(\Sigma) }\|u-\tilde u\|_{W^{1,2}}\right)^{2+\gamma} 
	\end{align*}
	which, together with \eqref{e:W12_measure}, concludes the proof of the proposition.
\end{proof}

\subsection{Proof of \cref{l:quand_Loj}} 
Let $\Sigma, V_0$ be as in \cref{l:quand_Loj}. Let $\delta(\Sigma),C(\Sigma)>0$ and $\gamma:=\gamma(\Sigma) \geq 0$ be the constants given by \cref{l:LvsF}, depending on $\Sigma$. 

Given a set of finite perimeter $E \subset \cA_{V_0}\cap W_\delta$, where
\begin{equation}
W_\delta:=\{ F\subset M\,:\, \chi_F\in \mathrm{BV}(M),\,\|\chi_F-\chi_\Sigma\|_{L^1}\leq \delta  \}\,,
\end{equation} 
we can define the associated ``energy" relative to $\Sigma$ 
\begin{equation}\label{e:QforE} 
\cQ(E,\gamma):= \inf \left\{\liminf_k \frac{\delta \cP(F_k)}{\alpha_\delta(F_k)^{2+\gamma}} \mid \{F_k\}_k \subset \cA_{V_0},\: \alpha_\delta(F_k) > 0,\: |F_k \Delta E| \rightarrow 0\right\}
\end{equation} 
where
\[
\delta\cP(F_{k}) = \cP(F_{k}) - \cI(V_{0})
\]
is the isoperimetric defect.

With $\gamma > 0$ fixed as above,  assume that there is a sequence of ``bad" sets $E_k \in \cA_{V_0}\cap W_\delta$ such that 
\[
\delta \cP(E_k) \leq \frac{1}{k}\alpha_\delta(E_k)^{2+\gamma}.
\]

The trivial bound of $\alpha_\delta(E_k) \leq 2V_0$ implies that $\delta\cP(E_k) \rightarrow 0$. Note, by compactness in the space of functions of bounded variation and the lower-semicontinuity of perimeter, passing to a subsequence, we can guarantee that $E_k \rightarrow \tilde{\Sigma} \in \cM\cap W_\delta$ in the sense of sets of finite perimeter and, therefore, $\alpha_\delta(E_k) \rightarrow 0$ as well. We have just shown that the (local) quantitative isoperimetric inequality is equivalent to the statement that \begin{equation}\label{e:lowerboundforsigma} 
\inf_{\tilde{\Sigma} \in \cM\cap W_\delta} \cQ(\tilde{\Sigma},\gamma) > 0.
\end{equation}

In order to prove \eqref{e:lowerboundforsigma} we are going to use the following version of the selection principle of \cite{CiLe}. 

\begin{proposition}[Selection Principle]\label{p:sel_pri} Assume that $\cQ(\Sigma,\gamma) < \infty$. There exists a sequence of sets of finite perimeter $E_k \subset M$ with the following properties 
	\begin{itemize}
		\item[(i)] $\alpha_\delta(E_k)>0$ as $k\to \infty$;
		\item[(ii)] $\cQ(E_k,\gamma)\to \inf_{\tilde{\Sigma} \in \cM\cap W_\delta} \cQ(\tilde{\Sigma},\gamma) $ as $k\to \infty$;
		\item[(iii)] there exists a smooth $\Sigma_0 \in \cM\cap W_\delta$ such that 
		$$
		\inf_{\tilde{\Sigma} \in\cM\cap W_\delta} \cQ(\tilde \Sigma,\gamma) = \cQ(\Sigma_0, \gamma)
		$$ 
		and functions $u_k\in C^{1,\alpha}(\partial \Sigma_0)$ such that $E_k:=\Sigma_0+u_k$ and $\|u_k\|_{C^{1,\alpha}}\to 0$ as $k\to \infty$.
	\end{itemize}
\end{proposition}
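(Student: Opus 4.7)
The plan is to adapt the selection principle of Cicalese--Leonardi to the Riemannian setting by realizing $E_k$ as minimizers of a penalized perimeter functional. The penalty is designed so that (a) the $E_k$ are uniform quasi-minimizers of perimeter (so Almgren/De Giorgi regularity applies), and (b) the ratio $\delta\cP(E_k)/\alpha_\delta(E_k)^{2+\gamma}$ is forced toward $Q_0:=\inf_{\tilde\Sigma\in\cM\cap W_\delta}\cQ(\tilde\Sigma,\gamma)$. The hypothesis $\cQ(\Sigma,\gamma)<\infty$ ensures $Q_0<\infty$, and the dimensional bound $2\leq n\leq 7$ combined with \cref{t:regularity} guarantees that any isoperimetric limit in $\cM\cap W_\delta$ is smooth, enabling graphical reparametrization of the $E_k$.

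By the definition of $Q_0$ and a diagonal argument, extract $\tilde F_k\in\cA_{V_0}\cap W_\delta$ with $\eps_k:=\alpha_\delta(\tilde F_k)\downarrow 0$ and $\delta\cP(\tilde F_k)\leq (Q_0+o(1))\eps_k^{2+\gamma}$. For each $k$, minimize
\[
\mathcal{F}_k(F):=\cP(F)+\Lambda_0\big||F|-V_0\big|+\Lambda_k\big(\alpha_\delta(F)-\eps_k\big)^2
\]
over $\{F:\chi_F\in\mathrm{BV}(M)\}\cap W_\delta$, with $\Lambda_0$ a fixed constant larger than the uniform mean-curvature bound for isoperimetric regions (\cref{lem:bdHiso}), so the volume constraint is effectively enforced, and $\Lambda_k\to\infty$ chosen so that $\Lambda_k^{-1/2}\eps_k^{(2+\gamma)/2}=o(\eps_k)$. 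Existence of a minimizer $E_k$ follows from lower semi-continuity of perimeter, $L^1$-continuity of $\alpha_\delta$ and volume, and BV compactness. Comparing $\mathcal{F}_k(E_k)\leq \mathcal{F}_k(\tilde F_k)=\cP(\tilde F_k)$ gives $\delta\cP(E_k)\to 0$ and $\Lambda_k(\alpha_\delta(E_k)-\eps_k)^2\leq \delta\cP(\tilde F_k)$, whence $\alpha_\delta(E_k)=\eps_k(1+o(1))>0$, proving (i).

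The penalty terms are Lipschitz under symmetric difference with total effective constant $\Lambda_0+2\Lambda_k|\alpha_\delta(E_k)-\eps_k|=O(1)$ by the pinning above, so each $E_k$ is a $(\Lambda,r_0)$-quasi-minimizer of perimeter with $\Lambda,r_0$ independent of $k$. Standard regularity combined with $n\leq 7$ then yields uniform $C^{1,\alpha}$ bounds on $\partial E_k$. BV compactness produces a subsequential $L^1$-limit $\Sigma_0$; lower semi-continuity of $\cP$ and $\delta\cP(E_k)\to 0$ force $\Sigma_0\in\cM\cap W_\delta$, hence smooth by \cref{t:regularity}. The uniform $C^{1,\alpha}$ bounds upgrade $L^1$-convergence to graphical $C^{1,\alpha}$-convergence of $\partial E_k$ over $\partial\Sigma_0$, yielding eventually $E_k=\Sigma_0+u_k$ with $\|u_k\|_{C^{1,\alpha}}\to 0$, proving (iii). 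The identification $\cQ(\Sigma_0,\gamma)=Q_0$ follows by inserting the constant sequence $F_k:=E_k$ into the definition of $\cQ(\Sigma_0,\gamma)$ and combining with $\cQ(\Sigma_0,\gamma)\geq Q_0$ from the infimum.

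For (ii), the upper bound $\cQ(E_k,\gamma)\leq \delta\cP(E_k)/\alpha_\delta(E_k)^{2+\gamma}\to Q_0$ comes from the constant sequence in the definition of $\cQ$. The matching lower bound uses a diagonalization: for each $k$ pick $F_j^{(k)}\to E_k$ with ratios approaching $\cQ(E_k,\gamma)$, and choose $j(k)$ so that $|F_{j(k)}^{(k)}\Delta E_k|<1/k$; the resulting sequence converges to $\Sigma_0$ in $L^1$, so its liminf is at least $\cQ(\Sigma_0,\gamma)=Q_0$. The chief technical obstacle is the divergent parameter $\Lambda_k$: the nonlocal penalty $\Lambda_k(\alpha_\delta(F)-\eps_k)^2$ has naive Lipschitz constant of order $\Lambda_k|\alpha_\delta(E_k)-\eps_k|$ under localized perturbations, so making the pinning $|\alpha_\delta(E_k)-\eps_k|=O(\Lambda_k^{-1/2}\eps_k^{(2+\gamma)/2})$ genuinely quantitative via the first-order conditions at the minimum is what keeps the quasi-minimality constant uniformly bounded, and is the heart of the argument.
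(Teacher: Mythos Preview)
Your overall strategy---penalize, minimize, show quasi-minimality, apply regularity---matches the paper's, but you penalize a different functional, and this creates a real gap in the quasi-minimality step. The paper (following Cicalese--Leonardi) minimizes
\[
\cQ_j(E,\gamma)=\frac{\delta\cP(E)}{\alpha(E)^{2+\gamma}}+\Bigl(\frac{\alpha(E)}{\alpha(W_j)}-1\Bigr)^{2},
\]
i.e.\ it penalizes the \emph{ratio} and uses a fixed (not diverging) coefficient on the $\alpha$-pinning. You instead penalize \emph{perimeter} with a diverging weight $\Lambda_k$ on $(\alpha_\delta(F)-\eps_k)^2$.

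The issue is your claim that the penalty is ``Lipschitz under symmetric difference with total effective constant $\Lambda_0+2\Lambda_k|\alpha_\delta(E_k)-\eps_k|=O(1)$''. That quantity is only the derivative of $t\mapsto\Lambda_k(t-\eps_k)^2$ at $t=\alpha_\delta(E_k)$; the actual Lipschitz constant on the interval between $\alpha_\delta(E_k)$ and $\alpha_\delta(F)$ is
\[
2\Lambda_k\bigl(|\alpha_\delta(E_k)-\eps_k|+|\alpha_\delta(F)-\alpha_\delta(E_k)|\bigr)
\ \le\ O(1)+2\Lambda_k\,|E_k\Delta F|.
\]
For a competitor $F$ with $E_k\Delta F\subset B(x,r)$ and $r\le r_0$ fixed, the extra term is of order $\Lambda_k r^n$, which is unbounded once $\Lambda_k\to\infty$. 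Your constraint $\Lambda_k^{-1/2}\eps_k^{(2+\gamma)/2}=o(\eps_k)$ forces $\Lambda_k\gg\eps_k^{\gamma}$, so for $\gamma=0$ (which the proposition must cover, e.g.\ in the integrable/strictly stable cases) you genuinely need $\Lambda_k\to\infty$ and the $(\Lambda,r_0)$-quasi-minimality fails to be uniform in $k$. Without a uniform $r_0$, the $C^{1,\alpha}$ estimates degenerate and the graphical convergence in (iii) does not follow.

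The paper avoids this by keeping the pinning coefficient bounded and instead absorbing the scale into the ratio $\delta\cP/\alpha^{2+\gamma}$. Its quasi-minimality proof then splits into two cases according to whether $|E_j\Delta F|\ge\alpha(E_j)^{2+\gamma}$ or not: in the first case one uses $\cQ(E_j,\gamma)\le Q_0+1$ directly, and in the second $\alpha(F)\approx\alpha(E_j)$ so the bounded-coefficient penalty contributes $O(|E_j\Delta F|)$. Either you need to adopt that functional, or---if you insist on penalizing perimeter---replace the quadratic pinning by something whose variation is $O(|E_k\Delta F|)$ uniformly in $k$ (e.g.\ a linear penalty with a coefficient tuned to $\eps_k$), and redo the pinning estimate accordingly. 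A secondary point: the claim that the soft volume penalty $\Lambda_0\,\big||F|-V_0\big|$ forces $|E_k|=V_0$ needs justification in the presence of the additional $\alpha_\delta$-penalty; the standard argument via the mean-curvature bound applies to minimizers of $\cP+\Lambda_0\,\big||\cdot|-V_0\big|$ alone.
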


The proof of \cref{p:sel_pri} is given in \cref{ss:selection_principle} and is a modification of the one in \cite{CiLe} with the simplification that the ambient space is compact and the complication that once again we do not know the shape of the minimizers nor the growth of the isoperimetric profile $V\mapsto \cI(V)$. Notice that one of the reason for this local version is the choice of $\delta$ so that $\de \Sigma_0$ is smooth, since it is sufficiently close to $\Sigma$.

We are now ready to conclude the proof of \cref{l:quand_Loj}. If $\cQ(\Sigma,\gamma) = \infty$, then it follows from \cref{l:LvsF} (and the triangle inequality) that \cref{l:quand_Loj} holds. Otherwise, we can apply \cref{p:sel_pri}: since $\Sigma_0\in W_\delta$ is a minimizer of \eqref{e:isop} and $\partial\Sigma$ is assumed to be smooth, choosing $\delta$ sufficiently small depending on $\delta(\Sigma)$, $\eps$-regularity guarantees that $E_k = \Sigma + \tilde u_{k}$ for some $\tilde u_k \in \cB_{\delta(\Sigma)}(\Sigma)$.

Then by (ii) \cref{p:sel_pri}  we have
\begin{align}
\inf_{\tilde{\Sigma} \in \cM\cap W_\delta} \cQ(\tilde{\Sigma},\gamma)
	&=\lim_{k\to \infty} \cQ(E_k,\gamma)= \lim_{k\to \infty}  \frac{\delta \cP(\Sigma+\tilde u_k)}{\alpha_{\delta(\Sigma)}(\Sigma+\tilde u_k)^{2+\gamma}}\notag
	\stackrel{\eqref{e:Loj}}{\geq} C(\Sigma)>0
\end{align}
where the second equality follows from the fact that $\alpha_\delta(E_k) > 0$ for every $k$ (i.e., (i) of \cref{p:sel_pri}). This implies \eqref{e:lowerboundforsigma} and thus concludes the proof.
\qed

\subsection{Proof of \cref{thm:quantit_analytic}}

Let $\delta(\Sigma), C(\Sigma)>0, \gamma(\Sigma)\geq 0$ be the constants of \cref{l:quand_Loj} for each $\Sigma \in \cM$. Consider the covering $\left(B_{\sfrac{\delta(\Sigma)}2}(\Sigma)\right)_{\Sigma\in \cM}$ of $\cM$ with respect to the $L^1$-norm, and recall that $\cM$ is compact, so that there exists a finite subcover $\left(B_{\sfrac{\delta(\Sigma_j)}2}(\Sigma_j)\right)_{j=1}^J$ of $\cM$. Set
\begin{equation}\label{e:choice_constants}
\delta_0:=\min_{j=1,\dots,J}\delta(\Sigma_j)\,,
\qquad 
\gamma_0:=\max_{j=1,\dots,J}\gamma(\Sigma_j)
\qquad\mbox{and}\qquad
C_0:=\min_{j=1,\dots,J}C(\Sigma_j)\,.
\end{equation}

We claim that there exists $\alpha_0>0$ such that 
\begin{equation}\label{e:small_regime}
\alpha(E)<\alpha_0\quad \mbox{implies}\quad \delta \cP(E)\geq C_0 \,\left(\alpha(E)\right)^{2+\gamma_0}\,.
\end{equation} 
Indeed suppose not, than there exists a sequence $E_j$ of sets of finite perimeter such that $\alpha(E_j)\to 0$ such that
$$
\delta \cP(E_j)< C_0 \, \left(\alpha(E_j)\right)^{2+\gamma_0}\,.
$$
By standard compactness argument, up to a subsequence $E_j\to \bar \Sigma\in \cM$, so that there exists $N$ sufficiently large satisfying
$$
\|\chi_{E_N}-\chi_{\bar \Sigma}\|_{L^1}\leq \frac{\delta_0}2\,.
$$
Then, by triangle inequality and definition of $(\Sigma_j)_{j=1}^J$, we can assume without loss of generality that 
$$
\|\chi_{E_N}-\chi_{\Sigma_1}\|_{L^1}\leq \delta(\Sigma_1),
$$
and that, by our contradiction assumption and the definitions of $C_0,\gamma_0$,
$$
\delta \cP(E_N)< C(\Sigma_1) \, \left(\alpha(E_N)\right)^{2+\gamma(\Sigma_1)} \leq C(\Sigma_1) \, \left(\alpha_{\Sigma_1}(E_N,\delta)\right)^{2+\gamma(\Sigma_1)}  \,.
$$
This is a contradiction with \cref{l:quand_Loj}.

Next suppose $\alpha_0\leq \alpha(E)\leq 2 |M|_g$, then we recall the following fact (whose proof is a simple contradiction argument combined with the fact that $M$ is compact):

\begin{lemma}[{\cite[Lemma 3.1]{CiLe}}]
	For every $\alpha_0 >0$ there exists $\delta_0 >0$ such that, for any $E$, if $\delta \cP(E)< \delta_0$, then $\alpha(E)<\alpha_0$.
\end{lemma}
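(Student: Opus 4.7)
The plan is to argue by contradiction using the standard BV-compactness on the closed manifold $(M,g)$, together with lower semicontinuity of the perimeter. Suppose the statement fails. Then there exist $\alpha_0>0$ and a sequence of Caccioppoli sets $E_k \subset M$ with $|E_k|_g = V_0$ such that
\[
\delta\cP^g(E_k) = \cP^g(E_k) - \cI^g(V_0) \longrightarrow 0,
\qquad
\alpha_g(E_k) \geq \alpha_0.
\]
First, since $\cP^g(E_k)$ is uniformly bounded (converging to $\cI^g(V_0)<\infty$) and $M$ is compact, the sequence $\{\chi_{E_k}\}$ is bounded in $\mathrm{BV}(M)$. By BV compactness on a compact manifold, up to subsequence $\chi_{E_k} \to \chi_{E_\infty}$ in $L^1(M)$ for some Caccioppoli set $E_\infty\subset M$.

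Second, $L^1$ convergence preserves volume, so $|E_\infty|_g = V_0$, and by lower semicontinuity of perimeter,
\[
\cP^g(E_\infty) \leq \liminf_{k\to\infty} \cP^g(E_k) = \cI^g(V_0).
\]
Since $|E_\infty|_g = V_0$, the reverse inequality is automatic from the definition of $\cI^g$, so $E_\infty \in \cM^g_{V_0}$ (using that isoperimetric regions exist for every volume in $(0,|M|_g)$ on a closed manifold).

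Finally, the definition of the Fraenkel asymmetry gives
\[
\alpha_g(E_k) \leq |E_k \Delta E_\infty|_g = \|\chi_{E_k}-\chi_{E_\infty}\|_{L^1} \longrightarrow 0,
\]
which contradicts $\alpha_g(E_k)\geq \alpha_0>0$. Hence the claimed $\delta_0$ exists. I do not expect any serious obstacle here: the only ingredients are BV compactness on the compact manifold $M$, lower semicontinuity of $\cP^g$, and existence of isoperimetric regions, all of which are classical and already invoked elsewhere in the paper (cf.\ \cref{t:regularity} and the discussion preceding \eqref{e:isop}).
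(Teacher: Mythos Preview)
Your proof is correct and follows exactly the approach the paper indicates: the paper does not give a detailed argument but only remarks that the proof ``is a simple contradiction argument combined with the fact that $M$ is compact,'' which is precisely the BV-compactness plus lower-semicontinuity argument you carry out.
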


Then we have that in our regime $\cP(E)-\cP(\Sigma)\geq \delta_0$, and so
\begin{equation}\label{e:large_regime}
\delta \cP(E)\geq \delta_0\geq \frac{\delta_0}{(2 |M|_g)^{2+\gamma_0}} (\alpha(E))^{2+\gamma_0}\,.
\end{equation}
Choosing $C_1:=\min\left\{C_0,\frac{\delta_0}{(2 |M|_g)^{2+\gamma_0}} \right\}$, \cref{thm:quantit_analytic} follows from \eqref{e:small_regime} and \eqref{e:large_regime}.
\qed

\section{Optimality of \cref{thm:quantit_analytic}}\label{ss:example}
In this section we prove \cref{thm:example}, giving an example demonstrating the sharpness of \cref{thm:generic-isop} and \cref{thm:quantit_analytic}. Finally we discuss the possibility of extending our results to the case of non-compact, finite volume manifolds.

We begin by proving the following relatively standard result. See e.g.\ \cite{PedRit:99,Ped,Sou} for more refined  statements.
\begin{lemma}\label{lem:S1Sn-iso}
There is $R_{0} = R_{0}(n)$ so that for $R \geq R_{0}$, if we consider the product metric $g_{R}$ on $\mathbb{S}^{1}(R) \times \mathbb{S}^{n-1}(1)$, then every isoperimetric region $\Omega \subset M$ with volume $|\Omega| = \frac 12 |\mathbb{S}^{1}(R) \times \mathbb{S}^{n-1}(1)|$ is of the form
\[
\Omega = (t_{0},t_{0}+\pi R) \times \mathbb{S}^{n-1} 
\]
for $t_{0} \in \R$.  
\end{lemma}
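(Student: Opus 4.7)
The plan is to reduce to a one-dimensional variational problem via slice-wise spherical rearrangement on the $\mathbb{S}^{n-1}$-factor, then exploit the growing length of the $\mathbb{S}^1$-factor to rule out all competitors other than slabs. Since the slab $(t_0,t_0+\pi R)\times\mathbb{S}^{n-1}$ has perimeter exactly $2\omega_{n-1}$ (with $\omega_{n-1}:=|\mathbb{S}^{n-1}(1)|$), any isoperimetric $\Omega$ of volume $\pi R\omega_{n-1}$ automatically satisfies the a priori upper bound $\cP^{g_{R}}(\Omega)\leq 2\omega_{n-1}$.

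First I apply slice-wise spherical-cap rearrangement around a fixed pole $p\in\mathbb{S}^{n-1}$: each slice $\Omega\cap(\{t\}\times\mathbb{S}^{n-1})$ is replaced by the geodesic ball of the same area centered at $p$. By a standard slice-wise application of the spherical isoperimetric inequality together with the BV coarea formula, the rearranged set $\Omega'$ has the same volume as $\Omega$ and $\cP^{g_{R}}(\Omega')\leq \cP^{g_{R}}(\Omega)$, so $\Omega'$ is also isoperimetric and is determined by a BV profile $\rho:\mathbb{S}^1(R)\to[0,\pi]$. Writing $V(\rho):=\omega_{n-2}\int_0^{\rho}\sin^{n-2}(s)\,ds$ for the spherical cap volume, the volume constraint reads $\int_{\mathbb{S}^1(R)}V(\rho(t))\,dt=\pi R\omega_{n-1}$, and the perimeter is given (on the smooth part of $\rho$) by
\[
\cP^{g_{R}}(\Omega') = \int_{\mathbb{S}^1(R)}\omega_{n-2}\sin^{n-2}(\rho(t))\sqrt{1+\rho'(t)^2}\,dt,
\]
plus BV jump contributions $|V(\rho^+)-V(\rho^-)|$ at the jump set of $\rho$. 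The elementary inequalities $\sqrt{1+\rho'^2}\geq 1$ and $\sqrt{1+\rho'^2}\geq|\rho'|$ then produce two key lower bounds: $(\mathrm{a})$ $\cP^{g_{R}}(\Omega')\geq \omega_{n-2}\int_{\mathbb{S}^1(R)}\sin^{n-2}(\rho)\,dt$, and $(\mathrm{b})$ $\cP^{g_{R}}(\Omega')\geq |D(V\circ\rho)|(\mathbb{S}^1(R))$, the latter being the BV total variation of $V\circ\rho$.

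The main step is a dichotomy. For $\eta>0$ small, bound (b) together with $\cP^{g_{R}}(\Omega')\leq 2\omega_{n-1}$ limits the number of transitions of $\rho$ between $\{\rho<\eta\}$ and $\{\rho>\pi-\eta\}$ to a constant depending only on $n$. If $\rho$ occupies the intermediate region $[\eta,\pi-\eta]$ on a set of measure $\geq c>0$, bound (a) gives $\cP^{g_{R}}(\Omega')\geq \omega_{n-2}\sin^{n-2}(\eta)\,c$; combined with the volume constraint and the bounded transition count, this measure is forced to grow linearly in $R$ unless $\rho$ is genuinely $\{0,\pi\}$-valued, producing $\cP^{g_{R}}(\Omega')\gtrsim R$ and contradicting the a priori bound for $R\geq R_0(n)$. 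Hence $\rho$ must be $\{0,\pi\}$-valued almost everywhere, and the total-variation budget from (b) forces exactly two jumps (one up and one down); the volume constraint then pins the single arc where $\rho=\pi$ to have length $\pi R$, so $\Omega'=(t_0,t_0+\pi R)\times\mathbb{S}^{n-1}$. Finally, since spherical rearrangement preserves slice areas and the rearranged slab's slices are either empty or all of $\mathbb{S}^{n-1}$, each slice of the original $\Omega$ must also be either empty or all of $\mathbb{S}^{n-1}$, giving $\Omega$ of the asserted form.

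The main technical obstacle I anticipate is making the dichotomy quantitative — specifically, showing that a profile $\rho$ occupying the transition region $[\eta,\pi-\eta]$ on even a small but non-negligible set cannot beat the slab for $R$ large. I would address this by fixing $\eta=\eta(n)$ small enough that $V(\eta)<\omega_{n-1}/4$ (so that (b) bounds the number of transitions by $4$), and then combining bounds (a) and (b) with the pointwise inequality $\sqrt{1+\rho'^2}\geq (1+|\rho'|)/\sqrt{2}$, which controls simultaneously both the transition-driven perimeter and the time $\rho$ spends in the intermediate region.
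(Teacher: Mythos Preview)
Your strategy---slice-wise spherical symmetrization to reduce to a one-dimensional profile---is the classical route taken in the references the paper cites (Pedrosa--Ritor\'e et al.), and is genuinely different from the paper's own proof. The paper instead argues by contradiction via GMT: it uses nonnegativity of the Ricci curvature to force $\partial^*\Omega_k$ to be connected (unless already a slab), appeals to its \cref{lem:bdHiso} to bound the mean curvature uniformly in $R$, then uses the monotonicity formula to bound the extrinsic diameter of the single boundary component, trapping $\partial^*\Omega_k$ in a tube $[t_k,t_k+T_0]\times\mathbb{S}^{n-1}$ of fixed width---which is incompatible with enclosing half the volume once $R$ is large. So the paper never symmetrizes and never analyzes a one-dimensional problem; its argument trades the rearrangement machinery for stability and monotonicity inputs already developed elsewhere in the paper.

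Your reduction and the formulas for volume and perimeter of the rotationally symmetric competitor are correct, as are bounds (a) and (b) individually. The gap is in the dichotomy. The assertion ``this measure is forced to grow linearly in $R$ unless $\rho$ is genuinely $\{0,\pi\}$-valued'' does not follow from (a), (b), and the volume constraint: take $\rho$ piecewise constant, equal to $\epsilon$ on one half-circle and $\pi-\epsilon$ on the other, with $\epsilon<\eta$. This profile is not $\{0,\pi\}$-valued, yet it never enters $[\eta,\pi-\eta]$, satisfies the volume constraint exactly, and has only two transitions. Your bounds (a) (restricted to the intermediate region) and (b) see nothing wrong with it. Of course its \emph{actual} perimeter $2\pi R\,\omega_{n-2}\sin^{n-2}(\epsilon)+2(\omega_{n-1}-2V(\epsilon))$ exceeds $2\omega_{n-1}$---but that computation uses that for piecewise constant profiles the lateral and jump contributions are \emph{additive}, which your two separate bounds do not capture. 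Your proposed fix $\sqrt{1+\rho'^2}\ge(1+|\rho'|)/\sqrt{2}$ costs a factor $\sqrt 2$, so it only yields $\cP(\Omega')\ge\sqrt{2}\,\omega_{n-1}$ in the slab limit, too weak to exclude competitors near the slab.

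The standard way to close this within the symmetrization framework is to use that $\Omega'$ is itself isoperimetric, hence has smooth constant-mean-curvature boundary; then $\rho$ satisfies an ODE with a first integral, and one classifies solutions (constant, monotone between $0$ and $\pi$, or unduloid-type) and rules out the non-slab ones for large $R$. Alternatively, one can keep the exact perimeter expression and argue more sharply than via the separate bounds (a), (b). Either way, the step you flagged as the ``main technical obstacle'' needs a different mechanism than the one you propose.
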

\begin{proof}
For $n=2$ this can easily be proven by passing to the universal cover $\R^{2}$ and using the classification of embedded constant curvature curves. We thus consider $n\geq 3$. The proof we give below holds for $3\leq n \leq 7$, but can be easily modified to accommodate for a singular set in higher dimensions. 

Take $R_{k}\to\infty$ and consider a sequence of isoperimetric regions 
\[
\Omega_{k}\subset \mathbb{S}^{1}(R_{k}) \times \mathbb{S}^{n-1}
\]
with $|\Omega_{k}| = \frac 12 |\mathbb{S}^{1}(R_{k}) \times \mathbb{S}^{n-1}| \to \infty$. By comparison with the expected minimizer, we have that
\[
\cP(\Omega_{k}) \leq 2 |\mathbb{S}^{n-1}|. 
\]
Moreover, because the Ricci curvature of $\mathbb{S}^{1}(R)\times \mathbb{S}^{n-1}$ is non-negative and vanishes only in the $\mathbb{S}^{1}(R)$ directions, we see\footnote{This is a standard argument: if there are two boundary components, take a function in the second variation that is $1$ on one component and $-\lambda$ on another, where $\lambda$ is chosen so that the function integrates to zero. Non-negativity of the Ricci curvature implies that $|A|^{2} + \textrm{Ric}(\nu,\nu)$ vanishes identically along each component.} that the reduced boundary of $\Omega_{k}$ has exactly one component unless $\Omega_{k}$ is the form asserted in the lemma. 

We now claim that the mean curvature $H_{k}$ of $\partial^{*}\Omega_{k}$ remains uniformly bounded as $k\to\infty$. This follows exactly as in \cref{lem:bdHiso} since all of the metrics $g_{R}$ are locally isometric. Thus, using the monotonicity formula, we find that each component of $\partial^{*}\Omega_{k}$ has uniformly bounded (extrinsic) diameter, say by $T_{0}$. From this, the proof easily follows, because if $\partial^{*}\Omega_{k}$ has only one component, then $\partial^{*}\Omega_{k}\subset [t_{k},t_{k}+T_{0}]\times \mathbb{S}^{n-1}$ for some $t_{ k}$. This implies that either $|\Omega_{k}| = O(1)$ or $|\Omega_{k}| \geq \frac 34 |\mathbb{S}^{1}(R_k)\times \mathbb{S}^{n-1}|$ for $k$ sufficiently large.  This is a contradiction. 
\end{proof}

We now prove \cref{thm:example}. We begin with the non-analytic case (the third assertion in the Theorem) and explain how the proof can be modified for the analytic case at the end of the section. Consider a fixed $R\geq R_{0}$ for $R_{0}$ from the previous lemma. Consider a sequence of smooth functions $\varphi_{k} : \R \to (1/2,2)$ so that 
\begin{enumerate}
\item $\varphi_{k}$ is $2\pi R$-periodic,
\item $\varphi_{k}(r) = 1$ for $|r-1| > \frac 1 k$
\item $\varphi_{k}$ converges smoothly to $1$ as $k\to\infty$,
\item $\varphi_{k}(1) = 1-\frac 1 k$ is the unique minimum of $\varphi_{k}$, and
\item $\varphi_{k}$ is strictly decreasing on $(1/2,1)$ and strictly increasing on $(1,3/2)$. 
\end{enumerate}
A simple calibration argument (based on (5) above) combined with \cref{lem:S1Sn-iso} shows that for $k$ sufficiently large, the regions $(1,R_{k})\times \mathbb{S}^{n-1}$ and their complement are the unique isoperimetric regions of half the volume in the warped product metric $g_{k}$ on $\mathbb{S}^{1}(R) \times \mathbb{S}^{n-1}$ given by
\[
g_{k} = dr^{2} + \varphi_{k}(r)^{2} g_{\mathbb{S}^{n-1}}.
\] 
(Above, $R_{k} = 1 + \pi R + o(1)$ as $k\to\infty$). We fix such a $k$ for the remainder of this section.

We now consider the sets $\Gamma_{\delta} : = (1+\delta,\rho_{\delta})\times \mathbb{S}^{n-1}$ where $\rho_{\delta}$ is chosen so that the volume of $\Gamma_\delta$ is equal to $\frac 12 |M|$ for all $\delta>0$ small. Note that
\[
\rho_{\delta} =  \rho_{0} + \int_{1}^{ 1+\delta} \varphi(r)^{n-1} dr
\]
Thus,
\[
|\Gamma_{\delta}\Delta\Gamma_{0}| = 2 |\mathbb{S}^{n-1}| \int_{1}^{1+ \delta} \varphi(r)^{n-1} dr \geq c \delta
\]
On the other hand, for some $C_n > 0$,
\[
\cP(\Gamma_{\delta}) - \cP(\Gamma_{0}) =C_n \left(\varphi(1+\delta)^{n-1} - \varphi(1)^{n-1}\right), 
\]
(recall that $\varphi \equiv 1$ outside of a small neighborhood of $1$).

Now, suppose that $\varphi(1+r) - \varphi(1)$ vanishes faster than any polynomial. Then, we see that 
\[
\cP(\Gamma_{\delta}) - \cP(\Gamma_{0}) \leq C_{j}\delta^{j}
\]
for any $j > 0$. This shows that it cannot be true that 
\[
\cP(\Gamma_{\delta}) - \cP(\Gamma_{0}) \geq C|\Gamma_{\delta}\Delta\Gamma_{0}|^{2+\gamma}
\]
for any $C, \gamma>0$, independent of $\delta$. 

To show that for a general analytic metric, it is necessary to allow $\gamma>0$ (arbitrarily large) is slightly more involved. We sketch the modifications here. Choose an analytic warping function $\varphi$ that is $\pi R$ periodic, with unique minima $\varphi(1) < 1$ at $1$ (and hence $1+\pi R$), so that the warping function is strictly decreasing on $(0,1)$ and strictly increasing on $(1,2)$, and so that $\varphi>1$ outside of $(0,2)$. Assuming $\varphi(1+x) = \varphi(1) + x^{2m}$ for  small $x$ and a large positive integer $m$, shows that one cannot take $\gamma=0$ (and that $\gamma > 0$ can be arbitrarily large). \qed

\subsection{Manifolds with metric of finite volume} We briefly comment on the situation for $(M,g)$ non-compact but still with finite volume. By \cite{Mor:GMT}, isoperimetric regions exist for all volumes $V_{0}\in (0,|M|_{g})$. However, it seems possible that such an $(M,g)$ exists where some isoperimetric region has infinitely many components (compare to \cref{l:finite_components}). While each of these components might satisfy a \L ojasiewicz inequality, it seems plausible that the associated constants, $\gamma$, are unbounded, in which case one could construct a counterexample to the finite-volume analogue of \cref{thm:generic-isop} or \cref{thm:quantit_analytic}. It would be interesting to rigorously construct such an example.

\section{Proof of \cref{thm:generic-isop}}\label{ss:generic}

We first quickly adapt some of the results in \cite{White_b2} to our setting and prove \cref{thm:generic-isop}.

\subsection{More Banach manifolds}

Given $\Sigma$, a minimizer of \eqref{e:isop} for the metric $g_0$ and with volume $V_0$, we consider the space of critical points for the isoperimetric problem near $(0,g_0,V_0)$ defined by
\begin{gather*}
\cM_r(\Sigma,g_0,V_0):=\left\{(f,g,V)\in \cB_r(\Sigma,g_0,V_0)\,:\, \nabla_u\cP^g(\Sigma+f)=0\right\}\,,
\end{gather*}
where $0<r<\delta$ as in \cref{l:volume_constrained_manifolds}.

\begin{proposition}\label{p:local_critical_points}
	There exists $0<\delta_1<\delta$, depending on $\Sigma,g_0,V_0$, such that $\cM_{\delta_1}(\Sigma, g_0,V_0)$ is a separable, smooth Banach submanifold of $\cB_{\delta}(\Sigma,g_0,V_0)$ and such that the projection 
	$$
	\Pi\colon \cM_{\delta_1}(\Sigma, g_0,V_0) \to \Gamma\times \R \quad \mbox{is a Fredholm operator of index }0\,.
	$$
\end{proposition}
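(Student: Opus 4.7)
Plan: The proof follows the standard implicit-function-theorem scheme on Banach manifolds, in the spirit of \cite{White_b1, White_b2}, with the extra Lagrange parameter $V$ handled along the lines of \cite{EnSpVe}. The plan is to introduce the smooth map
\[
\Phi \colon \cB_\delta(\Sigma, g_0, V_0) \longrightarrow Y := C^{0,\alpha}_0(\partial \Sigma), \qquad \Phi(f,g,V) := \nabla_u \cP^g(\Sigma + f),
\]
where $C^{0,\alpha}_0$ denotes the $C^{0,\alpha}$ functions on $\partial \Sigma$ of mean zero with respect to $d\sigma_{g_0}$. By construction $\cM_r(\Sigma, g_0, V_0) = \Phi^{-1}(0) \cap \cB_r$, so the task is to show $0$ is a regular value of $\Phi$ at the base point $(0, g_0, V_0)$ and that the restriction of the natural projection is Fredholm of index $0$.

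The first step is to linearize. A direct variational computation gives
\[
D\Phi|_{(0, g_0, V_0)}(u,h,v) = \pi_0\bigl(J_\Sigma u + L_\Sigma(h)\bigr),
\]
where $J_\Sigma$ is the Jacobi operator on $\partial \Sigma$, $L_\Sigma \colon T_{g_0}\Gamma \to C^{0,\alpha}(\partial\Sigma)$ records the first-order change of mean curvature under a metric perturbation, and $\pi_0$ is the $L^2(d\sigma_{g_0})$-projection onto mean-zero functions. Because $\partial \Sigma$ is smooth and compact (by \cref{t:regularity}) and $J_\Sigma$ is self-adjoint elliptic, it is Fredholm with $\mathrm{Im}(J_\Sigma) = K^\perp$, where $K := \ker J_\Sigma \cap C^{2,\alpha}_0$, so $\pi_0 \circ J_\Sigma$ has closed image of codimension $\dim K$ in $Y$. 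The hard part is then to show $D\Phi$ is surjective with split kernel by covering these missing directions via a White-type bumpiness argument: for any $\phi \in K$, I would construct a normal-normal metric variation $h$, compactly supported in a tubular neighborhood of $\partial \Sigma$ and built explicitly from $\phi$, so that a direct computation yields $\pi_0 L_\Sigma(h) = \phi$ modulo $\mathrm{Im}(\pi_0 \circ J_\Sigma)$. This surjectivity is the main obstacle; the novel ingredient with respect to \cite{White_b1, White_b2} is compatibility with the codimension-one constraint defining $\cB_\delta$, which is easily enforced by adjusting $v$ according to the linearized identity $\int u\,d\sigma_{g_0} + D_g|\Sigma|_g(h) = v$.

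Once surjectivity with split kernel is in place, the Banach implicit function theorem produces $\delta_1 \in (0, \delta)$ such that $\cM_{\delta_1}(\Sigma, g_0, V_0) = \Phi^{-1}(0) \cap \cB_{\delta_1}$ is a smooth Banach submanifold of $\cB_\delta$, with separability inherited from the ambient product $C^{2,\alpha} \times \Gamma \times \R$. For the Fredholm statement I would identify $T\cM = \ker D\Phi$ and note that $D\Pi(u,h,v) = (h,v)$. Its kernel consists of those $u \in C^{2,\alpha}(\partial\Sigma)$ satisfying $\int u\, d\sigma_{g_0} = 0$ together with $J_\Sigma u \in \R$, which is finite-dimensional since $J_\Sigma$ is Fredholm. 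The cokernel is extracted from the short exact sequence
\[
0 \longrightarrow T\cM \longrightarrow T\cB \xrightarrow{\ D\Phi\ } Y \longrightarrow 0,
\]
combined with the surjection $T\cB \to T\Gamma \times \R$: a diagram chase identifies $\mathrm{coker}(D\Pi)$ with the cokernel of the volume-constrained operator $\pi_0 \circ J_\Sigma |_{C^{2,\alpha}_0}$, which by self-adjointness has the same dimension as its kernel. Accounting carefully for the one-dimensional Lagrange direction from the volume constraint on both sides finally yields $\mathrm{ind}(\Pi) = 0$.
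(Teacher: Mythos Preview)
Your proposal is correct and follows essentially the same route as the paper: both arguments verify the hypotheses of White's abstract manifold-structure theorem \cite[Theorem 1.2]{White_b2}, the key step being to produce, for each nonzero element of the kernel of the constrained Jacobi operator, a metric variation whose mixed second derivative with the perimeter functional is nonvanishing. The only cosmetic differences are that the paper invokes White's theorem as a black box rather than unpacking the implicit function argument, and uses a conformal perturbation $g(s)=(1+sf)g_0$ with $f|_{\partial\Sigma}=0$ (following \cite[Theorem 2.1]{White_b2}) in place of your normal--normal variation; the volume parameter is absorbed exactly as you suggest, by adjusting $V$ along the family.
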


\begin{proof} Since the statement is local we can apply \cite[Theorem 1.2]{White_b2} with (using his notation), $X=\cB_{\delta}(\Sigma,g_0,V_0)$, $Y=C^{0,\alpha}(\de \Sigma)$, $\Gamma=\Gamma\times V$ and $H(u,g)=\nabla_u \cP^g(\Sigma+f)$. It is well known that the operator $\nabla_{uu}\cP^{g_0}(\Sigma)$ is a self-adjoint Fredholm map of order $0$ (it is the restriction of the usual Jacobi operator of $\de \Sigma$ in the metric $g_0$ to variations with zero average, that is, to the tangent of $\cB$). Moreover for every nonzero $v\in \ker(\nabla_{uu}\cP^{g_0}(\Sigma))$, let $g(s)\in \Gamma$ be the one parameter family of metrics defined in a neighborhood of $\de \Sigma$ by
	$$
	g(s)(z):=(1+sf(z))\,g_0(z)
	$$
	where $f(z)=0$ for every $z\in \de \Sigma$. Then, following the computation in \cite[Theorem 2.1]{White_b2} we can find $f$ such that
	$$
    \frac{\de^2}{\de s\de t}\Big|_{t=0=s} \cP^{g(s)}(\Sigma+tv)\neq 0\,.
	$$ 
	Since for $t,s$ sufficiently small, we can always choose $V(s,t):=|\Sigma+tv|_{g(s)}$ such that $(\Sigma+tv,g(s),V(t,s))\in \cB_{\delta}$, condition (C) of \cite[Theorem 1.2]{White_b2} is satisfied and \cref{p:local_critical_points} is proved. 
\end{proof}

Finally, by a standard procedure (see \cite[Theorem 2.1]{White_b2}), we can patch together all the local neighborhoods $\cM(\Sigma, g_0, V_0)$ to obtain a Banach manifold containing all the critical points for \eqref{e:isop} for varying metric and values of the volume, but fixed diffeomorphism type (as we are only working with local parametrizations). 

\begin{proposition}\label{p:final_manifold}
	Let $N^{n-1}$ and $M^n$ be smooth compact  manifolds and let $\Gamma$ be the collection of $C^3$ riemannian metrics on $M^n$. Let $[u]$ denote the class of $C^{2,\alpha}$ embeddings $u\colon N\to M$ up to diffeomorphism, i.e. $v\in [u]$ if and only if $v=u\circ \phi$, with $\phi \colon N\to N$ a smooth diffeomorphism. Let 
	\begin{multline*}
	\cM(N):=\{ ([u],g,V)\,:\, \mbox{$u(N)$ is the boundary of a critical point of \eqref{e:isop}} \\ \mbox{w.r.t. $g$ and  $V$}\}\,,
	\end{multline*}
	Then $\cM(N)$ is a smooth separable Banach manifold and the map 
	$$
	 \cM(N) \ni ([u],g,V)\mapsto \Pi([u],g,V):=(g,v)\in \Gamma\times V
	$$
	is a Fredholm operator of index $0$ and the kernel of $D\Pi([u],g,V)$ has dimension equal to the nullity of the second variation of $u(N)$ with respect to the metric $g$ in linear space of functions with zero average on $u(N)$.
\end{proposition}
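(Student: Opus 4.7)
The plan is to obtain $\cM(N)$ as a Banach manifold by patching together the local slices provided by \cref{p:local_critical_points}, following the procedure in \cite[Theorem 2.1]{White_b2}. First, I would show that for each triple $([u_0], g_0, V_0) \in \cM(N)$ with $\de \Sigma_0 = u_0(N)$, the set of critical points near $([u_0], g_0, V_0)$ in $\cM(N)$ is in bijection with $\cM_{\delta_1}(\Sigma_0, g_0, V_0)$. The key point is that normal graphs over $\de \Sigma_0$ provide a canonical local slice to the diffeomorphism action on embeddings: any $C^{2,\alpha}$ embedding $v\colon N \to M$ whose image is sufficiently $C^1$-close to $\de \Sigma_0$ admits a unique reparametrization $v\circ \phi$ whose image is the normal graph $\de \Sigma_0 + f$ of a unique $f \in C^{2,\alpha}(\de \Sigma_0)$. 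Hence the equivalence class $[v]$ is uniquely coded by $f$, yielding a well-defined chart on $\cM(N)$.

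Next, I would verify that the transition maps between overlapping charts, corresponding to different reference surfaces $\Sigma_1, \Sigma_2$ but the same piece of $\cM(N)$, are smooth. Given a normal graph $\de \Sigma_1 + f_1$ close enough to $\de \Sigma_2$, rewriting it as a normal graph $\de \Sigma_2 + f_2$ is a smooth operation between the relevant function spaces (by the implicit function theorem applied to the exponential map), and it preserves the metric/volume coordinates $(g,V)$. This shows $\cM(N)$ is a smooth Banach manifold, and separability follows from the separability of each local chart in \cref{p:local_critical_points} together with a countable covering argument (using also \cref{l:finite_components} to restrict to at most countably many diffeomorphism types).

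The Fredholm property is local and commutes with the transition maps, so the fact that $\Pi$ has index $0$ is inherited directly from \cref{p:local_critical_points}. For the kernel computation at a point $([u],g,V)$, a tangent vector in $\ker D\Pi([u],g,V)$ corresponds to $(f, 0, 0) \in T_{(0,g,V)} \cB_{\delta_1}(\Sigma,g,V)$ with $f$ lying in the tangent space $T_0\cB(\Sigma)$ (i.e.\ with zero average with respect to $g$) and satisfying the linearized stationarity equation
\begin{equation*}
\nabla^2_\cB \cP^g(\de \Sigma)[f] = 0.
\end{equation*}
This is precisely the kernel of the Jacobi operator of $u(N)$ restricted to zero-average functions, whose dimension is the claimed nullity.

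The main obstacle I anticipate is the careful bookkeeping for the transition maps—ensuring that the canonical slice given by normal graphs is compatible across different reference surfaces and that no spurious contribution from the diffeomorphism action enters the tangent space after passing to the quotient $[u]$. Since the analogous patching is carried out in detail in \cite{White_b2}, I would invoke those arguments directly, with the only modification being to carry the extra real parameter $V$ along throughout, which is harmless as it enters linearly via the volume constraint.
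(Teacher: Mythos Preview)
Your proposal is correct and follows essentially the same route as the paper: identify $[u]$ with a normal graph $f$ over a fixed reference hypersurface, invoke \cref{p:local_critical_points} for the local chart structure, and defer the patching and Fredholm bookkeeping to \cite[Theorem 2.1]{White_b2}. The paper's own proof is in fact considerably terser than yours and simply records the normal-graph identification before citing White for the rest.

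One small slip: your appeal to \cref{l:finite_components} for separability is misplaced here, since in \cref{p:final_manifold} the manifold $N$ is already fixed, so there is only one diffeomorphism type in play; separability of $\cM(N)$ follows directly from the separability of the local models in \cref{p:local_critical_points}. The countability-of-diffeomorphism-types argument via \cref{l:finite_components} is used later, in the proof of \cref{thm:generic-isop}, when one intersects the generic sets over all possible $N$.
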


\begin{proof}
	As observed in the preliminaries, given $u,v$ embeddings of $N$ in $M$ such that $\|u-v\|_{C^{2,\alpha}}\ll1$, we can find a function $f\in C^{2,\alpha}(u(N))$ such that $v(N)=u(N)+f$, and viceversa; if $f\in C^{2,\alpha}(\partial \Sigma=u(N))$  has small norm, then we can find $v\in C^{2,\alpha}(N,M)$,  $\|u-v\|_{C^{2,\alpha}}<<1$, such that $\partial \Sigma+f=v(N)$.  
	
	With this identification in mind we can use  \cref{p:local_critical_points}  to find local charts for $\cM$. The rest of the proposition follows exactly as in \cite[Theorem 2.1]{White_b2}.
\end{proof}

\subsection{Proof of \cref{thm:generic-isop}} First of all notice that for every diffeomorphism type $N^{n-1}$ we can apply Sard--Smale \cite[Theorem 1.3]{SaSm} to $\cM(N)$ and $\Pi$ to show that for every fixed $N$ there is an open and dense subset $\mathcal G_N\subset \Gamma\times \R$ such that every minimizer $u(N)$ of \eqref{e:isop} with $(g,v)\in \mathcal G_N$ is non-degenerate, that is, strictly stable. Since by \cref{l:finite_components}, every minimizer of \eqref{e:isop} has finitely many connected compact components and  since there are countably many diffeomorphism types for compact manifolds $(N_i)_{i\in \N}$, we can consider the open dense subset $\mathcal G:=\bigcap_{i\in \N}\mathcal G_{N_i}$ of $\Gamma\times \R$. Its projection $\mathcal G$ and $U$ on $\Gamma$ and $\R$ respectively are also open and dense, since the projection is an open map.

Now let $g\in \mathcal G $ and $V\in (0,|M|_g)\cap U$. Since every smooth minimizer of \eqref{e:isop} is strictly stable, we have that there are only finitely many minimizers with volume $V$ in the metric $g$. Now the result follows using \eqref{e:quant_stable} and letting $C(g)$ be the minimum of the constants in \eqref{e:quant_stable}.
\qed

\subsection{Some further consequences} We notice that as an outcome of the previous theorem we also have the following result valid in every dimension

\begin{corollary}
	For an open and dense set of metrics and volumes, smooth minimizers of \eqref{e:isop} are strictly stable and thus satisfy \eqref{e:quant_stable} with a constant $C$ depending only on $(M,g)$ and $V_{0}$. This is the generic analogue of \cref{l:quand_Loj}.
\end{corollary}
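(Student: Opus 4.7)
The proof is a direct adaptation of the argument for \cref{thm:generic-isop}, with the observation that the dimension restriction $2\leq n\leq 7$ in that proof was used only to guarantee that all minimizers are smooth; since here we only assert strict stability and \eqref{e:quant_stable} for the \emph{smooth} minimizers, that restriction can simply be dropped and the conclusion goes through in every dimension. For each closed $(n-1)$-manifold $N$, I would apply \cref{p:final_manifold} to produce the smooth separable Banach manifold $\cM(N)$ of critical points together with the Fredholm projection $\Pi\colon \cM(N)\to \Gamma\times \R$ of index $0$, whose derivative at any point has kernel equal to the space of zero-average Jacobi fields. Sard--Smale then yields an open dense set $\cG_N\subset \Gamma\times \R$ of regular values of $\Pi$, and at a regular value injectivity of $D\Pi$ is equivalent to strict stability of every smooth critical point of boundary type $N$ above that $(g,V)$.

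Next, since there are only countably many diffeomorphism types $\{N_i\}_{i\in\N}$ of closed $(n-1)$-manifolds and, by \cref{l:finite_components} together with a standard monotonicity/regularity argument, only finitely many of them can arise as components of smooth minimizers in any small neighborhood of a given $(g_0,V_0)$, the intersection $\cG:=\bigcap_i \cG_{N_i}$ is open and dense in $\Gamma\times \R$. Projecting onto the two factors gives the asserted open dense subsets of $\Gamma$ and $(0,|M|_g)$. For any $(g,V_0)$ in this generic set, every smooth minimizer $\Sigma$ of \eqref{e:isop} at volume $V_0$ is strictly stable, so the strictly-stable clause of \cref{l:quand_Loj} provides a local quadratic estimate $\cP^g(E)-\cP^g(\Sigma)\geq C(\Sigma)\,|E\Delta \Sigma|^2$ on a small $L^1$-neighborhood of $\Sigma$. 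Because strictly stable minimizers are isolated in $L^1$ and the set of smooth minimizers of volume $V_0$ is $L^1$-compact, there are only finitely many such $\Sigma$, and taking the minimum of the local constants produces a single $C=C(M,g,V_0)>0$ as required.

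The one step that requires a little care is the promotion of $\bigcap_i \cG_{N_i}$ from a Baire-generic $G_\delta$ set to an honestly open and dense set; this is precisely the step invoked (implicitly) in the proof of \cref{thm:generic-isop}, and it hinges on the local uniform bound on admissible component diffeomorphism types coming from \cref{l:finite_components} together with the uniform mean-curvature and monotonicity-based diameter bounds used there. I expect this to be the main obstacle. Once it is secured, the remainder is a direct combination of the Sard--Smale machinery of \cref{p:final_manifold} with the strictly-stable case of \cref{l:quand_Loj}, exactly as in the proof of \cref{thm:generic-isop}.
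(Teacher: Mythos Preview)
Your proposal is correct and follows essentially the same route as the paper: the paper gives no separate proof of this corollary at all, simply remarking that it is ``an outcome of the previous theorem'' (i.e.\ the proof of \cref{thm:generic-isop}) and ``valid in every dimension.'' Your expansion of that remark---dropping the dimension restriction because only smooth minimizers are claimed, running Sard--Smale on each $\cM(N)$, intersecting over diffeomorphism types, and then invoking the strictly-stable clause of \cref{l:quand_Loj} plus finiteness of strictly stable minimizers---is exactly the intended argument. The openness-of-the-intersection issue you flag is real and is glossed over in the paper's proof of \cref{thm:generic-isop} as well; your diagnosis that it should follow from the local finiteness of admissible boundary types (via \cref{l:finite_components} and the mean-curvature/monotonicity bounds) is the right idea.
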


Finally if one uses $\cB_r(\Sigma,g_0)$ instead of $\cB_r(\Sigma, g_0,V_0)$ and argues as in the previous two subsections it is easy to conclude the following

\begin{corollary}\label{coro:fixV-perturbg}
	Let $V\in \R$. There exists an open and dense set of metrics, $\mathcal G\subset \Gamma$, such that for every $g\in \mathcal G$  there exists a constant $C(g,V)>0$ such that if $\Sigma\in \cA_{V}^g$ is a minimizer of \eqref{e:isop} in $(M,g)$, then
	\begin{equation}
	\delta \cP^g(E) \geq C(g,V) \,\left|E\Delta \Sigma\right|_g^{2}\\,\qquad \mbox{for every $E\in \cA^g_{V_0}$.}
	\end{equation}
\end{corollary}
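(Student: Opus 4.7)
The plan is to run the same argument as in the proof of \cref{thm:generic-isop}, but using the Banach manifolds $\cB_r(\Sigma,g_0)$ and $\cM_r(\Sigma,g_0)$ defined in \eqref{e:Banach2} and \eqref{e:Banach4} in place of their triple analogues that treat $V$ as a free variable. Concretely, I set $V_0=V$, and at a fixed smooth minimizer $\Sigma$ of \eqref{e:isop} for $g_0$ with $|\Sigma|_{g_0}=V$, I work in pairs $(f,g)$ constrained by $|\Sigma+f|_g=V$. The projection of interest is now $\Pi(f,g)=g$ onto $\Gamma$ alone.

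First, I would establish the analogue of \cref{p:local_critical_points}: $\cM_{\delta_1}(\Sigma,g_0)$ is a separable smooth Banach submanifold of $\cB_\delta(\Sigma,g_0)$, and $\Pi$ is Fredholm of index $0$. I would apply \cite[Theorem 1.2]{White_b2} with $X=\cB_\delta(\Sigma,g_0)$, parameter space $\Gamma$, and $H(f,g)=\nabla_u\cP^g(\Sigma+f)$; the fibred derivative $\partial_f H(0,g_0)$ is the Jacobi operator $J_\Sigma$ restricted to zero-average functions, which is self-adjoint Fredholm of index $0$. Then, as in \cref{p:final_manifold}, I would aggregate over local charts to form, for each diffeomorphism type $N^{n-1}$, a smooth Banach manifold $\cM(N,V)$ of critical boundaries of $g$-volume $V$ with the associated Fredholm-index-$0$ projection $\Pi_N \colon \cM(N,V)\to\Gamma$. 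Sard--Smale \cite[Theorem 1.3]{SaSm} then yields an open dense set of regular values $\cG_N\subset\Gamma$. By \cref{l:finite_components}, minimizers have uniformly boundedly many components, so $\cG:=\bigcap_i \cG_{N_i}$ over the countably many diffeomorphism types $N_i$ is an open dense subset of $\Gamma$. For $g\in\cG$, each minimizer $\Sigma\in\cA^g_V$ of \eqref{e:isop} is a regular value of the relevant $\Pi_N$, hence strictly stable, and by compactness combined with \cref{l:finite_components} there are only finitely many such minimizers. Applying \eqref{e:use_ss} of \cref{l:LvsF} at each one and taking the minimum of the finitely many resulting constants gives the desired $C(g,V)>0$.

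The main obstacle is the verification of White's condition (C) with the volume coordinate frozen. For every nonzero $v\in\ker\partial_f H(0,g_0)$, I want to produce a curve in $\cB_\delta(\Sigma,g_0)$ along which the mixed second derivative of the perimeter is nonzero. Following the proof of \cref{p:local_critical_points}, the family $g(s)=(1+s\varphi)g_0$ with $\varphi$ supported away from $\partial\Sigma$ gives $\partial_s\partial_t|_{s=t=0}\cP^{g(s)}(\Sigma+tv)\neq 0$; but here, unlike in \cref{p:local_critical_points}, we cannot absorb the resulting change in $g$-volume into a free parameter $V(s,t)$. I would instead use the implicit function theorem to find a smooth correction $c(s,t)\in\R$ with $c(0,0)=0$ and $(tv+c(s,t),g(s))\in\cB_\delta(\Sigma,g_0)$. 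Since the volume constraint factors as $\int tv\,d\sigma_{g_0}+O(s+t^2)$ vanishing (and $\int v\,d\sigma_{g_0}=0$), $c$ is of order $\max(s^2,st,t^2)$, so it does not contribute to the cross-derivative $\partial_s\partial_t$. This confirms condition (C) and closes the argument.
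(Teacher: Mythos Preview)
Your approach is exactly what the paper intends: its own proof of this corollary is a single sentence telling the reader to rerun the argument of \cref{thm:generic-isop} with $\cB_r(\Sigma,g_0)$ in place of $\cB_r(\Sigma,g_0,V_0)$, and your outline (local Fredholm structure via White, globalization over diffeomorphism types, Sard--Smale, then \eqref{e:use_ss} at each of finitely many strictly stable minimizers) is the right way to flesh that out.

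There is, however, a slip in your verification of condition~(C). First, $\varphi$ should vanish \emph{on} $\partial\Sigma$ (as in \cref{p:local_critical_points}), not be supported away from it; if $\varphi\equiv 0$ in a neighborhood of $\partial\Sigma$ then $g(s)=g_0$ there and the mixed derivative $\partial_s\partial_t\cP^{g(s)}(\Sigma+tv)$ vanishes. Second, your claim that $c(s,t)=O(s^2+st+t^2)$ is not correct: the linearized volume defect is
\[
c\,|\partial\Sigma|_{g_0}\;+\;t\!\int_{\partial\Sigma}\! v\,d\sigma_{g_0}\;+\;s\cdot\tfrac{n}{2}\!\int_\Sigma\!\varphi\,d\mathrm{vol}_{g_0}\;+\;O(\text{quadratic}),
\]
and while $\int v=0$ kills the $t$-term, nothing kills the $s$-term, so in general $\partial_s c(0,0)=-\tfrac{n}{2|\partial\Sigma|}\int_\Sigma\varphi\neq 0$. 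This feeds an extra contribution $\nabla^2\cP^{g_0}(\Sigma)[v,1]\cdot\partial_s c(0,0)$ into the cross-derivative, which need not vanish (note $\nabla^2\cP^{g_0}(\Sigma)[v,1]=\int_{\partial\Sigma}J_\Sigma v$, and $v\in K$ only forces $J_\Sigma v$ to be constant, not zero). The repair is painless: modify $\varphi$ by a bump supported well inside $\Sigma$ and away from $\partial\Sigma$---which does not affect $\partial_s\partial_t\cP^{g(s)}(\Sigma+tv)$---so as to enforce $\int_\Sigma\varphi\,d\mathrm{vol}_{g_0}=0$. Then $\partial_s c(0,0)=0$ and your conclusion goes through as written.
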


\appendix

\section{Proof of \cref{l:LS,l:LS+int}}\label{app:lyapunov_schmidt}

In this section we prove the Lyapunov-Schmidt reduction and its version in the integrable case 

\begin{proof}[Proof of Lemma \ref{l:LS}]
	Recall that $K:=\ker \nabla_\cB^2 \cP(\Sigma)\subset T_0\cB(\Sigma)$ and define the operator
	$$
	\cN(\zeta) := P_{K^\perp} \nabla_\cB \cP(\Sigma+\zeta) + P_K \zeta\,,
	$$ 
	where $P_K, P_{K^\perp}$ denote the projections on $K,K^\perp$ with respect to the inner product of $L^2(\Sigma)$.
	Since $\Sigma$ is a critical point for $\cP$ restricted to $\cB(\Sigma)$, we have that $\mathcal N(0) = 0$. Furthermore, 
	$$
	\nabla \mathcal N(0)[\zeta] = \frac{d}{dt} \mathcal N(t\zeta)|_{t= 0} = P_{K^\perp} \nabla^2_\cB \cP(\Sigma)[\zeta, -] + P_K \zeta.
	$$ 
	In particular $\nabla \mathcal N(0)$ has trivial kernel. We observe that $\nabla^2_\cB \cP(\Sigma)=J_\Sigma$ restricted to $T_0\cB(\Sigma)$, that is to smooth functions with zero average, therefore we can apply Schauder estimates to obtain that $\nabla \mathcal N(0)$ is an isomorphism (in a neighborhood of zero) from $C^{2,\alpha}(\de \Sigma)\cap T_0\cB(\Sigma)$ to $C^{0,\alpha}(\de \Sigma)\cap T_0\cB(\Sigma)$. 
	
	We apply the inverse function theorem to $\mathcal N$ in this neighborhood, producing the map $\Psi := \mathcal N^{-1}$ which is a bijection from a neighborhood of $0$, $W \subset C^{0,\alpha}(\de \Sigma)\cap T_0\cB(\Sigma)$ to $U$, a neighborhood of $0$ in $C^{2,\alpha}(\de \Sigma)\cap \cB(\Sigma)$. We claim that our desired map is given by 
	$$
	\Upsilon := P_{K^\perp}\circ \Psi: K \rightarrow K^\perp\cap \cB(\Sigma)\,.
	$$ 
	In particular, for $\zeta \in K$ we have $\Psi(\zeta) = \zeta + \Upsilon(\zeta)$. The first conclusion of \eqref{e:upsilonatzero} is trivial as $\Upsilon(0) = \Upsilon(\mathcal N(0)) = P_{K^\perp}(\Psi(\mathcal N(0))) = 0$.
	
	To check \eqref{e:LSorth}, we first notice that 
	\begin{equation}\label{e:zetaintermsofpsi}
	\zeta = \mathcal N(\Psi(\zeta)) = P_{K^\perp}\nabla \cP(\Sigma+\Psi(\zeta)) + P_K\Psi(\zeta).
	\end{equation} 
	Applying $P_K$ or $P_{K^\perp}$ to both sides of this equation we get 
	$$
	P_{K} \zeta = P_K \Psi(\zeta) \qquad\mbox{and}\qquad P_{K^\perp} \zeta =  P_{K^\perp}\delta \cP(\Sigma+\Psi(\zeta)).
	$$ 
	Plugging the first identity into the second we obtain 
	$$P_{K^\perp} \zeta = P_{K^\perp} \nabla_\cB\cP(\Sigma+P_K \zeta + \Upsilon(\zeta)),
	$$ 
	which implies, for $\zeta \in K\cap U$, that 
	$$0 = P_{K^\perp} \nabla_\cB \cP(\Sigma+\zeta + \Upsilon(\zeta)).
	$$ 
	
	To prove the second line of \eqref{e:LSorth}, we compute, for any $\eta \in K$; 
	$$
	\begin{aligned} 
	\left\langle \nabla F(\zeta),\eta\right\rangle =
	& \nabla_\cB \cP(\Sigma+\zeta + \Upsilon(\zeta))[\eta+ \nabla \Upsilon(\zeta)[\eta]]\\
	=& \nabla_\cA \cP(\Sigma+\zeta + \Upsilon(\zeta))[\eta],
	\end{aligned}
	$$ 
	which implies the second claim of \eqref{e:LSorth} (as $\eta \in K$ is arbitrary). The second inequality above follows from the fact that $\nabla \Upsilon(\zeta)[\eta] \in K^\perp$ (as the image of $\Upsilon$ is in $K^\perp$) and then from the first line of \eqref{e:LSorth}.
	
	To prove  \eqref{e:critical_set} we turn to \eqref{e:zetaintermsofpsi}. Let $\Sigma+\eta$ be an arbitrary critical point of $\cP$ in a neighborhood of zero. We write $\eta = \Psi(\zeta)$, and \eqref{e:zetaintermsofpsi} reads $\zeta = P_K \eta$. This implies 
	$$
	\eta = P_K \eta + P_{K^{\perp}} \eta = \zeta + P_{K^\perp} \Psi(\zeta) = \zeta + \Upsilon(\zeta),
	$$ 
	as desired (the condition on $\nabla F$ follows trivially from \eqref{e:LSorth}).  To show \eqref{e:p=cost} we recall that, by the gradient version of the \L ojasiewicz inequality, there exist $\gamma_0,C_0,\delta_0$, depending on $\Sigma$ such that
	$$
	|P(\xi)-P(0)|^{1-\gamma}\leq C_0\, |\nabla P(\xi)| \qquad \mbox{for every }\xi\in B_{\delta_0}\,,
	$$ 
	from which $P(\xi)=P(0)$ as long as $\xi$ is a critical point of $P$, as desired.
	 
	To prove \eqref{e:est_upsilon} we write 
\begin{align*}
	\eta = \nabla_\cB \mathcal N(\Psi(\zeta))[\nabla_\cA \Psi(\zeta)[\eta]] & = P_K \nabla_\cB \Psi(\zeta)[\eta] \\ & \qquad + P_{K^\perp}\nabla^2_\cB \cP(\Sigma+\Psi(\zeta))[\nabla_\cB\Psi(\zeta)[\eta], -],
\end{align*}
	which implies 
	\begin{align*}
	&P_K \nabla_\cB \Psi(\zeta)[\eta] + P_{K^\perp} \nabla^2_\cB \cP(\Sigma)[\nabla_\cB \Psi(\zeta)[\eta],-] \\
	&= \eta + P_{K^\perp}\left(\nabla^2_\cB \cP(\Sigma) - \nabla^2_\cB \cP(\Sigma+\Psi(\zeta))\right)[\nabla_\cB \Psi(\zeta)[\eta], -].
	\end{align*}
	Applying $P_K$ to both sides of the above equation we get $P_K \nabla_\cB \Psi(\zeta)[\eta] = P_K \eta = \eta$. Applying $P_{K^\perp}$ to both sides and taking $C^{0,\alpha}$ norms, yields  
	$$
	\begin{aligned} 
	\|\nabla_\cB\Upsilon(\zeta)[\eta]\|_{C^{1,\alpha}} \leq
	& \|P_{K^\perp} \nabla^2_\cB \cP(\Sigma)[\nabla_\cB \Psi(\zeta)[\eta],-]\|_{C^{0,\alpha}}\\ 
	\leq& \|P_{K^\perp}\left(\nabla^2_\cB \cP(\Sigma) - \nabla_\cB^2 \cP(\Sigma+\Psi(\zeta))\right)[\nabla_\cB \Psi(\zeta)[\eta], -]\|_{C^{0,\alpha}}\\ 
	\leq& \varepsilon \|\nabla_\cB\Psi(\zeta)[\eta]\|_{C^{1,\alpha}},
	\end{aligned}
	$$ 
	where $\varepsilon > 0$ is a constant which can be taken arbitrarily small with the size of the neighborhood $U$. Note the first inequality above follows from Schauder estimates on the operator $\nabla^2_\cB\cP(\Sigma)$. 
	Writing $\nabla_\cB \Psi(\zeta)[\eta] = \nabla \Upsilon(\zeta)[\eta] + P_K \nabla_\cB\Psi(\zeta)[\eta]$ we have 
	$$ 
	\|\nabla \Upsilon(\zeta)[\eta]\|_{C^{2,\alpha}} \leq C \|P_K \nabla_\cB \Psi(\zeta)[\eta]\|_{C^{1,\alpha}} \simeq \|P_K \nabla_\cB \Psi(\zeta)[\eta]\|_{C^{0,\alpha}},
	$$ 
	as $P_K$ is a finite dimensional projection (so all norms are equivalent). Recalling the above observation, that $P_K \nabla_\cB \Psi(\zeta)[\eta] = \eta$, finishes the proof.
	
	Finally, to prove \eqref{e:stabby} we notice that, since $\Sigma$ is a minimizer of $\cP$ in $\cB(\Sigma)$, there exists a constant $C$, depending on $\Sigma$, such that 
	\begin{equation}\label{e:stab}
	\nabla^2_\cB \cP(\Sigma)[\eta,\eta]\geq C\,\|\eta\|_{W^{1,2}}^2 \qquad \forall \eta \in K^\perp\,.
	\end{equation}
	Then we can use a simple Taylor expansion to deduce, with the notation $u^\perp:= u-u_\cL=u-P_K u-\Upsilon(P_ku)\in K^\perp$, that
	\begin{align}\label{e:perp}
	\cP&(\Sigma+u)-\cP(\Sigma +u_\cL)\\
	&=\nabla_\cB \cP(\Sigma+u_\cL)[u^\perp]+\nabla^2_\cB \cP(\Sigma+u_\cL)[u^\perp,u^\perp] + o\left(\|u^\perp\|^2_{W^{1,2}}  \right)\notag\\
	&= \underbrace{\langle  \nabla_\cB \cP(\Sigma+u_\cL), u^\perp  \rangle_{L^2}}_{\stackrel{\eqref{e:LSorth}}{=}0}+\nabla_\cB^2 \cP(\Sigma+u_\cL)[u^\perp,u^\perp] + o\left(\|u^\perp\|^2_{W^{1,2}}  \right)\notag\\
	&=\nabla_\cB^2 \cP(\Sigma)[u^\perp,u^\perp]-\left(\nabla_\cB^2 \cP(\Sigma)[u^\perp,u^\perp]-\nabla^2_\cB \cP(\Sigma+u_\cL)[u^\perp,u^\perp] \right)\notag\\
	& \qquad + o\left(\|u^\perp\|^2_{W^{1,2}}  \right)\notag\\
	&\stackrel{\eqref{e:stab}}{\geq} C\, \|u^\perp\|_{W^{1,2+}}^2
	\end{align}
	where the last inequality follows by the continuity of $\nabla^2_\cB \cP$ at $\Sigma$ by choosing the norm of $u$, and so $W$, small enough, together with \eqref{e:LSorth}.
\end{proof}

Next we prove the integrable and strictly stable versions of the Lyapunov-Schmidt reduction, which are a simple modification of the argument above essentially already contained in \cite{AdSi}.

\begin{proof}[Proof of Lemma \ref{l:LS+int}]
	The integrability condition is equivalent to
	\begin{equation}\label{e:int2}
	\forall \phi\in K \quad \exists (\Psi_s)_{s\in(-1,1)}\subset C^2(\Sigma, \Sigma^\perp)\,\,\mbox{s.t. }
	\begin{cases}
	\lim_{s\to 0}\Psi_s=0\\
	\nabla_\cB\cP(\Psi_s)=0\quad \mbox{for }s\in(-1,1)\\
	\displaystyle\frac{d}{ds}\Big|_{s=0}\Psi_s=\lim_{s\to 0}\frac{\Psi_s}{s}=\phi \,.
	\end{cases}
	\end{equation}  
	Assume \eqref{e:int2} holds, and recall the definition $P(\mu) = \cP(\Sigma+\mu + \Upsilon(\mu))$. If $F \equiv 0$ in a neighborhood of zero then we are done. Otherwise we can write $P(\mu) = P_p(\mu) + P_R(\mu)$ where, $P_p \not\equiv 0$,  $P_p(\lambda \mu) = \lambda^p P(\mu/|\mu|)$ for $\lambda > 0$ and $P_R(\mu)$ is the sum of homogeneous polynomials of degrees $\geq p+1$ (here we use the analyticity of $P$). Note that there exists some $\phi \in K$ such that $\nabla F_p(\phi) \neq 0$. Let $\Psi_s$ be the one-parameter family of critical points that is generated by $\phi$ (as in \eqref{e:int2}). 
	
	\noindent As $\Psi_s$ is a critical point, Lemma \ref{l:LS} allows us to write $\Psi_s = \phi_s + \Upsilon(\phi_s)$ where $\phi_s \in K$ and $\frac{\phi_s}{s} \rightarrow \phi$ as $s\downarrow 0$. Computing 
	$$
	0 = \nabla_\cB \cP(\Psi_s) = \nabla F(\phi_s) = \nabla P_p(\phi_s) + \nabla P_R(\phi_s) = s^{p-1}\nabla P\left(\frac{\phi}{|\phi|}\right) + o(s^{p-1}).
	$$ 
	Divide the above by $s^{p-1}$ and let $s\downarrow 0$ to obtain a contradiction to $\nabla P_p(\phi) \neq 0$. 
	
	In the other direction assume that $F \equiv 0$ in a neighborhood of $0$. This implies that $\nabla F \equiv 0$ in a (perhaps slightly smaller) neighborhood of $0$. Therefore, for any $\mu \in K$, letting $\Psi_s = s\mu + \Upsilon(s\mu)$ and recalling \eqref{e:est_upsilon} establishes \eqref{e:int2}.
	
	Next, since we have proven that $P$ is constant on $\cC$, \eqref{e:int} follows immediately from \eqref{e:stabby} and the fact that $u_\cL \in \cC$.
	\medskip
	
	Finally, if $\Sigma$ is strictly stable, then $K=\{0\}$ which immediately implies $\cC=\{0\}$ and so $u_\cL=0$, which gives \eqref{e:strict}.
\end{proof} 

\section{Proof of \cref{p:sel_pri}}\label{ss:selection_principle}

The proof of \cref{p:sel_pri} is obtained combining results from \cite{CiLe,InMa,Wh2} and we will recall the fundamental steps over the following subsections, leaving many standard details to the reader. The basic idea is that the $E_k$ will be minimizers to a penalized version of energy in \eqref{e:QforE}, where the penalization guarantees that we recover $\inf_{\tilde{\Sigma} \in \cM\cap W_\delta} \cQ(\tilde{\Sigma},\gamma)$ in the limit. 

The existence of the $E_k$ and the fact that they satisfy properties (i) and (ii) of \cref{p:sel_pri} is covered in \cref{p:minofpenal}. The smooth convergence of property (iii) of \cref{p:sel_pri} is proven in \cref{l:smoothconv}. 

For simplicity of notation, in this section we will denote
$$
\alpha(E):=\alpha_\delta(E),
\qquad \cA:=\cA_{V_0},
\qquad W:=W_\delta.
$$
We emphasize that we are \emph{assuming} that $\cQ(\Sigma,\gamma) < \infty$ in this section. 
	
	Before starting the proof we observe the following simple facts.

\begin{lemma}[Properties of $\cQ(-,\gamma)$]
\label{r:propofQ}	
The energy $\cQ(-,\gamma)$ satisfy the following properties.
	\begin{itemize}
		\item If $\alpha_\delta(E) > 0, E \subset \cA$, then $\displaystyle{\cQ(E,\gamma) = \frac{\delta \cP(E)}{\alpha(E)^{2+\gamma}}}$.
		\item If $E_k \subset \cA$ and $E_k \stackrel{L^{1}}{\longrightarrow} E$, then $\cQ(E, \gamma) \leq \liminf_{k} \cQ(E_k,\gamma)$. This follows from the lower-semicontinuity of perimeter and a diagonal argument. 
	\end{itemize}
\end{lemma}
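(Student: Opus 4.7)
The plan is straightforward: both statements follow from the definition of $\cQ$ combined with two standard facts, namely lower-semicontinuity of the perimeter functional on sets of finite perimeter with fixed volume, and the fact that $\alpha_\delta$ is $1$-Lipschitz on $\cA$ with respect to the $L^{1}$-metric induced by symmetric differences.

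For the first bullet, I would check the two inequalities separately. The upper bound $\cQ(E,\gamma) \leq \delta\cP(E)/\alpha(E)^{2+\gamma}$ comes from testing the infimum against the constant sequence $F_k \equiv E$, which is an admissible competitor precisely because the assumption $\alpha(E) > 0$ rules out $E \in \cM^g_{V_0}$. For the matching lower bound, I would take any admissible competitor $\{F_k\} \subset \cA$ with $|F_k \Delta E|_g \to 0$ and combine (i) lower-semicontinuity of perimeter, giving $\delta\cP(E) \leq \liminf_k \delta\cP(F_k)$, with (ii) the triangle inequality for symmetric differences applied uniformly over the candidate set $\cM^g_{V_0}$, which yields the estimate $|\alpha_\delta(F_k) - \alpha_\delta(E)| \leq |F_k \Delta E|_g \to 0$, and in particular $\alpha(F_k)^{2+\gamma} \to \alpha(E)^{2+\gamma} > 0$. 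Dividing and passing to the liminf then produces the claimed lower bound.

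For the second bullet, the plan is a standard diagonal extraction. I would first pass to a subsequence of $\{E_k\}$ so that the liminf on the right-hand side is realized as a limit. For each $k$, the definition of $\cQ(E_k,\gamma)$ produces a sequence $\{F_{k,j}\}_{j} \subset \cA$ with $\alpha(F_{k,j}) > 0$, $|F_{k,j}\Delta E_k|_g < 1/j$, and $\delta\cP(F_{k,j})/\alpha(F_{k,j})^{2+\gamma} < \cQ(E_k,\gamma) + 1/j$. I would then pick $j(k) \to \infty$ slowly enough that $|F_{k,j(k)}\Delta E|_g \leq |F_{k,j(k)}\Delta E_k|_g + |E_k\Delta E|_g \to 0$, so that $\{F_{k,j(k)}\}_k$ is admissible in the infimum defining $\cQ(E,\gamma)$; comparing along this diagonal sequence gives $\cQ(E,\gamma) \leq \liminf_k (\cQ(E_k,\gamma) + 1/j(k)) = \liminf_k \cQ(E_k,\gamma)$.

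The only subtlety worth flagging is making sure the admissibility condition $\alpha(\cdot) > 0$ is preserved along the approximating and diagonal sequences, but this is automatic from the construction. No geometric information about isoperimetric regions beyond what is already encoded in the definitions of $\cA$, $\cM^g_{V_0}$ and $\alpha_\delta$ enters either argument, so I would not expect either step to be a serious obstacle.
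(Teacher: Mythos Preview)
Your proposal is correct and matches the paper's approach exactly: the paper does not give a separate proof of this lemma beyond the inline hints (``lower-semicontinuity of perimeter and a diagonal argument''), and your write-up is precisely the natural expansion of those hints. The only cosmetic point is that in the diagonal step you should note that if $\liminf_k \cQ(E_k,\gamma)=\infty$ there is nothing to prove, so that along the chosen subsequence the $\cQ(E_k,\gamma)$ are eventually finite and the approximating sequences $\{F_{k,j}\}_j$ actually exist; this is implicit in your argument but worth one sentence.
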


\subsection{The Penalized Minimization Problem} By the definition of $\cQ(\tilde{\Sigma},\gamma)$ and a diagonal argument, there exists $\{W_j\}_j \subset \cA$ such that \begin{equation}\label{e:Wjdef} \begin{aligned} 
\left| \cQ(W_j, \gamma)- \inf_{\tilde{\Sigma} \in \cM \cap W_{}} \cQ(\tilde{\Sigma},\gamma)\right| <& \frac{1}{j},\quad
0<\alpha(W_j) < 1, \quad
\alpha(W_j)  \rightarrow 0\,.
\end{aligned}
\end{equation}
We want to ``regularize" these $W_j$ and so we introduce the following penalized functionals
\begin{equation}\label{e:Qjdef}
\cQ_j(E,\gamma):=\cQ(E,\gamma)+\left(\frac{\alpha(E)}{\alpha(W_j)}-1\right)^{2}
\end{equation}
where $(W_j)_j$ is as in \eqref{e:Wjdef}. The content of the following proposition is that minimizers to  $\cQ_j(-, \gamma)$ exist and are also an approximating sequence for $\inf_{\tilde{\Sigma} \in \cM\cap W_\delta} \cQ(\tilde{\Sigma},\gamma)$ (i.e. they satisfy \eqref{e:Wjdef}).

\begin{proposition}[Minimizers of $\cQ_j$]\label{p:minofpenal}
	There exists sets of finite perimeter $\{E_j\}_j \subset \cA$ such that for each $j$, $\cQ_j(E_j, \gamma) \leq \cQ_j(S, \gamma)$ for all other sets $S \in \cA$. Furthermore, 
	$$
	\alpha(E_j) > 0,\quad \alpha(E_j) \rightarrow 0 \qquad \mbox{and} \qquad \left| \cQ(E_j, \gamma)- \inf_{\tilde{\Sigma} \in \cM\cap W_{}} \cQ(\tilde{\Sigma},\gamma)\right| \rightarrow 0.
	$$ 
	Finally, perhaps passing to a subsequence, $E_j  \stackrel{L^{1}}{\longrightarrow} \Sigma_0$ where $\Sigma_0 \in \cM\cap W$ is smooth and $\cQ(\Sigma_0, \gamma) = \inf_{\tilde{\Sigma} \in \cC\cap W} \cQ(\tilde{\Sigma},\gamma)$. 
\end{proposition}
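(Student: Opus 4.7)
I would construct $E_j$ by the direct method applied to $\cQ_j(\cdot,\gamma)$ on $\cA \cap W$. For any minimizing sequence $\{F_k^{(j)}\}_k$, the competitor $W_j$ gives
\[
\cQ_j(F_k^{(j)},\gamma) \leq \cQ_j(W_j,\gamma) = \cQ(W_j,\gamma) \leq \inf_{\tilde\Sigma \in \cM \cap W}\cQ(\tilde\Sigma,\gamma) + \tfrac{1}{j},
\]
so both $\cQ(F_k^{(j)},\gamma)$ and the penalty $(\alpha(F_k^{(j)})/\alpha(W_j)-1)^2$ stay bounded. In particular, $\alpha(F_k^{(j)})$ remains in a compact interval inside $(0,2V_0]$, and $\delta\cP(F_k^{(j)}) \leq C\,\alpha(F_k^{(j)})^{2+\gamma}$ is bounded, so BV compactness yields an $L^1$-limit $E_j \in \cA \cap W$. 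Using the lower semicontinuity of $\cQ(\cdot,\gamma)$ from \cref{r:propofQ} together with the $L^1$-continuity of $\alpha$ and $|\cdot|_g$, $E_j$ minimizes $\cQ_j(\cdot,\gamma)$.

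\textbf{Properties (i) and (ii).} To see $\alpha(E_j)>0$, I would argue by contradiction: if $\alpha(E_j)=0$ then $E_j \in \cM \cap W$ and the penalty term equals $1$, giving $\cQ_j(E_j,\gamma) \geq \inf_{\tilde\Sigma}\cQ(\tilde\Sigma,\gamma) + 1$. But $\cQ_j(E_j,\gamma) \leq \cQ(W_j,\gamma) \to \inf_{\tilde\Sigma}\cQ(\tilde\Sigma,\gamma)$, a contradiction for $j$ large. The sandwich
\[
\inf_{\tilde\Sigma}\cQ \leq \cQ(E_j,\gamma) \leq \cQ_j(E_j,\gamma) \leq \cQ(W_j,\gamma) \longrightarrow \inf_{\tilde\Sigma}\cQ
\]
then gives (ii) and simultaneously forces $\alpha(E_j)/\alpha(W_j) \to 1$, hence $\alpha(E_j) \to 0$.

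\textbf{Compactness and smoothness of the limit.} Since $\cP(E_j) = \cI(V_0) + \cQ(E_j,\gamma)\,\alpha(E_j)^{2+\gamma} \to \cI(V_0)$, a further subsequence converges in $L^1$ to some $\Sigma_0 \in \cA$, and lower semicontinuity of perimeter forces $\Sigma_0 \in \cM$. Since $\alpha(E_j)\to 0$, $\Sigma_0 \in \cM \cap W$, and by choosing $\delta$ sufficiently small and applying $\varepsilon$-regularity (using that $\Sigma_0$ is $L^1$-close to the smooth $\Sigma$), I obtain that $\partial\Sigma_0$ is smooth. The lower-semicontinuity bullet of \cref{r:propofQ} gives $\cQ(\Sigma_0,\gamma) \leq \liminf_j \cQ(E_j,\gamma) = \inf_{\tilde\Sigma}\cQ$, so $\Sigma_0$ attains the infimum.

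\textbf{Smooth convergence -- the main obstacle.} The hardest step is (iii): realizing $E_j = \Sigma_0 + u_j$ with $\|u_j\|_{C^{1,\alpha}} \to 0$. The plan is to extract from the minimality of $E_j$ a uniform $(\Lambda,r_0)$-quasi-minimality condition for perimeter, and then apply standard density estimates and Allard-type $\varepsilon$-regularity (cf.\ \cite{Mag}), together with the $L^1$-convergence $E_j \to \Sigma_0$, to upgrade to $C^{1,\alpha}$ graphical convergence. For a volume-preserving competitor $F$ with $|F\Delta E_j|$ small, the inequality $\cQ_j(E_j,\gamma) \leq \cQ_j(F,\gamma)$ combined with the $1$-Lipschitz estimate $|\alpha(F)-\alpha(E_j)| \leq |F\Delta E_j|$ can be rearranged to
\[
\cP(E_j) \leq \cP(F) + \Lambda_j\,|E_j \Delta F|.
\]
The delicate point is that $\alpha(E_j)^{2+\gamma} \to 0$ appears as a denominator, so naive rearrangement makes $\Lambda_j$ diverge; the key is to show that the vanishing of $\delta\cP(E_j)$, the boundedness of $\cQ(E_j,\gamma) \to \inf\cQ$, and $\alpha(E_j)/\alpha(W_j) \to 1$ conspire to cancel the blow-up, yielding $\Lambda_j$ uniform in $j$. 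The volume constraint $|F|_g = V_0$ is enforced by the standard volume-fixing flow supported in a small region where $\partial E_j$ is already known to be regular (by the preliminary $L^1$-closeness to the smooth $\partial\Sigma_0$), absorbing the correction into $\Lambda$. Once uniform quasi-minimality is in hand, density estimates give Ahlfors regularity of $\partial E_j$, and excess decay converts $L^1$-convergence $\partial E_j \to \partial\Sigma_0$ into $C^{1,\alpha}$ graphical convergence, completing (iii).
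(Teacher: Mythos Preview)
Your approach matches the paper's almost exactly: direct method for existence, the contradiction via the penalty term equal to $1$ for $\alpha(E_j)>0$, and lower semicontinuity to identify $\Sigma_0$ as an infimizer. Two points deserve correction.

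First, the sandwich
\[
\inf_{\tilde\Sigma}\cQ \;\leq\; \cQ(E_j,\gamma) \;\leq\; \cQ_j(E_j,\gamma) \;\leq\; \cQ(W_j,\gamma)
\]
is not valid pointwise: for fixed $j$, $E_j\notin\cM\cap W$, so there is no reason $\cQ(E_j,\gamma)\geq\inf_{\tilde\Sigma\in\cM\cap W}\cQ(\tilde\Sigma,\gamma)$. The paper instead argues in the order (1) boundedness of $\cQ_j(E_j,\gamma)$ bounds the penalty, hence $\alpha(E_j)/\alpha(W_j)$ stays bounded and $\alpha(E_j)\to 0$; (2) together with the bound on $\cQ(E_j,\gamma)$ this gives $\delta\cP(E_j)\to 0$ and $E_j\stackrel{L^1}{\to}\Sigma_0\in\cM\cap W$; (3) only then does lower semicontinuity (\cref{r:propofQ}) yield
\[
\inf_{\tilde\Sigma}\cQ \;\leq\; \cQ(\Sigma_0,\gamma) \;\leq\; \liminf_j \cQ(E_j,\gamma)\;\leq\;\limsup_j\cQ_j(E_j,\gamma)\;\leq\;\inf_{\tilde\Sigma}\cQ,
\]
forcing equality throughout and, as a byproduct, $\alpha(E_j)/\alpha(W_j)\to 1$. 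Your ingredients are all there but the deductions need this reordering.

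Second, your last section (``Smooth convergence --- the main obstacle'') does not belong to this proposition at all. \cref{p:minofpenal} only asserts $L^1$ convergence to a smooth $\Sigma_0$; the $C^{1,\alpha}$ graphical convergence is property (iii) of \cref{p:sel_pri} and is handled separately in the paper via \cref{l:minsarequasimins}, \cref{p:minsarealmostmins}, and \cref{l:smoothconv}. Your sketch of that argument is in the right spirit (uniform $\Lambda$-minimality plus $\varepsilon$-regularity), but it should be split off from the proof of the present proposition.
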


\begin{proof}
	The existence of a minimizer follows from BV-compactness and the lower semi-continuity of the energy $\cQ_j(-,\gamma)$ (see \cref{r:propofQ}, second bullet point). 
	
	If $\alpha(E_j) = 0$ for any  $j > 1$, then $E_j \in \cM\cap W$ and we have that 
	\begin{align*}
	\inf_{\tilde{\Sigma} \in \cM\cap W} \cQ(\tilde{\Sigma}, \gamma) 
	&\leq  \cQ(E_j, \gamma) = \cQ_j(E_j,\gamma) - 1 \leq \cQ_j(W_j, \gamma) - 1\\
	&= \cQ(W_j, \gamma) - 1 \leq \inf_{\tilde{\Sigma} \in \cC\cap W} \cQ(\tilde{\Sigma}, \gamma) + \frac{1}{j} - 1,
	\end{align*}
	which is a contradiction as long as $j >1$. 
	
	A similar argument shows that $\alpha(E_j) \rightarrow 0$. Indeed for any subsequence $E_{j_k}$ we have 
	\begin{align*}
	\lim_k \left(\frac{\alpha(E_{j_k})}{\alpha(W_{j_k})} - 1\right)^{2} 
	&\leq \lim_k \cQ_{j_k}(E_{j_k},\gamma) \leq \lim_k \cQ_{j_k}(W_{j_k},\gamma) = \lim_k \cQ(W_{j_k},\gamma) \\
	&= \inf_{\tilde{\Sigma} \in \cC\cap W} \cQ(\tilde{\Sigma},\gamma)< \infty.
	\end{align*}
	Since $\alpha(W_j) \rightarrow 0$ it follows that $\alpha(E_j) \rightarrow 0$. 
	
	Of course, we can similarly argue that 
	$$
	\cQ(E_j, \gamma) \leq \cQ_j(E_j, \gamma) \leq \cQ_j(W_j, \gamma) = \cQ(W_j, \gamma) \leq \inf_{\tilde{\Sigma} \in \cC\cap W} \cQ(\tilde{\Sigma},\gamma) + 1 < \infty\,,
	$$ 
	where we emphasize that we have assumed that $\cQ(\Sigma,\gamma) < \infty$.
	
	This implies that $\delta\cP(E_j) \rightarrow 0$
so $E_j  \stackrel{L^{1}}{\longrightarrow} \Sigma_0$ for some $\Sigma_0 \in \cM\cap W$. Note that $\partial \Sigma_0$ is automatically smooth by the definition of $W$ and the assumption that $\partial \Sigma$ is smooth.
	
	We have proven that the $E_j$ (perhaps passing to a subsequence) satisfy the requirements of an approximating sequence in the definition of $\cQ(\Sigma_0, \gamma)$. Therefore, 
	$$
	\begin{aligned}  \cQ(\Sigma_0,\gamma) \leq& \lim_j \cQ(E_{j}, \gamma) \leq \lim_j \cQ_{j}(E_{j}, \gamma)\\ \leq& \lim_j \cQ_{j}(W_{j}, \gamma) = \lim_j \cQ(W_{j},\gamma) \\
	=& \inf_{\tilde{\Sigma} \in \cC\cap W} \cQ(\tilde{\Sigma},\gamma).\end{aligned}
	$$
	This implies that
	$$
	\lim_j \cQ(E_j, \gamma) = \inf_{\tilde{\Sigma} \in \cC} \cQ(\tilde{\Sigma},\gamma)=\cQ(\Sigma_0,\gamma)
	$$ 
	and, finally, that 
	 \begin{equation}\label{e:alpharatio} 
	 \lim_{j\to \infty}\frac{\alpha(E_j)}{\alpha(W_j)}=1,
	 \end{equation}
	 completing the proof. 
\end{proof}

\subsection{Almost-Minimizers and Smoothness for the $E_j$} In this subsection we will prove that the $E_j$ satisfy the hypothesis of \cref{p:sel_pri}. Note that we only have to verify the smooth convergence property (property (iii)) as the first two properties are guaranteed by \cref{p:minofpenal}.

We will prove this smooth convergence by first showing that the $E_j$'s are \emph{almost-minimizers} for perimeter with uniform constants. Then smooth convergence will follow from regularity theory for almost-minimizers and a standard argument in the calculus of variations (see the proof of \cref{l:smoothconv} below for more details).

Our first lemma is that $E_j$ minimizes perimeter in the class $\cA$ up to an error which is proportional to the area of the symmetric distance between $E_j$ and the competitor. It is important to note that the constant of proportionality is uniform over the index. 

\begin{lemma}\label{l:minsarequasimins}
	There exists a $\Lambda > 0$ and $j_0 \in \mathbb N$ such that for all $F\in \cA$ and all $j \geq j_0$ we have $$\cP(E_j) \leq \cP(F) + \Lambda |E_j \Delta F|.$$
\end{lemma}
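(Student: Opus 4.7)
The plan is to exploit the $\cQ_j$-minimality of $E_j$ given by \cref{p:minofpenal}, using three quantitative facts at hand: (i) $\delta\cP(E_j)=\cQ(E_j,\gamma)\,\alpha(E_j)^{2+\gamma}$ with $\cQ(E_j,\gamma)$ uniformly bounded (since by assumption $\cQ(\Sigma,\gamma)<\infty$ and the $E_j$ form an approximating sequence for $\inf_{\tilde\Sigma\in \cM\cap W}\cQ(\tilde\Sigma,\gamma)$); (ii) $\alpha(E_j)\to 0$; (iii) $\alpha(E_j)/\alpha(W_j)\to 1$, cf.\ \eqref{e:alpharatio}. The last ingredient is the triangle inequality $|\alpha(F)-\alpha(E_j)|\leq |F\Delta E_j|$, immediate from the definition of $\alpha_\delta$ as an infimum of $L^1$-distances to elements of $\cM\cap W_\delta$. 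The argument will split according to whether $|F\Delta E_j|$ is comparable to, or much smaller than, $\alpha(E_j)$.

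First I would dispatch the ``far'' regime $|F\Delta E_j|\geq \tfrac12\alpha(E_j)$, where quasi-minimality is essentially free. The trivial bound $\cP(E_j)-\cP(F)\leq \delta\cP(E_j)$ combined with (i) yields
\[
\cP(E_j)-\cP(F)\;\leq\; C\,\alpha(E_j)^{2+\gamma}\;\leq\; 2C\,\alpha(E_j)^{1+\gamma}\,|F\Delta E_j|,
\]
and by (ii) the prefactor $2C\alpha(E_j)^{1+\gamma}$ is smaller than any preassigned constant for $j$ large.

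Next, in the ``near'' regime $|F\Delta E_j|<\tfrac12\alpha(E_j)$, the triangle inequality forces $\alpha(F)\in[\tfrac12\alpha(E_j),\tfrac32\alpha(E_j)]$. Without loss of generality $\cP(F)\leq \cP(E_j)$ (else the claim is trivial), so $\delta\cP(F)\leq \delta\cP(E_j)=O(\alpha(E_j)^{2+\gamma})$. Multiplying $\cQ_j(E_j,\gamma)\leq \cQ_j(F,\gamma)$ through by $\alpha(E_j)^{2+\gamma}$ and rearranging gives
\[
\delta\cP(E_j)\;\leq\;\Bigl(\tfrac{\alpha(E_j)}{\alpha(F)}\Bigr)^{2+\gamma}\delta\cP(F)\;+\;\alpha(E_j)^{2+\gamma}\!\left[\Bigl(\tfrac{\alpha(F)}{\alpha(W_j)}-1\Bigr)^{2}-\Bigl(\tfrac{\alpha(E_j)}{\alpha(W_j)}-1\Bigr)^{2}\right].
\]
The factorization $x^{2}-y^{2}=(x-y)(x+y)$, the triangle inequality, and (iii) bound the penalty bracket by $C|F\Delta E_j|/\alpha(W_j)$, so its overall contribution is $O(\alpha(E_j)^{1+\gamma}|F\Delta E_j|)=o(|F\Delta E_j|)$. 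For the leading term, the elementary inequality $(1+t)^{-(2+\gamma)}\leq 1+C|t|$ for $|t|\leq 1/2$ produces
\[
\Bigl(\tfrac{\alpha(E_j)}{\alpha(F)}\Bigr)^{2+\gamma}\delta\cP(F)\;\leq\;\delta\cP(F)+C\,\frac{|\alpha(F)-\alpha(E_j)|}{\alpha(E_j)}\,\delta\cP(F).
\]

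The expected main obstacle is this last term in the case $\alpha(F)<\alpha(E_j)$, where the ratio $\alpha(E_j)/\alpha(F)$ exceeds one and $\delta\cP(F)/\alpha(E_j)$ is not a priori small. This is precisely where the reduction $\delta\cP(F)\leq \delta\cP(E_j)$ is used crucially: combined with (i), the extra contribution becomes at most $C|F\Delta E_j|\,\alpha(E_j)^{1+\gamma}=o(|F\Delta E_j|)$, on par with the penalty contribution. Summing the estimates yields $\cP(E_j)\leq \cP(F)+\Lambda|F\Delta E_j|$ for any fixed $\Lambda>0$ and all $j\geq j_0(\Lambda)$ sufficiently large; in particular $\Lambda=1$ suffices.
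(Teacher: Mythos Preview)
Your argument is correct and follows essentially the same route as the paper's proof: both split into cases according to the size of $|E_j\Delta F|$ relative to $\alpha(E_j)$, dispatch the far case via the trivial bound $\cP(E_j)-\cP(F)\leq\delta\cP(E_j)$, and in the near case expand the inequality $\cQ_j(E_j,\gamma)\leq\cQ_j(F,\gamma)$, controlling the penalty term by factoring $x^2-y^2$ and the leading term via a mean-value estimate together with the reduction $\delta\cP(F)\leq\delta\cP(E_j)$. The only differences are cosmetic: the paper places the threshold at $\alpha(E_j)^{2+\gamma}$ rather than $\tfrac12\alpha(E_j)$, and your more careful bookkeeping yields the (correct but unneeded) sharpening that $\Lambda$ may be taken arbitrarily small for $j$ large.
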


\begin{proof}
	Without loss of generality we can assume that $\cP(F) \leq \cP(E_j)$. We also let $j_0$ be large enough such that 
	\begin{equation}\label{e:assonj0}
	\begin{aligned}
	 \alpha(E_j) &\leq \frac{1}{2}\\ 
	 |\alpha(E_j) - \alpha(W_j)| &\leq \frac{\alpha(W_j)}{2}\\
	\cQ(E_j, \gamma) &\leq  \inf_{\tilde{\Sigma} \cap W\in \cM} \cQ(\tilde{\Sigma},\gamma) + 1\,.
	\end{aligned}
	\end{equation}  
	Such a $j_0$ exists by \cref{p:minofpenal} and \eqref{e:alpharatio}. Next we distinguish two cases.
	
	\smallskip
	
	\noindent {\bf Case 1:} $\alpha(E_j)^{2+\gamma} \leq |E_j \Delta F|$. Since $\cQ(E_j,\gamma) \leq \inf_{\tilde{\Sigma} \in \cM\cap W} \cQ(\tilde{\Sigma},\gamma) + 1$ and $\alpha(E_j)>0$, we get 
	\begin{align}\label{e:case1reduction}
	\cP(E_j) &\leq \cI(V_{0}) +  \alpha(E_j)^{2+\gamma}\left(\inf_{\tilde{\Sigma} \in \cM\cap W} \cQ(\tilde{\Sigma},\gamma) + 1\right)\\ \notag
	&\leq  \cP(F)+ |E_j \Delta F|\left(\inf_{\tilde{\Sigma} \in \cM\cap W} \cQ(\tilde{\Sigma},\gamma) + 1\right)\\ \notag
	&\leq \cP(F)  + \Lambda |E_j \Delta F|,
	\end{align}
	completing the proof in this case. 
	
	\smallskip
	
	\noindent {\bf Case 2:}  $|E_j \Delta F| < \alpha(E_j)^{2+\gamma}$. We know the inequality $\cQ_j(E_j,\gamma) \leq \cQ_j(F,\gamma)$ which implies that \begin{align}\label{e:case2reduction}
	\cP(E_j) 
	&\leq \cP(F) + \underbrace{\delta\cP(F)\left(\frac{\alpha(E_j)^{2+\gamma}}{\alpha(F)^{2+\gamma}} - 1\right)}_{I} \\\notag
	&+ \underbrace{\alpha(E_j)^{2+\gamma} \left(\left(\frac{\alpha(F)}{\alpha(W_j)} - 1\right)^{2} - \left(\frac{\alpha(E_j)}{\alpha(W_j)} - 1\right)^{2}\right)}_{II}.
	\end{align}
	We can estimate $II$ in
	\eqref{e:case2reduction} as follows: 
	\begin{align}\label{e:estimateforII}
	II 
	&\leq \left(\frac{\alpha(E_j)}{\alpha(W_j)}\right)^2 \left(\alpha(F) + \alpha(E_j) - 2\alpha(W_j)\right)\left(\alpha(F) - \alpha(E_j)\right)\\
	& \leq C|\alpha(F) - \alpha(E_j)| \leq C|F \Delta E_j|,
	\end{align}
	where the second inequality follows from the estimates in \eqref{e:assonj0} and the last inequality follows from the triangle inequality. 
	In order to estimate $I$ we observe that, by assumption $\alpha(E_j) \leq 1/2$, so we have that 
	\begin{equation}\label{e:comp}
	|E_j \Delta F| \leq \alpha(E_j)^{2+\gamma} \leq \frac{1}{2}\alpha(E_j)\Rightarrow \frac{1}{2}\alpha(E_j) \leq \alpha(F) \leq 2 \alpha(E_j).
	\end{equation}
	It follows that
	\begin{align}\label{e:estimateforI}
	I &
	\leq \delta \cP(F)\frac{\alpha(E_j)^{2+\gamma} -\alpha(F)^{2+\gamma}}{\alpha(F)^{2+\gamma}} 
	\leq C \cQ(E_j,\gamma) \left(\alpha(E_j)^{2+\gamma} - \alpha(F)^{2+\gamma}\right)\notag\\
	&\leq C(\inf_{\tilde{\Sigma} \in \cM} \cQ(\tilde{\Sigma},\gamma) + 1)(\alpha(E_j) - \alpha(F)) \leq C |F \Delta E_j|,
	\end{align}
	where the second inequality follows from \eqref{e:comp} and the fact that $\cP(F)\leq \cP(E_j)$, while the third inequality comes from \eqref{e:assonj0} and the estimate $x^r - y^r \leq C(x-y)$ for $0 \leq y\leq x \leq 1$. 
	
	Putting \eqref{e:estimateforI} and \eqref{e:estimateforII} together with \eqref{e:case2reduction} finishes the proof of Case 2 and thus concludes the proof of the lemma. 
\end{proof}

In the following theorems we first prove that the $E_j$ are almost-minimizers (also known as $\Lambda$-minimizers) in the sense that $$\cP(E_j) \leq \cP(F) + Cr^{n}, \forall \chi_F= \chi_E\quad \text{in} \quad M\backslash B(x,r).$$ We invoke two results, those of \cite{Tamanini} and \cite{CiLe}, which are about almost and quasi-minimizers in $\R^n$. However, both theorems are local statements and any local statement about almost-minimizers in $\R^n$ also holds for almost-minimizers in a $C^3$-manifold (and vice versa). This can be seen by working local coordinates and freezing the metric. This introduces an error which is of an order comparable to the scale and thus does not change the almost-minimization property. 

Our first result is due to  Tamanini, \cite{Tamanini}, and states that sets which minimize (or minimize up to a lower order error) perimeter amongst sets of the same area are actually almost-area minimizers. It is important to note that the constants are independent of $j$. 

\begin{proposition}\label{p:minsarealmostmins}
	There exists a $C = C(M, \Sigma) > 0$, an $r_0 = r_0(M, \Sigma) > 0$, an $\alpha = \alpha(M) \in (0,1)$  and a $j_0 \in \mathbb N$ (which again depends on $\Sigma$) such that for all $j \geq j_0$, all $x \in U$and all $r < r_0$, if $\chi_F \in \mathrm{BV}(M)$ with $\chi_F = \chi_{E_j}$ on $M \backslash B(x, r)$, then \begin{equation}\label{e:almostmin} \cP(E_j) \leq \cP(F) + Cr^{n}.\end{equation}
\end{proposition}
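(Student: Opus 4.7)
My plan is to upgrade the volume-constrained quasi-minimality from \cref{l:minsarequasimins} to genuine almost-minimality by compensating for the volume defect of a localized competitor inside a small ``bubble'' placed far from the perturbation ball, at a perimeter cost that is linear in the volume change (hence $O(r^n)$).

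First I would set up a finite family of volume-adjusting vector fields. Since $\partial\Sigma_0$ is a smooth compact embedded hypersurface, I fix regular points $y_1,\dots,y_N \in \partial\Sigma_0$, radii $\rho_\ell > 0$, and vector fields $X_\ell \in C^\infty_c(B_{\rho_\ell}(y_\ell);TM)$ satisfying
\[
\int_{\Sigma_0}\mathrm{div}_{g}(X_\ell)\, d\mathrm{vol}_{g} = \int_{\partial\Sigma_0} X_\ell \cdot \nu_{\Sigma_0}\, d\cH^{n-1} = 1,
\]
arranged so that for every $x \in M$ and every $r \leq r_0$ at least one ball $B_{\rho_\ell}(y_\ell)$ is disjoint from $B(x,r)$. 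Given a competitor $F \in \mathrm{BV}(M)$ with $\chi_F=\chi_{E_j}$ outside $B(x,r)$, I pick such an $\ell$ and note $||F|_g - V_0| \leq C r^n$. Because $E_j \to \Sigma_0$ in $L^1$ and $|F\Delta E_j|\leq 2|B(x,r)|_g$, the quantity $\int_F \mathrm{div}(X_\ell)$ is uniformly close to $\int_{\Sigma_0}\mathrm{div}(X_\ell) = 1$ for $j$ large and $r$ small. Hence the implicit function theorem yields $t=t(F)$ with $|t| \leq C||F|_g - V_0| \leq C r^n$ such that $F':= \Phi^\ell_t(F)$, the time-$t$ flow of $X_\ell$ applied to $F$, satisfies $|F'|_g = V_0$, i.e.\ $F'\in\cA$. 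Standard first-variation estimates then give
\[
\cP(F') \leq \cP(F) + C|t|, \qquad |F\Delta F'| \leq C|t|,
\]
while $F'$ still agrees with $E_j$ outside $B(x,r)\cup B_{\rho_\ell}(y_\ell)$.

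Applying \cref{l:minsarequasimins} to $F' \in \cA$ (permissible for $j$ large) then yields
\[
\cP(E_j) \leq \cP(F') + \Lambda|E_j\Delta F'| \leq \cP(F) + C r^n + \Lambda\bigl(2|B(x,r)|_g + C r^n\bigr) \leq \cP(F) + C' r^n,
\]
which is the desired almost-minimality. The main obstacle is verifying that the volume-adjusting step is \emph{uniform} in $j$ and in $F$: one must check that $t\mapsto |\Phi^\ell_t(F)|_g$ has derivative bounded away from $0$ for all allowed competitors. This is where I exploit the $L^1$ closeness of both $E_j$ and $F$ to the smooth limit $\Sigma_0$, normalizing $X_\ell$ against $\Sigma_0$ (rather than the moving sets $E_j$) so the derivative remains near $1$. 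A secondary point is that the argument is local, so Tamanini's regularity theorem in $\R^n$ invoked via \cite{CiLe} carries over to the manifold setting by working in coordinate charts and freezing the metric, exactly as the surrounding text already notes.
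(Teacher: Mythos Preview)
Your argument is correct and is precisely the standard volume-fixing deformation underlying Tamanini's result; the paper does not give a proof here at all but simply invokes \cite{Tamanini} as a black box, so you are supplying what the paper omits. One point worth making explicit in your write-up: the ``standard first-variation estimates'' $\cP(F') \leq \cP(F) + C|t|$ and $|F\Delta F'| \leq C|t|$ require a uniform bound on $\cP(F;B_{\rho_\ell}(y_\ell))$, and this holds exactly because you chose $B_{\rho_\ell}(y_\ell)$ disjoint from $B(x,r)$, so that $F=E_j$ there and $\cP(E_j)$ is uniformly bounded (since $\delta\cP(E_j)\to 0$). With that remark added the proof is complete.
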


\cref{p:sel_pri} will now follow from standard facts about the regularity of almost-minimizers and smooth convergence. We write the formal statement here and collect the salient facts in the proof. 

\begin{lemma}\label{l:smoothconv}
	The $E_k$ satisfy condition (iii) of \cref{p:sel_pri}, that is if $\Sigma_0 \in \cM\cap W$ is as in \cref{p:minofpenal}, then there are functions $u_k\in C^{1,\alpha}(\partial \Sigma_0)$ such that $E_k:=\Sigma_0+u_k$ and $\|u_k\|_{C^{1,\alpha}}\to 0$ as $k\to \infty$.
\end{lemma}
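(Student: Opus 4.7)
The plan is to combine the almost-minimality property from \cref{p:minsarealmostmins} with standard regularity theory for $(\Lambda,r_0)$-almost minimizers of the perimeter functional, and then use the $L^1$-convergence $E_k \to \Sigma_0$ to upgrade to graphical $C^{1,\alpha}$-convergence.

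First, I would recall that by \cref{p:minofpenal} we have $E_k \stackrel{L^1}{\to} \Sigma_0$ with $\Sigma_0 \in \cM\cap W$ and $\partial\Sigma_0$ smooth, and by \cref{p:minsarealmostmins} each $E_k$ (for $k$ large) is an almost-minimizer of the perimeter with uniform constants $C, r_0$ and exponent related to the dimension. Working in local coordinates on $M$ so as to reduce to the Euclidean almost-minimizer setting, Tamanini's regularity theorem \cite{Tamanini} then provides uniform density estimates for $\partial^*E_k$ and, away from the singular set, uniform interior $C^{1,\alpha}$ estimates on $\partial^* E_k$ at a definite scale depending only on $M,g,\Sigma_0$ and the almost-minimization constants. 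In particular, since $2\leq n\leq 7$, the singular set is empty.

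Next I would argue that the $L^1$-convergence $E_k \to \Sigma_0$ together with the uniform density estimates forces $\partial E_k \to \partial\Sigma_0$ in the Hausdorff distance. Indeed, if some point $p_k \in \partial E_k$ stayed at a definite distance from $\partial\Sigma_0$, then by the uniform lower density bound a small ball around $p_k$ would carry a definite amount of $\partial^* E_k$ area and hence, by the isoperimetric-type inequality for almost-minimizers (relative isoperimetric inequality), a definite $L^1$-mismatch with $\Sigma_0$, contradicting $E_k \to \Sigma_0$ in $L^1$. Combined with the uniform $C^{1,\alpha}$ estimates this yields $C^{1,\alpha'}$-convergence of $\partial E_k$ to $\partial\Sigma_0$ (for any $\alpha'<\alpha$) as unoriented hypersurfaces.

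At this point, since $\partial\Sigma_0$ is smooth and compact, a standard tubular neighborhood argument gives $\delta_0>0$ such that any hypersurface $C^1$-close to $\partial\Sigma_0$ inside the $\delta_0$-tubular neighborhood is a normal graph over $\partial\Sigma_0$. The Hausdorff convergence places $\partial E_k$ inside this tubular neighborhood for $k$ large, and the uniform $C^{1,\alpha}$ bounds promote this to $u_k \in C^{1,\alpha}(\partial\Sigma_0)$ with $\partial E_k = \partial\Sigma_0 + u_k$ and $\|u_k\|_{C^{1,\alpha'}} \to 0$ by Arzelà--Ascoli (and the orientation is determined by the $L^1$-convergence of the enclosed sets). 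Finally, a simple interpolation between $\|u_k\|_{C^0}\to 0$ and the uniform $C^{1,\alpha}$ bound yields $\|u_k\|_{C^{1,\alpha}}\to 0$, completing the proof.

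The main technical point is ensuring that the constants in Tamanini's estimates are genuinely uniform in $k$; this is guaranteed precisely by \cref{l:minsarequasimins}--\cref{p:minsarealmostmins}, and the $L^1\to \text{Hausdorff}$ upgrade is the only nontrivial geometric input, which is why the lower density bound from almost-minimality is the key tool.
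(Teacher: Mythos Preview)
Your proposal is correct and follows essentially the same route as the paper's proof: upgrade $L^1$-convergence of the almost-minimizers $E_k$ to Hausdorff convergence via uniform density estimates, then invoke $\varepsilon$-regularity for almost-minimizers to obtain the graphical $C^{1,\alpha}$-convergence over $\partial\Sigma_0$. One minor remark: the appeal to $2\leq n\leq 7$ to rule out singularities is unnecessary (and not assumed in this part of the paper), since the $E_k$ are $L^1$-close to the smooth $\Sigma_0$ and hence $\varepsilon$-regularity already forces $\partial E_k$ to be smooth in a uniform neighborhood, regardless of dimension.
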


\begin{proof}
	For almost-minimizers, convergence in the $\mathrm{BV}$ sense implies convergence in the Hausdorff sense. Smooth convergence then follows from $\varepsilon$-regularity for almost-minimizers. These are standard facts about almost-minimizers, see, e.g. \cite[Propositions 2.1-2.2]{CiLe}.
\end{proof}

\section{Boundedness of mean curvature for isoperimetric regions} 

In this appendix we recall the uniform boundedness of the mean curvature of isoperimetric regions whose volume is not very small (or close to $|M|_{g})$. 
\begin{lemma}[\cite{MoRo,Ch:CMCiso}]\label{lem:bdHiso}
For $2\leq n\leq 7$, fix $\delta >0$ and $(M^{n},g)$ a closed Riemannian manifold with $C^{3}$-metric. There is $C=C(M,g,\delta) < \infty$ so that if $\Omega\in \cA^{g}_{V}$ is an isoperimetric region with $V = |\Omega|_{g} \in (\delta, |M|_{g}-\delta)$, then the mean curvature of $\partial\Omega$ satisfies $|H| \leq C$. 
\end{lemma}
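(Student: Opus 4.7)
The plan is to use the Lagrange multiplier characterization: since $\Omega$ is isoperimetric with volume $V$, the first variation of perimeter under a volume constraint forces $\partial\Omega$ to have constant (generalized) mean curvature $H$, and for any smooth vector field $X$ on $M$ one has the identity
\[
H \int_\Omega \mathrm{div}_g X \, d\mathrm{vol}_g = \int_{\partial^*\Omega} \mathrm{div}_{\partial\Omega}(X) \, d\sigma_g.
\]
The right hand side is bounded by $C \|X\|_{C^1} \cdot \mathcal{P}^g(\Omega)$, and $\mathcal{P}^g(\Omega) = \mathcal{I}^g(V)$. So the strategy is: (a) bound $\mathcal{I}^g$ uniformly from above on $(0, |M|_g)$; (b) for each $\Omega$ in our class, produce a vector field $X$ from a fixed finite family with $\|X\|_{C^1}$ bounded and $|\int_\Omega \mathrm{div}_g X|$ bounded below.

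For (a), I would foliate $(M,g)$ by the sublevel sets of a fixed Morse function $f$; these provide Caccioppoli sets of every volume in $(0,|M|_g)$ with perimeters uniformly bounded by some $I_0 = I_0(M,g)$, giving $\mathcal{I}^g \leq I_0$. This is already noted in the proof of \cref{l:finite_components} in the paper.

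For (b), which is the main obstacle, I would use a compactness and pigeonhole argument built on Laplace eigenfunctions. Let $\{\psi_i\}_{i \geq 1}$ be an $L^2$-orthonormal basis of Laplace eigenfunctions on $(M,g)$ with nonzero eigenvalues $\lambda_i$, and set $X_i := \nabla \psi_i$, so that $\int_\Omega \mathrm{div}_g X_i = -\lambda_i \int_\Omega \psi_i$. I claim there exist $N = N(M,g,\delta)$ and $c = c(M,g,\delta) > 0$ such that for any Caccioppoli set $\Omega$ with $|\Omega|_g, |M\setminus\Omega|_g \geq \delta$,
\[
\max_{1 \leq i \leq N} \left| \int_\Omega \psi_i \, d\mathrm{vol}_g \right| \geq c.
\]
If not, there would be a sequence $\Omega_k$ satisfying the volume constraint with $|\int_{\Omega_k} \psi_i| \to 0$ for every fixed $i$. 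By BV-compactness (the isoperimetric $\Omega_k$ have uniformly bounded perimeter $\leq I_0$), up to a subsequence $\chi_{\Omega_k} \to \chi_{\Omega_\infty}$ in $L^1$, with $\Omega_\infty$ still respecting the volume bounds. Passing to the limit gives $\int_{\Omega_\infty} \psi_i = 0$ for all $i \geq 1$, which forces $\chi_{\Omega_\infty}$ to be constant a.e., contradicting $|\Omega_\infty|_g, |M\setminus\Omega_\infty|_g \geq \delta$.

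Combining everything, for any such $\Omega$ one obtains some $i \leq N$ with $|\int_\Omega \mathrm{div}_g X_i| \geq c\lambda_i$, and since $\|X_i\|_{C^1} \leq C_1(M,g,N)$, plugging into the identity of the first paragraph yields
\[
|H| \leq \frac{C_1 I_0}{c \, \min_{1 \leq i \leq N} \lambda_i} =: C(M,g,\delta),
\]
as desired. The hard part is the compactness argument in step (b); everything else is a direct consequence of the first variation formula and the trivial upper bound on the isoperimetric profile.
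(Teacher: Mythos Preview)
Your approach is correct and takes a genuinely different route from the paper. One small repair: the claim in step (b) is stated for \emph{all} Caccioppoli sets with volume in $[\delta,|M|_g-\delta]$, but that version is false---on $\mathbb{S}^1$, the union of $k$ equally spaced arcs of total length $\pi$ integrates to zero against every fixed eigenfunction once $k$ is large enough. Your compactness step needs a uniform perimeter bound to get strong $L^1$ convergence (so that the limit is again a characteristic function rather than some $f$ with $0<f<1$), and you do invoke exactly this bound parenthetically. So simply state the claim for isoperimetric regions, or more generally for sets with $\mathcal P^g\leq I_0$; then the argument is complete.

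The paper's proof is quite different: it argues by contradiction and blow-up. Assuming $|H_j|\to\infty$, it rescales $(M,g)$ around a point of $\partial\Omega_j$ by $\lambda_j=|H_j|$, extracts a locally isoperimetric limit $\tilde\Omega\subset\R^n$ with constant mean curvature $\pm 1$, and then uses the CMC stability inequality (with $|A|^2\geq 1/n$) and a cutoff to force $\partial\tilde\Omega$ to have finite area outside a compact set, contradicting its noncompactness; the compact case is ruled out separately because it would make $\mathcal P^g(\Omega_j)\to\infty$. Your argument is shorter and more elementary---no blow-up, no stability analysis, no limit in $\R^n$---and in fact does not use the restriction $n\leq 7$ in any essential way, since the first-variation identity and the divergence theorem hold across the low-dimensional singular set of $\partial\Omega$. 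The paper's route, by contrast, gives a more geometric picture of what goes wrong and exercises the rescaling-plus-stability technique that is thematically close to other arguments in the area.
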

\begin{proof}
Fix $(M,g)$ and $\delta>0$ and assume for contradiction that there are isoperimetric regions $\Omega_{j}\subset (M,g)$ with $|\Omega_{j}|_{g} \in (\delta,|M|_{g}-\delta)$ with mean curvature $H_{j}$ satisfying $\lambda_{j} : = |H_{j}| \to \infty$. 

Choosing $x_{j} \in \partial \Omega_{j}$, we can rescale by $\lambda_{j}$ around $x_{j}$ to find isoperimetric regions $\tilde\Omega_{j}$ in $(M,\tilde g_{j},x_{j})$ which converges in $C^{3}_{\textrm{loc}}$ to the flat metric on $\R^{n}$. Passing to a subsequence, $\tilde\Omega_{j}$ converge in the local Hausdorff sense to $\tilde\Omega$ a locally isoperimetric region in $\R^{n}$; moreover $\partial\tilde\Omega_{j}$ converge in $C^{2,\alpha}_{\textrm{loc}}$ to $\partial\tilde \Omega$. Hence, $\partial\tilde\Omega$ has constant mean curvature $\pm1$. On the other hand, $\partial\tilde \Omega$ is stable, in the sense that 
\[
\int_{\partial\tilde\Omega} |A|^{2} \varphi^{2} d\cH^{n-1} \leq \int_{\partial\tilde\Omega} |\nabla \varphi|^{2} d\cH^{n-1} 
\]
for any $\varphi \in C^{1}_{c}(\partial\Omega)$ with $\int_{\partial\tilde\Omega} \varphi d\cH^{n-1} = 0$. Because $|H| = 1$, we find $|A|^{2}\geq \frac 1 n$, so 
\[
\int_{\partial\tilde\Omega}  \varphi^{2} d\cH^{n-1} \leq n \int_{\partial\tilde\Omega} |\nabla \varphi|^{2} d\cH^{n-1} 
\]
for any $\varphi \in C^{1}_{c}(\partial\Omega)$ with $\int_{\partial\tilde\Omega} \varphi d\cH^{n-1} = 0$. 

Suppose that $\partial\tilde\Omega$ were compact for all choices of $x_{j} \in\partial\Omega_{j}$. Then, $\Omega_{j}$ would be close to a union of an increasing number of regions close to coordinate spheres. Using this, we would conclude that $\cP_{g}(\Omega_{j}) \to \infty$, a contradiction. As such, we will assume that $\partial\tilde\Omega$ is non-compact.

Standard arguments (cf.\ \cite{FiC}) imply that there is $R>0$ sufficiently large so that  
\[
\int_{\partial\tilde\Omega}  \varphi^{2} d\cH^{n-1} \leq n \int_{\partial\tilde\Omega} |\nabla \varphi|^{2} d\cH^{n-1} 
\]
holds for any $\varphi \in C^{1}_{c}(\partial\tilde\Omega\setminus B_{R})$ (i.e., $\partial\tilde\Omega$ is strongly stable outside of a compact set). 

Taking $\varphi = \psi^{\frac{n-1}{2}}$ for $\psi \in C^{1}_{c}(\partial\tilde\Omega\setminus B_{R})$ and using H\"older's inequality, we find
\[
\int_{\partial\tilde\Omega}  \psi^{n-1} d\cH^{n-1} \leq C \int_{\partial\tilde\Omega} |\nabla \psi|^{n-1} d\cH^{n-1} .
\]
Choose $\psi$ an ambient radial function that is $0$ for $|x| < R$ increases to $1$ for $|x| \in [R+1,\rho]$ and then cuts off to $0$ for $|x| > 2\rho$. We can arrange that $|\nabla \psi| \leq C \rho^{-1}$. Thus, we find that
\[
\cH^{n-1}(\partial\tilde\Omega \cap (B_{\rho} \setminus B_{R})) \leq C (1 +  \rho^{1-n} \cH^{n-1}(\partial\tilde\Omega \cap B_{2\rho}))
\]
Letting $\rho\to\infty$, we deduce a contradiction if we can show that $\cH^{n-1}(\partial\tilde\Omega \setminus B_{R}) = \infty$ and $\cH^{n-1}(\partial\tilde\Omega\cap B_{\rho}) \leq C \rho^{n-1}$. The first fact follows from the monotonicity formula (since $|H|=1$) applied to small balls. The second follows since $\tilde\Omega$ is locally isoperimetric in the sense that $\tilde\Omega'$ with $\tilde\Omega\Delta \tilde \Omega' \Subset B_{R}$ and $|\tilde\Omega \cap B_{R}| = |\tilde\Omega' \cap B_{R}|$ has $\cP(\partial\tilde\Omega' ; B_{R}) \geq \cP(\partial\tilde\Omega ; B_{R})$, allowing us to compare $\tilde\Omega$ to $(\tilde \Omega \setminus B_{\rho}) \cup B_{r(\rho)}$, where $r(\rho) \leq \rho$ is chosen to preserve the enclosed volume. This is a contradiction, completing the proof. 
\end{proof}

\bibliographystyle{plain}
\bibliography{references-Cal}

\end{document}